\theoremstyle{plain}
\newtheorem{theorem}{Theorem}[section]
\newtheorem{lemma}[theorem]{Lemma}
\newtheorem{proposition}[theorem]{Proposition}
\newtheorem{corollary}[theorem]{Corollary}
\theoremstyle{definition}
\newtheorem{remark}[theorem]{Remark}
\newtheorem{definition}[theorem]{Definition}
\DeclareMathOperator{\Div}{div}
\newcommand{\R}{\mathbb{R}}
\newcommand{\Z}{\mathbb{Z}}
\newcommand{\N}{\mathbb{N}}
\newcommand{\g}{\gamma}
\newcommand{\comp}[1]{{#1}^\complement}
\DeclareMathOperator{\dist}{dist}
\newcommand{\e}{\varepsilon}
\newcommand{\ud}{\,\mathrm{d}}
\newcommand{\sd}{\,\mathrm{sd}}
\newcommand{\ez}{\e\Z^N}
\numberwithin{equation}{section}
\title[Discrete diffusion/redistancing  for the mean curvature flow]{Elementary discrete diffusion/redistancing schemes for the mean curvature flow}
\author[A. Chambolle]{Antonin Chambolle}
\address[Antonin Chambolle]{CEREMADE, CNRS, Universit\'e Paris-Dauphine, PSL, and Mokaplan, INRIA Paris, France.}
\email[A. Chambolle]{antonin.chambolle@ceremade.dauphine.fr}
\author[D. De Gennaro]{Daniele De Gennaro}
\address[Daniele De Gennaro]{Department of Decision Sciences and BIDSA, Bocconi University, Milano, Italy}
\email[D. De Gennaro]{daniele.degennaro@unibocconi.it}
\author[M. Morini]{Massimiliano Morini}
\address[Massimiliano Morini]{Dip.~di Matematica, Univ.~Parma, Italy}
\email[M. Morini]{massimiliano.morini@unipr.it}
\date{}
\begin{document}

\begin{abstract}
   We consider a fully discrete and explicit scheme for the mean curvature
    flow of boundaries, based on an elementary diffusion step and a precise redistancing operation. We give an elementary convergence proof for
    the scheme under the standard CFL condition $h\sim\e^2$, where $h$ is the
    time discretization step and $\e$ the space step. We discuss extensions
    to more general convolution/redistancing schemes.
\end{abstract}
\maketitle
\section{Introduction}
We analyze a basic \textit{explicit} fully discrete (in space and time) scheme for computing the mean curvature
flows of curves and surfaces which are boundaries of sets. This approach belongs to
the family of convolution-generated motions, yet it is much more elementary. It
consists in alternating an explicit approximation of the heat equation, with time-step $h>0$, with a redistancing
operation on rescaled regular grids $\ez$, for $\e>0$.  

It is heuristically well known  that this type of approach is effective, and variants of the corresponding algorithm have been proposed many times in the literature as a natural, more accurate extension of the celebrated Merriman-Bence-Osher (MBO in short) scheme~\cite{BMO92}. See for instance \cite{KimNot}, where a discrete, finite-elements based approach is implemented using  a brute-force redistancing (as in our work) and  experimented on several interface evolution problems, and the first Algorithm in~\cite{EsedogluRuuthTsai10}. Rigorous convergence results and error estimates for this algorithm can be found in~\cite{IshKim16,IshIzu18}. We also refer to~\cite{EsedogluRuuthTsai08,Metivetetal2021}, where similar techniques are primarily used to simulate higher-order geometric flows, and to~\cite{ElseyEsedogluSmereka11}, which implements a method introduced and analyzed in~\cite[Sec.~5.3]{EsedogluRuuthTsai10} for the simulation of large-scale multiple-grain evolution in materials.

Indeed, as clearly pointed out in \cite{EsedogluRuuthTsai10}, despite its computational efficiency, the MBO algorithm has some well-known limitations. In particular, if the spatial discretization is not refined alongside the time step, pinning phenomena may appear. More generally,  significant discretization errors may arise in the computed evolution. To address these issues, it is crucial to use a discretization method that allows for a more precise representation of the interface position within the grid. 
Unlike the characteristic functions used in the original MBO scheme, signed distance functions can be represented more accurately on uniform grids, even at subgrid resolutions, thanks to their Lipschitz continuity.

The algorithm we propose aims to address these issues by combining a diffusion step with a redistancing one, as proposed in~\cite{KimNot,EsedogluRuuthTsai10} ---  in a fully discrete formulation. In the first part of the paper, instead of considering the MBO-type diffusion step, based on the discrete heat kernel, we  directly consider the Euler explicit scheme for the discrete heat equation. The redistancing step also changes, as we now use the redistancing operator we recently introduced in~\cite{CDGM-crystal}.
In our first main result, Theorem \ref{th:mainexplicit}, we show the convergence of such a fully discrete scheme  towards a generalized solution to the mean curvature flow. 
We then generalize this result in two directions. From Section  \ref{sec:kernels} on, we modify the diffusion step to allow for general kernels $K^\e$ depending on the space discretization parameter $\e$ and satisfying natural assumptions. Precisely, the kernels $K^\e$ are required to be  non-negative, symmetric and of mass one. Moreover, their second moment (which is assumed to be vanishing as $\varepsilon \to 0$) intrinsically defines the time step $h$ as a function of the  space step $\varepsilon$ (see \eqref{Kh}). Finally, the kernels are assumed to suitably approximate the Laplacian. Such an approximation property is imposed  in terms of the pointwise convergence of the discrete Fourier transform of the kernels to  the symbol of the Laplacian (see \eqref{eq:approxsymbol}),  which  allows for straightforward verification in many practical examples, as detailed in Section \ref{sec:applic}. Under these assumptions, we can show that our diffusion/redistancing scheme converges, see Theorem \ref{th:mainimplicit}.

We then study a modification of the discrete redistancing operator we defined in \cite{CDGM-crystal} to account for a nonlinearity $\g$ in the scheme. The idea is that, in some applications, it may be more interesting to consider evolving profiles of the form $\g(\sd)$ rather than the signed distance function itself, for instance 
in the context of  Allen-Cahn-based approximation schemes for the mean curvature flow \cite{Alberti96, DeepLearningMCF}. We show how to design such a scheme in the instance where $\g$ is the Modica-Mortola
(or Cahn-Hilliard) optimal profile. We refer to Section \ref{sec:nonlinear} for a more detailed explanation of the heuristic.  Considering the general kernels $K^\e$ previously introduced for the diffusion step, we are able to show in  Theorem \ref{th:mainnonlin} that  the proposed  
diffusion/nonlinear redistancing scheme converges. This seems to give  a partial explanation for the good results produced by fully learned approaches for the mean curvature flow, introduced in~\cite{DeepLearningMCF}.

We are not aware of any previous rigorous convergence result for a fully discrete  similar
scheme towards  mean curvature flows, as the steps go to zero,
with precise bounds on the possible ratio between the time and space steps, even in the basic instance of the very simple Euler explicit scheme. 
Most of the previously mentioned references
consider a time-discrete setting, but continuous in
space, and then assume that if the space step is small enough, one will approximate the continuous process.
The point is that analyzing the convergence of a fully discrete
approximation requires a quite precise control
of the redistancing operation, which is in general unavailable.
There are, however, some works addressing this issue \cite{MisiastYip16, LauxLelmi23} for MBO-type algorithms (see also \cite{LauUll} for a similar study in the level-set framework). In the first reference, a study of convergence of the fully discrete MBO scheme is presented in dimension 2, showing  convergence  towards viscosity solutions to the mean curvature motion  under the assumption that $\e=o(h)$, while pinning is present whenever $h=o(\e)$. The second reference studies the instance of general graphs, supported on a fixed submanifold $M$. The authors consider MBO-type algorithms where the convolution kernels approximate
the heat kernel corresponding to a weighted Laplace-Beltrami operator on  $M$. The convergence of this scheme towards  mean curvature flows on the manifold $M$ is proved under some technical assumptions on the kernels, and for $\e = o(h^{\frac 32})$. 
Last, in the aforementioned work~\cite{KimNot}, a fully discrete
finite-element approximation was introduced
and experimentally justified for a few geometric flows. 
It would be interesting to check whether
a theoretical analysis similar to ours could  be possible for that approach, at least for basic flows
which enjoy a comparison principle. However, this is
not straightforward, in particular since the discretization
in~\cite{KimNot} is not translational invariant.

In our case, by introducing a particular form of (brute force)
redistancing (actually,
already introduced in~\cite{CDGM-crystal} for the study of discrete crystalline
curvature flows), defined by means of inf and sup-convolution
formulas,
we address this issue in a simple way and establish convergence
of various (similar) approaches, with {vanishing spatial
and time discretization steps determined by} the
convergence to the heat equation of the convolution step.
{Our method relies strongly on the comparison principle and on the standard ``stability-monotonicity-consistency'' framework for approximation schemes, which ensures convergence in similar settings; see, for instance, \cite{RouyTourin,BarlesBronsardSouganidis1992,BarSonSou}. In our case, monotonicity is elementary. Moreover, since the redistancing step based on inf/sup-convolutions is essentially exact, consistency follows in an almost straightforward way (using Lemma~\ref{lem:defsd}). As a consequence,  showing  stability is the only step that requires additional (but not particularly refined) bounds, which need not be sharp in the present context. On the other hand, the approach
does not yield
error estimates, which would require a much more precise asymptotic analysis of the flow near the   boundaries, at least in the smooth case.}

Our definition of a redistancing operator is computationally more expensive than
standard PDE based approach computed using fast-marching type
technique~\cite{RouyTourin,Set,OshSet,ElseyEsedoglu14,Saye14}, yet whose
definition make less clear the actual error estimates
and regularity properties of the distance that they compute  (see Section \ref{sec:strips} for a practical method to reduce the computational cost). Our redistancing operation is designed to compute a ``true distance'' on lattice points. In this sense, it differs from the standard graph distance, which measures distances along the edges of a prescribed graph. Equivalently, our construction can be interpreted as a graph distance on a complete graph. For this reason, one cannot expect standard efficient methods such as fast--marching or Hamilton--Jacobi solvers to provide a significant computational advantage over our  brute-force approach. 
As we show in \cite[Appendix~{B}]{CDGM-crystal}, in certain cases (for instance when the weights are rational, see \cite[Corollary~{B.2}]{CDGM-crystal}), the problem can be reduced to a finite-range interaction. In such situations, dynamic programming techniques may be employed to improve computational efficiency (see, e.g., \cite{Set,Tsitsiklis}). 
This problem is also closely related to the framework of Lipschitz learning on graphs \cite{RoiBun, CalderEttehad2022}. At its core, it consists in extending a function prescribed on a subset (here the zero sublevel set) to a $1$-Lipschitz function on the whole space in a maximal way. We refer to Section~\ref{sec:base_algo} for further details.

In a previous contribution~\cite{CDGM-crystal}, we were studying an implicit
approach for the crystalline mean curvature flow and we could show that, in some situation,
one could completely uncouple the time and space steps and
still get consistency of the scheme. This is not the case here,
as any redistancing still produces some (spatial) error which can
accumulate (in time) as the steps go to zero, if the time-step
is not always larger than the spatial error of the redistancing, that is  
$h\gtrsim \e^2$.
We show however that this is the only limitation.

The paper is structured as follows. 
We first study  in Sections~{\ref{sec:base_algo}-\ref{sec:conv_expl}} the discrete   
diffusion/redistancing scheme
built upon the standard discretization of the Laplace operator on a grid.
Then, in Section~\ref{sec:kernels} we generalize the convergence result to rather general symmetric,
non-negative convolution kernels.
In Section~\ref{sec:nonlinear} we introduce and study the instance of nonlinear redistancing operators.
We show some examples of kernels fitting in our framework (as the heat kernel, or the kernel associated to the implicit Euler's scheme) in Section~\ref{sec:applic} and some numerical experiments in Section~\ref{sec:numerics}. 
In Section~\ref{sec:concluding}, we provide some concluding remarks.

\section{A simple algorithm}\label{sec:base_algo}
\subsection{Laplacian operator and redistancing on a regular discrete grid.}

\subsubsection*{Laplacian}
The first scheme we present is based on the resolution of an \textit{explicit} scheme for the
heat flow, based on a standard discretization of the Laplacian. We will see
in Section~\ref{sec:kernels} how to generalize to other convolution
or averaging type operators.

Given $v=(v_i)_{i\in\ez}$ a real-valued function on the discrete grid $\ez$, $N\ge 1$,
we introduce the discrete Laplace operator:
\[
  ( \Delta_\e v)_i : = \frac{1}{\e^2}\sum_{n=1}^N (v_{i+\e e_n} - 2v_i + v_{i-\e e_n})
  = \frac{1}{\e^2}\sum_{n=1}^N (v_{i+\e e_n} + v_{i-\e e_n}) - \frac{2N}{\e^2}v_i.
\]
It is well known, and straightforward to show that if $\eta$ is a smooth ($C^2$) function
defined near a point $x\in\R^N$ and  {$\eta^\e_i:=\eta( i)$ for $i\in\ez$}, then $(\Delta_\e \eta^\e)_{\e i}\to \Delta \eta(x)$
as $\e\to 0$, whenever $\e i\to x$.
\subsubsection*{Redistancing}
As in~\cite{EsedogluRuuthTsai10}, the scheme will alternate a step of the discrete heat-flow and a redistancing operation. The redistancing is a general and important
issue in the approximation of geometric flows, and there are many ways to
address it, usually based on the resolution of a Hamilton-Jacobi equation,
either static or evolutionary~\cite{RouyTourin,HamamukiNtovoris16}.
However, in general, the method is studied in the continuous
setting and one assumes that the spatial discretization is
fine enough so that the scheme which is used approximates
well enough the continuum limit.

We analyze here the global convergence of such schemes as the
space and time steps go to zero. In that case, most approximate
redistancing schemes  {do} not seem robust enough to yield 
consistency of the evolution with the continuous limit.
In some sense, we need to compute an ``exact'' discrete
distance to the evolving set.
This is why we propose, as in our previous work~\cite{CDGM-crystal}, an inf/sup-convolution
based approach.

In practice, our method takes a $1$-Lipschitz\footnote{ 
Note that the definition itself does not require $u$ to be 1-Lipschitz. However, without this assumption, Lemma~\ref{lem:defsd}, which is essential to our proof strategy, fails to hold.} function over $\ez$ and returns a proxy for
the distance to the zero level set, which takes into account the values of the initial function and  
keeps a (more or less good) sub-pixel accuracy.
Assuming that $(u_i)_{i\in \ez}$ is $1$-Lipschitz:
\[
  |u_i-u_j|\le |i-j|\quad\forall i,j\in\ez,
\]
 we define the redistancing functions $\sd^\pm[u]$ by letting for $i\in\ez$:
\begin{equation}\label{eq:sdp}
  \begin{cases}
    \ud^+[u]_i := \inf_{j:u_j<0}  {(} u_j + |j-i| {)}\,,\\
    \sd^+[u]_i := \sup_{j:u_j\ge 0}  {(} \ud^+_j - |j-i|  {)}
  \end{cases}
\end{equation}
and
\begin{equation}\label{eq:sdm}
  \begin{cases}
    \ud^-[u]_i := \sup_{j:u_j>0}  {(}u_j - |j-i| {)}\,,\\
    \sd^-[u]_i := \inf_{j:u_j\le 0}  {(} \ud^-_j + |j-i| {)}\,.
  \end{cases}
\end{equation}
 {We use  the convention that $\inf \emptyset=+\infty, \sup \emptyset=-\infty$.}
When not ambiguous, we drop the dependence ``$[u]$'' in the notation.

The function $\sd^+$, $\sd^-$ can be seen as discrete ``signed distance functions''
to respectively the set $\{i:u_i<0\}$ and $\{i:u_i\le 0\}$.
In some sense, $\sd^+$, $\sd^-$ are respectively the largest
and smallest distance functions which we may use in our
scheme, and proving convergence
for these choices will yield convergence for any other choice 
which lies in between. A good choice
for a more precise distance is, for instance, to consider the average $(\sd^++\sd^-)/2$.  {The first elementary properties of the redistancing are shown in the following lemma, compare with  \cite[Section 4]{CDGM-crystal}.}

\begin{lemma}\label{lem:defsd}
 {Assume still that $(u_i)_{i\in\ez}$ is $1$-Lipschitz. Then}
  the distance function $\sd^+$ may also be defined as follows:
  \begin{enumerate} 
  \item $\ud^+$ is the largest $1$-Lipschitz function such that $\ud^+_i\le u_i$ when
    $u_i<0$, and in particular $\ud^+_i=u_i$ if $u_i<0$ and $\ud^+_i\ge u_i$ if $u_i\ge 0$,
  \item $\sd^+$ is the smallest $1$-Lipschitz function such that $\sd^+_i\ge \ud^+_i$
    when $u_i\ge 0$, and in particular $\sd^+_i=\ud^+_i\ge u_i$ if $u_i\ge 0$ and $\sd^+_i\le \ud^+_i=u_i$
    if $u_i<0$.
  \end{enumerate}
  We also deduce that $\sd_i^+\ge 0\Leftrightarrow u_i\ge 0$ and
  $\sd_i^+< 0 \Leftrightarrow u_i<0$.
\end{lemma}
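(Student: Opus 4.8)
The plan is to recognize $\ud^+$ and $\sd^+$ as, respectively, an inf- and a sup-convolution of $u$ with the Euclidean cone $i\mapsto|i|$, and to use the two elementary facts that a pointwise-finite infimum (resp.\ supremum) of $1$-Lipschitz functions is $1$-Lipschitz, and that such envelopes are extremal by construction. Throughout one only invokes the $1$-Lipschitz bound $u_i-u_j\le|i-j|$ on $u$, and then the analogous bound on $\ud^+$. We may assume $\{i:u_i<0\}$ and $\{i:u_i\ge 0\}$ are both nonempty, which is the only case of interest (otherwise the formulas and the statement are vacuous).

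First, consider $\ud^+$. For each fixed $j$ the map $i\mapsto u_j+|j-i|$ is $1$-Lipschitz, and since $u$ is $1$-Lipschitz we have $u_j+|j-i|\ge u_i$ for all $i,j$. Hence the infimum defining $\ud^+_i$ is finite (take any $j$ with $u_j<0$ for an upper bound, and use $\ge u_i$ for a lower bound) and satisfies $\ud^+_i\ge u_i$ for every $i$; consequently $\ud^+$ is $1$-Lipschitz. Taking $j=i$ in the infimum when $u_i<0$ gives $\ud^+_i\le u_i$, so $\ud^+_i=u_i$ there, while for $u_i\ge 0$ the bound $\ud^+_i\ge u_i\ge 0$ already gives the ``in particular'' part of (1). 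For maximality, if $w$ is $1$-Lipschitz with $w_i\le u_i$ whenever $u_i<0$, then for any such $j$ and any $i$, $w_i\le w_j+|j-i|\le u_j+|j-i|$; taking the infimum over $j$ yields $w\le\ud^+$.

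Next, consider $\sd^+$, arguing in the same way but now using that $\ud^+$ is $1$-Lipschitz. For fixed $j$, $i\mapsto\ud^+_j-|j-i|$ is $1$-Lipschitz, and $\ud^+_j-|j-i|\le\ud^+_i$ by the Lipschitz bound on $\ud^+$; hence the supremum defining $\sd^+_i$ is finite, $\sd^+$ is $1$-Lipschitz, and $\sd^+_i\le\ud^+_i$ for every $i$. Taking $j=i$ when $u_i\ge 0$ gives $\sd^+_i\ge\ud^+_i$, so $\sd^+_i=\ud^+_i\,(\ge u_i\ge 0)$ there; and when $u_i<0$ we combine $\sd^+_i\le\ud^+_i$ with $\ud^+_i=u_i<0$ to get $\sd^+_i\le u_i<0$. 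Minimality is dual to the previous step: if $w$ is $1$-Lipschitz with $w_i\ge\ud^+_i$ whenever $u_i\ge 0$, then $w_i\ge w_j-|j-i|\ge\ud^+_j-|j-i|$ for such $j$, so $w\ge\sd^+$.

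Finally the equivalences are immediate: $u_i\ge 0\Rightarrow\sd^+_i=\ud^+_i\ge 0$ and $u_i<0\Rightarrow\sd^+_i<0$, and these two implications together give $\sd^+_i\ge 0\Leftrightarrow u_i\ge 0$ together with its negation $\sd^+_i<0\Leftrightarrow u_i<0$. There is no real obstacle in this argument; the only points deserving care are performing the steps in the right order --- establishing the $1$-Lipschitz continuity of $\ud^+$ before estimating $\sd^+$ --- and keeping track of the (mild) non-degeneracy needed for the inf/sup formulas to define real-valued functions.
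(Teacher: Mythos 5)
Your proof is correct and follows essentially the same route as the paper's: $1$-Lipschitz continuity of $\ud^+$ and $\sd^+$ as inf/sup-convolutions with the cone $|\cdot|$, the choice $j=i$ for the pointwise comparisons with $u$, and the triangle-inequality argument $w_i\le w_j+|j-i|\le u_j+|j-i|$ (and its dual) for extremality. The only cosmetic difference is that you invoke the general fact that a pointwise-finite infimum of $1$-Lipschitz functions is $1$-Lipschitz, whereas the paper unwinds this via an $\e$-almost-minimizer; the two are interchangeable.
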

\noindent A similar, symmetric statement holds for $\sd^-$, see \cite{CDGM-crystal}.  {We note that the definitions \eqref{eq:sdp}, \eqref{eq:sdm} are coherent and Lemma \ref{lem:defsd} holds even if $\{u<0\}=\emptyset$ (and thus $\sd^\pm[u]=+\infty$) or $\{u>0\}=\emptyset$ (and $\sd^\pm[u]= -\infty$).}

\begin{proof}
  First, clearly $\ud^+$ is $1$-Lipschitz as the inf-convolution of a function
  with $|\cdot|$:  {indeed,} given $i,i'\in\ez$ and  {$\sigma>0$}, if $j$ is such that
  $u_j<0$ and $\ud^+_i\ge u_j+|j-i|- {\sigma}$,
  then
  \[
     {\ud^+_{i'}-\ud^+_i} \le  u_j+|j-i'| - (u_j+|j-i|- {\sigma}) = |j-i'|-|j-i|+ {\sigma}\le |i-i'|+ {\sigma}
  \]
  by the triangle inequality.   {Letting $\sigma\to 0$ shows that $\ud^+$ is $1$-Lipschitz.} Also,
  $\ud^+_i\le u_i$ if $u_i<0$, choosing $j=i$ in the definition (actually,
  this is an equality, since if $j\neq i$ one has $u_i \le u_j + |j-i|$).

  Then, if $(v_i)_{i\in\ez}$ is $1$-Lipschitz and below $u$ where $u$ is negative, then
  for any $j$ with $u_j<0$,
  \[
    v_i \le v_j+|j-i| \le u_j+|j-i|,
  \]
  so that $v_i\le \ud^+_i$.  
  The proof of the second point of the claim is identical.
\end{proof}

Let us give some more details concerning the previous statement about ``signed distance function''. If $d_i = \min_{j:u_j<0} |i-j|$ is the distance to $\{j:u_j<0\}$,
then by definition $d \ge \ud^+$ and it is  {1}-Lipschitz, hence $d \ge \sd^+$.
On the other hand, let $j$ with $u_j<0$, $i$ with $d_i>0$ (hence $u_i\ge 0$),
and consider $J=\{j'\in \ez: \dist(j',[i,j])\le 2\sqrt{N}\e\}$,  {where $[i,j]\subset \R$ denotes the line segment between $i,j$}.
For $j'\in J$, $|i-j'| + |j'-j|\le |i-j|+4\sqrt{N}\e$ (considering  {an intermediate point $\tilde j'\in [i,j]\cap (j'+[0,2\e]^N)$}).  
If $u_{j'}<0$, one has $d_i \le |i-j'| \le |i-j|-|j-j'|+4\sqrt{N}\e$. We may take
for $j'$ the closest point in $\{j':u_{j'}<0\}\cap J$, so that $j'$ has a neighbor $j''$
(at distance $\e$) in $J$ with $u_{j''}\ge 0$, and $u_{j'}\ge -\e$.
Using that $|j-j'|\ge u_{j'}-u_{j}$, we find that
\begin{multline*}
  d_i \le |i-j'| \le |i-j|-|j-j'|+4\sqrt{N}\e
  \le  |i-j| + u_j - u_{j'}+4\sqrt{N}\e
  \\ \le u_j+|i-j| + (4\sqrt{N}+1)\e.
\end{multline*}
Taking the infimum over all possible $j$, we find that $d_i\le \ud^+_i+C\e = \sd^+_i+C\e$
since $u_i\ge 0$. We deduce
\begin{equation}\label{eq:compdist}
  \sd^+_i \le \dist(i,\{j:u_j<0\}) \le \sd^+_i+C\e,
\end{equation}
for $C=4\sqrt{N}+1$ and all $i$ with $u_i\ge 0$. Similarly, if $u_i<0$ then:
\begin{equation}\label{eq:compdistm}
  -\sd^+_i  \le \dist(i,\{j:u_j\ge 0\}) \le -\sd^+_i+C\e.
\end{equation}

\begin{proposition}[Comparison]
  Given $1$-Lipschitz functions  $(u_i)_{i\in\ez}$, $(u'_i)_{i\in \ez}$ ,
  and denoting respectively
  $\sd^\pm = \sd^\pm[u]$, $(\sd')^\pm=\sd^\pm[u']$
  the functions obtained by applying~\eqref{eq:sdp}, \eqref{eq:sdm}  to $u$, $u'$, we have:
  \begin{enumerate}
  \item $\sd^-\le \sd^+$,
  \item For any $s\ge 0$, if $u\le u'-s$ then $\sd^\pm \le (\sd')^\pm-s$.
  \end{enumerate}
\end{proposition}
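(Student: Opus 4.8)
The plan is to read both assertions directly off the inf/sup-convolution formulas~\eqref{eq:sdp}--\eqref{eq:sdm}, using Lemma~\ref{lem:defsd} and its symmetric counterpart for $\sd^-$ (see~\cite{CDGM-crystal}); the only mechanism needed is that an infimum decreases (resp.\ a supremum increases) when its index set is enlarged. The single step that requires an idea rather than bookkeeping is item~(1): one has to notice that $\sd^+$ coincides with $\ud^+$ \emph{exactly} on $\{u\ge0\}$ and $\sd^-$ with $\ud^-$ \emph{exactly} on $\{u\le0\}$, and then pair each of these identities with the \emph{opposite} ``crossed'' inequality below. Everything else, including item~(2), is a routine unwinding of the definitions.

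For item~(1) I would first establish, at \emph{every} $i\in\ez$, the two crossed bounds $\sd^-_i\le\ud^+_i$ and $\sd^+_i\ge\ud^-_i$. For the first, restrict the infimum defining $\sd^-_i$ from $\{k:u_k\le0\}$ to the smaller set $\{k:u_k<0\}$ and use $\ud^-_k\le u_k$, which holds whenever $u_k\le0$ by the symmetric version of the first item of Lemma~\ref{lem:defsd}:
\[
 \sd^-_i\ \le\ \inf_{k:u_k<0}\bigl(\ud^-_k+|k-i|\bigr)\ \le\ \inf_{k:u_k<0}\bigl(u_k+|k-i|\bigr)\ =\ \ud^+_i ;
\]
the bound $\sd^+_i\ge\ud^-_i$ is the mirror computation (restrict the supremum defining $\sd^+_i$ to $\{j:u_j>0\}$ and use $\ud^+_j\ge u_j$ there). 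Then I would split on the sign of $u_i$: if $u_i\ge0$ then $\sd^+_i=\ud^+_i$ by the second item of Lemma~\ref{lem:defsd}, so $\sd^-_i\le\ud^+_i=\sd^+_i$; if $u_i\le0$ then $\sd^-_i=\ud^-_i$ by the symmetric statement, so $\sd^-_i=\ud^-_i\le\sd^+_i$. Since every $i$ lies in at least one of the two cases, (1) follows. (If $\{u<0\}$ or $\{u>0\}$ is empty the corresponding $\ud^\pm_i$ equals $\pm\infty$ and the relevant inequality is vacuous, so these degenerate situations are harmless.)

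For item~(2) I would carry the hypothesis $u\le u'-s$ through the two convolution steps. Since $s\ge0$ we have $u\le u'$, hence $\{u'<0\}\subseteq\{u<0\}$ and $\{j:u_j\ge0\}\subseteq\{j:u'_j\ge0\}$, and likewise with the roles of the signs interchanged. Enlarging the infimum set and inserting $u\le u'-s$ gives, for every $i$,
\[
 \ud^+[u]_i=\inf_{j:u_j<0}\bigl(u_j+|j-i|\bigr)\ \le\ \inf_{j:u'_j<0}\bigl(u'_j-s+|j-i|\bigr)=\ud^+[u']_i-s,
\]
and then, shrinking the supremum set in the definition of $\sd^+[u]_i$ and using the previous display,
\[
 \sd^+[u]_i\ \le\ \sup_{j:u_j\ge0}\bigl(\ud^+[u']_j-s-|j-i|\bigr)\ \le\ \sup_{j:u'_j\ge0}\bigl(\ud^+[u']_j-|j-i|\bigr)-s=\sd^+[u']_i-s .
\]
The estimate for $\sd^-$ is obtained by the analogous argument (now the inner supremum shrinks and the outer infimum enlarges): first $\ud^-[u]\le\ud^-[u']-s$, then $\sd^-[u]\le\sd^-[u']-s$.
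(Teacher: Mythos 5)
Your proposal is correct and follows essentially the same route as the paper: the crossed bounds $\sd^-\le\ud^+$ and $\sd^+\ge\ud^-$ combined with the identities $\sd^+=\ud^+$ on $\{u\ge0\}$ and $\sd^-=\ud^-$ on $\{u\le0\}$ for item (1), and the inclusion $\{u'<0\}\subseteq\{u<0\}$, $\{u\ge0\}\subseteq\{u'\ge0\}$ together with monotonicity of inf/sup for item (2). The only cosmetic remark is that the phrase ``enlarging the infimum set'' in item (2) describes the left-hand side relative to the right-hand side (the displayed inequalities themselves are correct), and your explicit handling of the degenerate empty-set cases is a small addition the paper omits.
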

\begin{proof}
  For the first point, recall that  $\ud^- \le u$, hence   $\ud^+ \ge \ud^-$. But then for
  $i\in\ez$,
  \[
    \sd^-_i :=
    \inf_{j:u_j\le 0}  {(}  \ud^-_j + |j-i|  {)}
    \le \inf_{j:u_j< 0}  {(}\ud^+_j + |j-i|  {)}
    = \inf_{j:u_j < 0}  {(} u_j + |j-i|  {)} =  \ud^+_i,
  \]
  using that $\ud^+_j=u_j$ if $u_j<0$.
  Since $\sd^+_i=\ud^+_i$ if $u_i\ge 0$,
  we deduce that $\sd^-_i\le \sd^+_i$ when $u_i\ge 0$. Symmetrically, $ 
  \sd^+_i := \sup_{j:u_j\ge 0}  {(} \ud^+_j - |j-i|  {)} \ge  \sup_{j:u_j\ge 0}  {(} \ud^-_j - |j-i|  {)} \ge \ud^-_i$, 
  which coincides with $\sd^-_i$ whenever $u_i\le 0$.

  For the second point,
  we have that $\ud^+_i := \inf_{j:u_j<0}  {(} u_j + |j-i|  {)} \le \inf_{j:u_j<0}  {(} u'_j-s + |j-i|  {)}$,
  yet if $u'_j<0$, $u_j\le u_j'-s< -s\le 0$, hence $\{j:u'_j<0\}\subset \{j:u_j<0\}$
  and the inf is less than $\inf_{j:u'_j<0}  {(} u'_j + |j-i|-s  {)}= (\ud')^+_i-s$.
  Then, 
  $\sd^+_i := \sup_{j:u_j\ge 0}  {(} \ud^+_j - |j-i|  {)} \le\sup_{j:u_j\ge 0}  {(} (\ud')^+_j - |j-i|-s  {)} $
  and similarly, $u_j\ge 0\Rightarrow u'_j\ge s\ge 0$ and we deduce the claim.
\end{proof}

\subsection{Algorithm}
\label{sec:algo_explheat}
We consider the following algorithm.
Given an initial closed set $E^0\subset\R^N$, we set $E^{0,\e}:=\{i\in\ez: i+[0,\e)^N\cap  E^0\neq \emptyset\}$, and let $u^{0,\e}=\sd^{0,\e}$ be a $1$-Lipschitz function
on $\ez$ such that $\sd^{0,\e}_i < 0$  on $E^{0,\e}$, and $>0$ else. For all $k\in\N$ we define:
\begin{equation}\label{eq:heat}
  \frac{u_i^{k+1,\e}-\sd_i^{k,\e}}{h} = (\Delta_\e \sd^{k,\e})_i,
\end{equation}
where the time-step $h$ will be precised right after, as a function of $\e$.
We then  let
\begin{equation}\label{eq:redistance}
  \sd^{k+1,\e} = \sd^+[u^{k+1,\e}],
\end{equation}
applying~\eqref{eq:sdp} to $u=u^{k+1,\e}$ (alternatively, we could use \eqref{eq:sdm}).

We fix $\theta\in  {(0,1]}$ and let $h = \theta\frac{\e^2}{2N}$.
Then \eqref{eq:heat} becomes:
\[
  u_i^{k+1,\e} = (1-\theta) \sd_i^{k,\e} + \frac{\theta}{2N}\sum_{n=1}^N
  \left(\sd_{i+\e e_n}^{k,\e} + \sd_{i-\e e_n}^{k,\e}\right),
\]
where $(e_n)_{n=1}^N$ is the canonical basis of $\R^N$:
this guarantees  {(by induction)} that $u_i^{k+1,\e}$ is $1$-Lipschitz
as a convex combination of $1$-Lipschitz functions, and in particular Lemma~\ref{lem:defsd}
holds for all iterates. From this, we deduce that 
at points $i\in\ez$ where $u^{k+1,\e}_i\ge 0$ or equivalently $\sd^{k+1,\e}_i\ge 0$ it holds:  
\begin{equation}\label{eq:subsoldiscrete}
  \frac{\sd_i^{k+1,\e}-\sd_i^{k,\e}}{h} \ge (\Delta_\e \sd^{k,\e})_i,
\end{equation}
while where $u^{k+1,\e}_i<0 \Leftrightarrow \sd^{k+1,\e}_i< 0$,
\begin{equation}\label{eq:supersoldiscrete}
  \frac{\sd_i^{k+1,\e}-\sd_i^{k,\e}}{h} \le (\Delta_\e \sd^{k,\e})_i.
\end{equation}
We see that approximately, we have built a signed distance function
which is a supersolution of the heat equation where positive,
and a subsolution where negative, which is consistent with the
distance function to a set evolving by its mean curvature  {see \cite{Soner93}}.
Our first result is the following.
\begin{theorem}\label{th:mainexplicit}
  As $\e\to 0$, the function $(d^\e(t)_i)$, defined for $t\ge 0$ and $i\in\e\Z^N$
  by $d^\e(t)_i = \sd^{[t/h],\e}_i$ (where $[\,\cdot\,]$ denotes the
  integer part), converge up to subsequences,
  for almost all times and locally uniformly in space to a function
  $d(x,t)$ such that $d^+=\max\{d,0\}$ is the distance function
  to a supersolution of the (generalized)
  mean curvature flow starting from $E^0$,
  and $d^-$ is the distance function to a supersolution starting from
  $\comp{E^0}$. In particular, if the mean curvature flow $E(t)$ starting
  from $E^0$ is unique, then $d^\e(t)$ converges to the signed distance
  function to $E(t)$, up to extinction.
\end{theorem}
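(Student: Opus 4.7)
The plan is to combine the classical Barles--Souganidis half-relaxed limits strategy with a barrier argument tailored to the signed distance formulation. The key observation, already visible in the discrete inequalities~\eqref{eq:subsoldiscrete}--\eqref{eq:supersoldiscrete}, is that the limit $d$ should be a viscosity supersolution of the heat equation $\partial_t d \ge \Delta d$ in the region $\{d>0\}$ and a subsolution in $\{d<0\}$; combined with the $1$-Lipschitz signed-distance structure (so $|\nabla d|=1$ a.e.\ with $D^2 d(\nabla d,\nabla d)=0$ where smooth), this is precisely the Evans--Spruck characterisation of the super-level sets evolving as super- respectively subsolutions of the mean curvature flow.

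For compactness, every iterate $\sd^{k,\e}$ is $1$-Lipschitz in space by the construction of the redistancing, so spatial equicontinuity is automatic. For a uniform modulus of continuity in time I would construct discrete barriers from shrinking balls, whose Euclidean distance functions are exact sub/supersolutions of the heat equation away from the centre, and exploit the fact that both~\eqref{eq:heat} (a convex combination with non-negative weights, since $\theta\le 1$) and the redistancing~\eqref{eq:redistance} are monotone operators. Iterated comparison then traps each iterate between evolving ball barriers starting above/below it, and Arzel\`a--Ascoli produces a subsequential locally uniform limit $d(x,t)$. The estimates~\eqref{eq:compdist}--\eqref{eq:compdistm} further ensure that $d^+(\cdot,t)$ is genuinely the distance to $\{d(\cdot,t)\le 0\}$ in the limit. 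To pass~\eqref{eq:subsoldiscrete} to the continuum on $\{d>0\}$, I would take a smooth test $\varphi$ touching $d$ from below strictly at $(x_0,t_0)$, pick discrete near-minima $(i^\e, k^\e h)\to (x_0,t_0)$ of $\sd^{k,\e}-\varphi$, and apply~\eqref{eq:subsoldiscrete} together with the non-positivity of the discrete Laplacian of $\sd^{k^\e,\e}-\varphi$ at such minima and the pointwise consistency $\Delta_\e\varphi\to\Delta\varphi$ to deduce $\partial_t \varphi(x_0,t_0)\ge \Delta\varphi(x_0,t_0)$; the symmetric argument on $\{d<0\}$ handles $d^-$ and $\comp{E^0}$. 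The uniqueness assertion in the statement is then a direct consequence of the comparison principle for generalised mean curvature flow.

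The main obstacle I expect is not the viscosity tests themselves but the interaction between the per-step error of the redistancing and the continuum limit. Each application of $\sd^+[\cdot]$ introduces an $O(\e)$ spatial error (see~\eqref{eq:compdist}--\eqref{eq:compdistm}), and over the $\sim 1/h$ iterations required to cover an $O(1)$ time interval this error could in principle accumulate and destroy the supersolution property in the limit. The redistancing preserves signs, so~\eqref{eq:subsoldiscrete}--\eqref{eq:supersoldiscrete} hold with no extra correction; the delicate quantitative point, and what the CFL constraint $h\sim\e^2$ ensures, is that the heat step acts strongly enough on the relevant curvature scale to absorb this per-step error, and that the monotonicity of the full scheme makes the ball-barrier argument robust enough to yield the temporal modulus of continuity needed for locally uniform convergence. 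Capturing this balance precisely, rather than just heuristically, will be the technically demanding part of the proof.
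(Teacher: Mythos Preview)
Your overall strategy matches the paper's: ball barriers for time compactness, then a viscosity test-function argument passing~\eqref{eq:subsoldiscrete}--\eqref{eq:supersoldiscrete} to the limit (the paper also records an equivalent distributional version). The consistency step goes exactly as you sketch.

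The one substantive point where your intuition is off is the mechanism behind the CFL condition. You write that the redistancing introduces an $O(\e)$ error per step and that the heat step must ``absorb'' it. In fact the error that matters for the ball-barrier argument is already $O(\e^2/R)$ per step, not $O(\e)$. The paper's key technical estimate (Lemma~\ref{lem:balls}) shows precisely this: applying one full step of the scheme to $u_i=|i|-R$ yields $\sd^+[v]_i\le |i|-R+C\e^2/R$. The heat step contributes $O(\e^2/R)$ by the second-order Taylor expansion of $|\cdot|$; the redistancing contributes another $O(\e^2/R)$ because, for smooth radial data, one can choose the near-optimal $j$ in the inf-convolution within $O(\e)$ of the segment $[0,i]$, and the triangle-inequality defect $|j|+|j-i|-|i|$ is then $O(\e^2/R)$ (see~\eqref{eq:triangle}). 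Since $h=\theta\e^2/(2N)$, this per-step error reads $O(h/R)$, and over $\sim T/h$ iterations it accumulates to $O(T/R)$, which is bounded. That is what yields the one-sided time estimate~\eqref{eq:nondecreasing} and hence compactness via~\cite[Prop.~4.4]{CMP17}. The cruder $O(\e)$ bound~\eqref{eq:compdist}--\eqref{eq:compdistm} is used only afterwards, to identify the limit as a genuine distance function, not in the barrier iteration itself.

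A minor correction: the time regularity coming from~\eqref{eq:nondecreasing} is only one-sided, so you do not get a full Arzel\`a--Ascoli modulus in time; the limit exists for all but countably many times, which is why the statement reads ``for almost all times''.
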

Here, by generalized solution, we mean a solution in the viscosity
sense, as defined in~\cite{EvaSpr,BarSou,BarSonSou,Soner93}, or, equivalently,
in the distributional sense introduced in~\cite{CMP17} for nonsmooth
flows. We make this precise in the next section, before
proving Theorem~\ref{th:mainexplicit}.
\section{Convergence of the algorithm}\label{sec:conv_expl}

\subsection{Notions of generalized mean curvature flow}

We consider the classical generalized sub/superflows which
have been defined since~\cite{EvaSpr} by viscosity solutions
or barriers~\cite{BellettiniNovaga97}, yet the characterization
which is best adapted for showing consistency of our schemes
is the one involving the (signed) distance function to the interface,
introduced in~\cite[Def.~5.1]{Soner93} (which essentially is saying
that the flow is characterized by the property that the distance
to the evolving boundary should be a supersolution of the heat flow
away from the boundary), and the distributional variant
in~\cite[Def.~2.1]{CMP17} which was introduced for the study of nonsmooth
flows (but it is equivalent in the smooth case). We recall the definition
in~\cite[Def.~2.1]{CMP17}
(see also~\cite[Def.~2.2]{CMNP19}), specified to the case of the standard mean curvature flow, which is
less common than~\cite{Soner93}.
\begin{definition}\label{def:distributionalsuperflow}
  Let $E^0\subset \R^N$ be a closed set. Let $E$ be a closed
  set in $\R^N\times [0,+\infty)$ and for each $t\ge 0$ denote
  $E(t):=\{x\in\R^N:(x,t)\in E\}$. We say that $E$ is a supersolution 
  of the mean curvature flow (in short, a superflow) with initial datum $E^0$ if\begin{itemize}
  \item[(a)] $E(0)\subseteq E^0$;
  \item[(b)] for all $t\ge 0$, if $E(t)=\emptyset$ then $E(s)=\emptyset$
    for all $s>t$;
  \item[(c)] $E(s)\stackrel{\mathcal{K}}{\rightarrow}E(t)$ as $s\uparrow t$ for all $t>0$
    (Kuratowski left continuity);
  \item[(d)] setting $d(x,t)=\dist(x,E(t))$ for $(x,t)\in(\R^N\times (0,T^*))\setminus E$ where
    \[ T^*:=\inf \{ t>0: E(s)=\emptyset \text{ for } s\ge t\},\]
    then the following inequality holds in the distributional sense in $(\R^N\times (0,T^*))\setminus E$
    \begin{equation}\label{eq:supsolheat}
      \frac{\partial d}{\partial t} \ge \Delta d.
    \end{equation}
    
  \end{itemize}
  A subsolution (in short, subflow) is the complement of a supersolution, and we say that $E$ is a solution if it is a supersolution while $\mathring{E}$ is a subsolution.
\end{definition}

\begin{remark}
Note that in definition \cite[Def.~2.2]{CMNP19}, an additional requirement on $\Delta d$ is imposed, namely:  $\Delta d$ is a Radon measure in $(\R^N\times (0,T^*))\setminus E$,     and $(\Delta d)^+ \in L^\infty(\{(x,t)\in\R^N\times (0,T^*):
d(x,t)\ge \delta\})$ for every $\delta\ge 0$. However, when $d$ is the distance induced by the Euclidean norm, this condition is automatically satisfied, thanks to the following standard semiconcavity estimate. Let $d(x,t)\ge \delta>0$,  $y$ be the projection of $x$ on $\partial E(t)$ and $h$ small enough ($|h|\le \delta/2$). Then 
\begin{multline*}           
    d(x+h,t)-2d(x,t)+d(x-h,t)\le |x+h-y| - 2|x-y|+ |x-h-y|\\
    = \int_{-1}^1 (1-| {s}|)\frac1{|x-y+ {s}h|}\Big(I-\frac{(x-y+ {s}h)\otimes (x-y+ {s}h)}{|x-y+ {s}h|^2}\Big) h\cdot h \, d {s}\\
    \le      c|h|^2 \int_{-1}^1 \frac{1-| {s}|}{ {|x-y+ {s}h|}} \, d {s}
    \le \frac{c}{\delta}|h|^2,
\end{multline*}
{where we used a Taylor expansion.
From this, it}
follows that the distributional  second  spatial derivatives of $d$ are uniformly bounded above by $\frac c\delta$ in the region $\{d\ge \delta\}$.
\end{remark}

It has been proven in \cite{CMP17} (see also \cite{ChaMorNovPon19}) that generically (up to fattening) there is a unique solution starting from a set $E^0$;
in addition, it is proved in~\cite[Appendix]{CMP17} that~\eqref{eq:supsolheat} holds in the viscosity sense
and that this definition is equivalent to~\cite[Def.~5.1]{Soner93}.

We prove that the sequence of distance functions produced
by the algorithm defines, in the limit, generalized solutions
 {in  this} sense.
This will follow from (i) an estimate on the motion of a ball which yields enough
control on $d^\e$ in time to show compactness, (ii) an elementary
consistency argument. We point out that the only true difficulty for generalizing
this result is point~(i),
which is where the precision of the redistancing plays a major role.
 
\subsection{Evolution of a ball}

As said, a crucial point for proving the convergence of the method, is to control the behavior
of the algorithm when $\sd^{k,\e}_i$ represents a ball of radius $R>0$. For this,
given $\theta\in  {(0,1]}$, we let for $i\in\ez$,
\begin{equation}\label{eq:v}
    u_i= |i|-R,\quad v_i = (1-\theta) u_i + \frac{\theta}{2N} \sum_{n=1}^N (u_{i+\e e_n}+u_{i-\e e_n})   
\end{equation}
and let $d=\sd^+[v]$. Then, we show:
\begin{lemma}\label{lem:balls}
  There exists a dimensional constant $C\ge 1$ such that if $\e/R$ is small enough (depending
  only on the dimension), then for every $i\in\ez$
  \[
    d_i \le |i|-R + \frac{C}{R}\e^2 = |i|-R + \frac{2NC}{\theta R}h  
  \]
\end{lemma}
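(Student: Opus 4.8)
The plan is to estimate $d_i=\sd^+[v]_i$ directly from the inf/sup-convolution formulas in \eqref{eq:sdp}, after first understanding $v$ well. First I would compute $v_i$ explicitly in terms of $|i|$ and $R$: since $u_i=|i|-R$ and $|\cdot|$ is smooth away from the origin with $\Delta|x|=(N-1)/|x|$, a Taylor expansion (or the exact computation of $|i\pm\e e_n|$) gives
\[
v_i = |i|-R + \frac{\theta\e^2}{2N}(\Delta_\e u)_i
\]
and, for $|i|$ bounded away from $0$ (say $|i|\ge R/2$, which is the only regime that matters since we want the zero level set), the semiconcavity/Taylor estimate yields $(\Delta_\e u)_i \le \frac{N-1}{|i|} + C_N\frac{\e^2}{|i|^3} \le \frac{C_N'}{|i|}$, hence
\[
v_i \le |i|-R + \frac{C\e^2}{|i|} \qquad\text{and}\qquad v_i \ge |i|-R
\]
(the lower bound because $u$ is convex, so averaging increases it). Here I am hiding the elementary check that near the origin $v_i$ stays very negative so those points never enter the relevant infima; this is where the hypothesis $\e/R$ small is used to keep $|i|\ge R/2$ on $\{v\ge 0\}$, say.

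Next I would run the redistancing. The key point is that $\ud^+[v]_i = \inf_{j:v_j<0} v_j + |j-i|$, and I want to show $\ud^+[v]_i \le |i|-R + \frac{C}{R}\e^2$ for all $i$, then conclude the same bound for $\sd^+[v]$ (which, being $\le \ud^+$ where $v\ge 0$ and $=\ud^+=v$ where $v<0$ by Lemma~\ref{lem:defsd}, only decreases the value). To bound $\ud^+[v]_i$ I pick a good competitor $j$: take $j$ on the grid closest to the radial segment from $i$ toward the origin, at the point at distance roughly $R$ from the origin, i.e.\ with $|j|\approx R - c\e$ so that $v_j<0$. For such $j$, $|j-i| \le |i| - |j| + C_N\e$ and $v_j \le |j| - R + \frac{C\e^2}{|j|} \le (|j|-R) + \frac{C'\e^2}{R}$ (using $|j|\ge R/2$). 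Adding, $v_j + |j-i| \le |i| - R + C''\e + \frac{C'\e^2}{R}$, which is not yet good enough because of the linear $C''\e$ term; so instead one must be more careful and choose $j$ to lie essentially \emph{on} the ray through $i$ and the origin, exploiting that $v_j<0$ already for $|j|$ only $O(\e^2/R)$ below $R$ (since $v_j \le |j|-R+C\e^2/R$, one has $v_j<0$ as soon as $|j|<R-C\e^2/R$), not $O(\e)$ below. Then the gridding error in $|j-i|+|j|$ versus $|i|$ is the real issue.

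I expect the main obstacle to be exactly this: controlling the discretization error in the triangle-inequality step $|i-j|+|j-\text{origin}| \ge |i-\text{origin}|$ when $j$ is constrained to the lattice $\ez$. The naive bound loses an additive $O(\e)$, which would swamp the desired $O(\e^2/R)$. The resolution — and this is the crux — is to not insist that a single lattice point lies near the ray, but to use the inf-convolution structure together with the Lipschitz bound: one shows $\ud^+[v]_i \le v_{j} + |j-i|$ for the best lattice $j$ with $v_j<0$, and since $\{v<0\}\supseteq\{|\cdot|<R-C\e^2/R\}$ one can take $j$ to be the lattice point nearest to the sphere of radius $R - C\e^2/R$ along the direction $i/|i|$; the point $j$ then satisfies $|j - (R-C\e^2/R)\,i/|i||\le \sqrt N\,\e$, and one must track how this $\sqrt N\,\e$ error propagates. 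The trick (as in the derivation of \eqref{eq:compdist} in the excerpt) is that one may iterate or, better, observe that $\sd^+$ (unlike $\ud^+$ alone) is a genuine $1$-Lipschitz function pinned to equal $v$ on $\{v<0\}$, so comparing $\sd^+$ with the exact continuous signed distance to $\{|\cdot|< R - C\e^2/R\}$ — which is the affine-in-$|i|$ function $|i| - R + C\e^2/R$, itself $1$-Lipschitz — and using the characterization in Lemma~\ref{lem:defsd} that $\sd^+$ is the \emph{smallest} $1$-Lipschitz function above $\ud^+$ on $\{v\ge0\}$, gives $\sd^+_i \le |i| - R + C\e^2/R$ once we check $|i|-R+C\e^2/R \ge \ud^+_i$ on $\{v\ge 0\}$, equivalently $\ge v_i$ there, which follows from $v_i\le |i|-R+C\e^2/|i|\le |i|-R+2C\e^2/R$. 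Rewriting $\e^2 = 2Nh/\theta$ gives the stated form. The one genuinely delicate verification is that the test function $|i|-R+C\e^2/R$ dominates $\ud^+$ \emph{everywhere} on $\{v\ge 0\}$, including points $i$ far from the sphere, which reduces to $v_j + |j-i| \le |i| - R + C\e^2/R$ for all $j$ with $v_j<0$ and all such $i$ — and this is where the convexity of $|\cdot|$ (so that $v_j\ge|j|-R$ gives no help, but $v_j \le |j|-R + C\e^2/|j|$ does) combines with $|j-i|\ge \big||i|-|j|\big|$ to close the estimate cleanly, with no leftover $O(\e)$ term.
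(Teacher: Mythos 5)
Your setup is right --- the bound $v_i\le |i|-R+C\e^2/|i|$, the reduction to the region $\{v\ge 0\}$, and the final step of comparing $\sd^+$ with the $1$-Lipschitz function $|i|-R+C\e^2/R$ via Lemma~\ref{lem:defsd} all match the paper --- and you correctly identify the crux: the lattice constraint on the competitor $j$ threatens an additive $O(\e)$ loss in the triangle inequality $|j|+|j-i|\ge |i|$. But neither of your two proposed resolutions closes this gap. First, taking $j$ to be the lattice point nearest to the sphere of radius $R-C\e^2/R$ in the direction $i/|i|$ fails precisely in the critical regime $|i|\approx R$: there the on-ray reference point is within $O(\e^2/R)$ of $i$ itself, the projection $\tilde\jmath$ of $j$ onto $[0,i]$ satisfies $|\tilde\jmath-i|=O(\e)$, and the second-order defect $\bigl(\tfrac{1}{2|\tilde\jmath|}+\tfrac{1}{2|\tilde\jmath-i|}\bigr)|j-\tilde\jmath|^2$ is then of order $\e^2/\e=\e$, not $\e^2/R$. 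Second, your fallback --- ``check $|i|-R+C\e^2/R\ge \ud^+_i$ on $\{v\ge0\}$, equivalently $\ge v_i$ there'' --- reverses an inequality: by Lemma~\ref{lem:defsd}, on $\{v\ge 0\}$ one has $\ud^+_i\ge v_i$, so an upper bound on $v_i$ does not bound $\ud^+_i$ from above. Your closing appeal to $|j-i|\ge \bigl||i|-|j|\bigr|$ is likewise a lower bound on $|j-i|$, which cannot produce the required upper bound on $v_j+|j-i|$; and that upper bound need only hold for \emph{some} admissible $j$, not for all $j$ (it is false for $j=0$, where $v_0+|i|=|i|-R+\e$).

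The paper's fix is the opposite of your competitor choice: take $j$ \emph{deep inside} the ball, within $\sqrt N\e$ of a point $\tilde\jmath\in[0,i]$ with $R/2\le|\tilde\jmath|\le 3R/4$, so that $|\tilde\jmath-i|\ge R/4$ as well. All such $j$ satisfy $v_j<0$, and both denominators in $\bigl(\tfrac{1}{2|\tilde\jmath|}+\tfrac{1}{2|\tilde\jmath-i|}\bigr)|j-\tilde\jmath|^2$ are of order $R$, so the perpendicular lattice error of size $\sqrt N\e$ costs only $3N\e^2/R$ in $|j|+|j-i|-|i|$; combined with $v_j\le|j|-R+\e^2/R$ this gives $\ud^+_i\le |i|-R+(3N+1)\e^2/R$ on $\{v\ge0\}$, after which your (correct) final Lipschitz-comparison step concludes.
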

\noindent Here as before $h,\e,\theta$ are linked by the relationship $\theta=2Nh/\e^2$.
\begin{proof}
  Without loss of generality we assume $\theta=1$. 
  We first observe that
  \[
    v_i = -R + \frac{1}{2N}\sum_{n=1}^N |i+\e e_n|+ |i-\e e_n|.
  \]
  We remark that if $|i|\ge 2\e$, $e\in\{e_n,-e_n:n= 1,\dots ,N\}$,
   {by a   Taylor expansion it holds:}
  \begin{multline}\label{eq:Taylor}
    |i+\e e| = |i| + \e\frac{i\cdot e}{|i|} +
    \int_0^1 (1-t) \frac{1}{|i+t\e e|}\left(I - \frac{(i+t\e e)\otimes (i+t\e e)}{|i+t\e e|^2}
    \right)\e e \cdot \e e\,dt  \\ 
    \le  |i| + \e\frac{i\cdot e}{|i|} + \frac{1}{|i|-\e}\frac{\e^2}{2}
  \end{multline}
  so that:
  \[
    v_i \le |i|-R + \frac{1}{|i|-\e}\frac{\e^2}{2}, \qquad  {\text{for } |i|\ge 2\e}.
  \]
  If we assume that $\e\le \min\{1,R/2\}$, then for $|i|\le R-\e$,
  $v_i< 0$. Hence, since by Lemma~\ref{lem:defsd}
  it is enough
  to estimate $d_i^+$ at points
  $i\in\ez$ with $v_i\ge 0$, we assume  $|i|\ge R-\e$. For all such $i$ we have:
  \begin{equation}
  \begin{split}\label{eq:d+est}
      \ud^+[v]_i :&= \inf_{j:v_j<0}  {(} v_j + |j-i|  {)}
    \le \inf_{j: \frac{R}{2}+\e\le |j|\le R-\e}  {(} v_j + |j-i|  {)} \\
    &\le \inf_{j: \frac{R}{2}+\e \le |j|\le R-\e}  {(} |j|-R + \frac{\e^2}{R} + |j-i|  ),
  \end{split}
  \end{equation}
  assuming $\e\le R/8$ so that the set of $j$'s is not empty.
  Consider $j$ with $\frac{R}{2}+\e \le |j|\le R-\e$, close to the
  segment $[0,i]$:  if $\tilde{\jmath}$ is the projection of $j$ onto $[0,i]$, one has
  \begin{multline*}
    |j|+|j-i| =  \sqrt{ |j-\tilde{\jmath}|^2 +|\tilde{\jmath}|^2} + \sqrt{|\tilde{\jmath}-i|^2+|j-\tilde{\jmath}|^2}
    \\
    \le |\tilde{\jmath}| + \frac{|j-\tilde{\jmath}|^2 }{2|\tilde{\jmath}|}
    + |\tilde{\jmath}-i| + \frac{|j-\tilde{\jmath}|^2 }{2|\tilde{\jmath}-i|}
    = |i| + \left(\frac{1}{2|\tilde{\jmath}|} + \frac{1}{2|\tilde{\jmath}-i|} \right){|j-\tilde{\jmath}|^2 }
  \end{multline*}
  If $\e/R$ is small enough (depending only on $N$), we can find
  $j,\tilde{\jmath}$ such that $|j-\tilde{\jmath}|^2\le N\e^2$ and $R/2\le |\tilde{\jmath}|\le 3R/4$,
  so that $|\tilde{\jmath}-i|\ge R/4$. We obtain for such a choice:
\begin{equation}\label{eq:triangle}
    |j|+|j-i| \le |i|+\frac{3N}{R}\e^2
\end{equation}
  This shows that where $d^+_i$ is non-negative, it is
  less than $|i|-R + \frac{3N+1}{R}\e^2$ as soon as $\e/R$ is small enough. As $\sd^+$ is the smallest
  $1$-Lipschitz function larger than $d^+_i$ where it is non-negative (Lemma~\ref{lem:defsd}),
  this achieves the proof.
\end{proof}
\subsection{Consistency of the algorithm}\label{sec:consistexplicit}

In this section as before, $\theta\in  {(0,1]}$ is fixed and
the small parameters $\e,h$ are linked through $h=\theta\e^2/ {(2N)}$.
We investigate the limit of the scheme as $\e,h\to 0$.

Let $E^{0,\e},u^{0,\e}$ be defined as in Section \ref{sec:algo_explheat}, so that  $u^{0,\e}$, $1$-Lipschitz,
 with $E^{0,\e} = \{i\in\ez:u^{0,\e}_i < 0\}$ and $\comp{(E^{0,\e})} = \{i\in\ez:u^{0,\e}_i \ge 0\}$.
Note  that  as $\e\to 0$,
 $E^{0,\e}\to  {E^0}$ and $\comp{(E^{0,\e})}\to\comp{(\mathring{E}^0)}$ in Hausdorff distance. If we consider a ``nice'' initial closed set $E^0$, such that $E^0$ is the closure of its interior (and  $\partial E$ has empty interior), and is not empty, nor its complement, we can also choose as $E^{0,\e}$ the set $E\cap \ez$. 
 
We let $\sd^{0,\e}=\sd^+[u^{0,\e}]$. We run the algorithm, obtaining a sequence
$\sd^{k,\e}$ of ``signed distance functions''.

As in the statement of Theorem~\ref{th:mainexplicit}, we let for each $t\ge 0$, $d^\e(t) = \sd^{[t/h],\e}$ and then let,
for $t>0$, $E_\e(t) = \{ i\in\ez: d^\e(t)_i<0\}$.
Then we let $E_\e = \{(i,t)\in\ez\times [0,+\infty): i\in E_\e(t)\}$
and $F_\e = \{(i,t)\in\ez \times [0,+\infty): i\not\in E_\e(t)\}$.
We find a subsequence such that both $E_{\e_k}\to E$
and $F_{\e_k}\to \comp{A}$ in the Kuratowski
sense in $\R^N\times [0,+\infty)$, where $A$ is an open set and
$E$ a closed set. Observe that $A\subset E$.
We let 
\[
    T^*=\inf\{ t>0: E \cap (\R^N\times (t,+\infty))=\emptyset \text{ or }
    A^\complement \cap (\R^N\times (t,+\infty))=\emptyset\},
\]    
with  $T^*\in [0,+\infty]$.
Note that it may (in general will) happen that after some time,
$u^{k+1,\e}$ defined by~\eqref{eq:heat} becomes positive (or negative),
in which case $\sd^{k+1,\e}$ (defined by~\eqref{eq:redistance}) will be $+\infty$
(respectively, $-\infty$) and the corresponding sets $E_\e(t)$ (or $\comp{E_\e(t)}$)
will be empty: in the limit, this corresponds to times which are past the extinction
time $T^*$ of $E$ or $\comp{A}$.

Concerning the distances, we see that
if for some $k$, $\sd^{k,\e}_i\ge R>0$ at $i\in\ez$, then $\sd^{k,\e}_j\ge R-|j-i|$
(as it is $1$-Lipschitz). Therefore, using Lemma \ref{lem:balls} and the fact that $\sd^+[-u]=-\sd^-[u]\ge -\sd^+[u]$, we get 
\[ \sd^{k+1,\e}\ge \sd^+[ -v_{\cdot-i} ] = -\sd^-[v_{\cdot-i}] \ge R-  |\cdot-i|-\frac CR h,  \] 
where $v$ has been defined in \eqref{eq:v}. Therefore,  iterating the argument we  show that for some $C\ge 1$
depending only on the dimension, if $\e$ is small enough,
\[
  \sd^{\ell,\e}_i \ge R- \frac{C}{R}(\ell - k )h
\]
for $\ell \ge k$ and as long as the right-hand side is larger than $R/2$ (that is, $\ell h - k h\le R^2/2C$).

This may also be written as 
\begin{equation}\label{eq:nondecreasing}
  d^\e(s)_i \ge d^\e(t)_i - \frac CR(s-t+h)
\end{equation}
for $d^\e(t)_i\ge R>0$, $0\le t\le s\le CR^2$ for some constant $C$ depending
only on the dimension. A similar, symmetric statement holds if $d^\e(t)_i\le -R<0$.
In particular, by our choice of $E^0$, one has $T^*>0$.

The estimate~\eqref{eq:nondecreasing} allows to reproduce the proof of~\cite[Prop.~4.4]{CMP17}.  {In particular, there exists  a subsequence $\e_k\to 0$ such that,  except for a countable set of times, the function
\[
\sum_{i\in\e_k\Z^N} d^{\e_k}_i(t)\chi_{i+[0,\e_k)^N}
\]
converges locally uniformly to some function $d(x,t)$ which
is locally finite for $t<T^*$. Moreover,  its positive and negative parts
 satisfy (thanks to~\eqref{eq:compdist}, \eqref{eq:compdistm}):}
\begin{equation}\label{eq:distEt}
  d^+(\cdot,t) = \dist(\cdot,E(t))
  \quad\text{ and }\quad
  d^-(\cdot,t) = \dist(\cdot,\comp{A}(t)),
\end{equation}
where $E(t) = \{x\in\R^N: (x,t)\in E\}$ and $A(t) = \{x\in\R^N:(x,t)\in A\}$,
for $t<T^*$. For every $x\in\R^N$, the functions $\dist(x,E(\cdot))$ and $\dist(x,\comp{A}(\cdot))$ are
left-continuous and right-lower-semicontinuous. Equivalently, the maps $E(\cdot)$ and $\comp{A}(\cdot)$ are left-continuous and right-upper-semicontinuous with respect to the
Kuratowski convergence. Finally, $E(0)=E^0$ and $A(0)=\mathring{E^0}$.
In addition, $d(\cdot,t)\equiv +\infty$ or $-\infty$  for all $t>T^*$.

From \eqref{eq:distEt} one has in particular that $\comp{E}\cap (\R^N\times (0,T^*))
= \{(x,t)\in \R^N\times (0,T^*): d(x,t)>0\}$ while
$A  {\cap (\R^N\times (0,T^*))}= \{(x,t)\in \R^N\times (0,T^*): d(x,t)<0\}$, and
\begin{align*}
    d^+(x,t)& = \inf\left\{ \liminf_{k\to+\infty} \max\{0,d^{ {\e_k}}_{i_k}(t_k)\}: \e_k\Z^N\times \R_+\ni (i_k,t_k)\stackrel{k\to\infty}{\longrightarrow} (x,t)\right\}\\
    -d^-(x,t)& = \sup\left\{ \limsup_{k\to+\infty} \min\{0,d^{ {\e_k}}_{i_k}(t_k)\}: \e_k\Z^N\times \R_+\ni (i_k,t_k)\stackrel{k\to\infty}{\longrightarrow} (x,t)\right\}
\end{align*}
are the classical relaxed half-limits (see for instance~\cite{BarSonSou,BarSou}).

There are two elementary directions to prove the convergence
of the algorithm. One can establish the consistency with the
viscosity approach of~\cite{Soner93}, showing that $d$ is a viscosity
super-solution of the heat equation in $\{d>0\}$ (and a subsolution
in $\{d<0\}$), or equivalently the
consistency with respect to the distributional
Definition~\ref{def:distributionalsuperflow}.
We describe both proofs, as the first one is more natural
from a historical point of view, yet we found  easier to generalize
the second to non-compactly supported kernels (see Section~\ref{sec:kernels}).

\subsubsection*{The viscosity approach}
For the viscosity point of view, the idea is to consider a smooth
test function $\eta(x,t)$ with $\eta\le d$, $\eta(\bar x,\bar t)=d(\bar x,\bar t)>0$, and assume without loss of generality that the contact point is unique~\cite{CIL}.
Then, it is standard that for small $\e_k$, there is
$i_k\to \bar x$, $i_k\in\e_k\Z^N$, and $t_k\to \bar t$ such that
for all $t>0$ and $i\in \e_k\Z^N$:
\[
  \eta_k(i,t)=  \eta(i,t) + (d^{\e_k}(t_k)_{i_k} - \eta(i_k,t_k) )
  \le d^{\e_k}(t)_i,
  \quad \eta_k(i_k,t_k) = d^{\e_k}(t_k)_{i_k} >0.
\]

We have:
\begin{multline*}
  \eta_k(i_k,t_k) =d^{\e_k}(t_k)_{i_k}\\
  \ge
(1-\theta) d_{i_k}^{\e_k}(t_k-h_k) + \frac{\theta}{2N}\sum_{n=1}^N
\left(d_{i_k+\e_k e_n}^{\e_k}(t_k-h_k)
  + d_{i_k-\e_k e_n}^{\e_k}(t_k-h_k)\right)
\\ 
\ge (1-\theta) \eta_k(i_k,t_k-h_k) + \frac{\theta}{2N}\sum_{n=1}^N
  \left(\eta_k(i_k+\e_k e_n,t_k-h_k) + \eta_k(i_k-\e_k e_n,t_k-h_k)\right),
\end{multline*}
so that:
\[
  \frac{\eta_k(i_k,t_k)-\eta_k(i_k,t_k-h_k)}{h_k}
  \ge \left(\Delta_{\e_k}\eta_k(\cdot,t_k-h_k)\right)_{i_k}.
\]
Using that $\eta$ is smooth and passing to the limit, we recover:
\[
  \frac{\partial\eta}{\partial t}(\bar x,\bar t)\ge \Delta\eta(\bar x,\bar t),
\]
showing that $d^+$ is a viscosity supersolution of the heat equation
in $\{d>0\}$.
\subsubsection*{The distributional approach}
On the other hand, the variational point of view is tackled as follows,
considering rather a test function
 $\eta\in C_c^\infty(\comp{E}\cap(\R^N\times (0,T^*));\R_+)$. The support $U_\eta$  {of $\eta$
is compact and at positive} distance from $E$, hence for $\e_k$ small enough it
is also at positive distance from $E_{\e_k}$ so that
$d^{\e_k}$ is bounded from below by a positive number on $\overline{U}_\eta\cap (\ez\times [0,+\infty))$.
Thanks to~\eqref{eq:subsoldiscrete}, it follows that
\[
\frac{d^{\e_k}(t)_i - d^{\e_k}(t-h_k)_i}{h_k} \ge (\Delta_{\e_k} d^{\e_k}(t-h_k))_i
\]
for $(i,t)\in \overline{U}_\eta$, hence:
\[
  \int_0^{T^*} {\e_k^N}\sum_{i\in {\e_k}\Z} \left(\frac{d^{\e_k}(t)_i - d^{\e_k}(t-h_k)_i}{h_k} - (\Delta_{\e_k} d^{\e_k}(t-h_k))_i\right)\eta(i,t) dt \ge 0.
\]
Rearranging the sums, this is also:
\[
  \int_0^{T^*} {\e_k^N}\sum_{i\in {\e_k}\Z} \left(\frac{\eta(i,t) - \eta(i,t+h_k)}{h_k} - (\Delta_{\e_k} (\eta(t+h_k,\cdot))_i\right)d^{\e_k}(t)_i dt \ge 0.
\]
In the limit (since $\eta$ is smooth, and $d^{\e_k}$ converges uniformly for almost each time), we obtain
\[
  \int_{0}^{T^*}\int_{\R^N} (-\partial_t\eta  - \Delta \eta )d\,dxdt
\ge 0
\]
so that
\begin{equation}\label{eq:distsupsol}
  \frac{\partial d}{\partial t} \ge \Delta d \quad\text{ in }\quad
  \mathcal{D}'(\{(x,t)\in\R^N\times [0,T^*): d(x,t)>0\}),
\end{equation}
that is in the sense of distributions (or measures). In the same way, we have:
\begin{equation}\label{eq:distsubsol}
  \frac{\partial d}{\partial t} \le \Delta d \quad\text{ in }\quad
  \mathcal{D}'(\{(x,t)\in\R^N\times [0,T^*): d(x,t)<0\}).
\end{equation}

\begin{figure}[htb]
\includegraphics[width=0.38\textwidth]{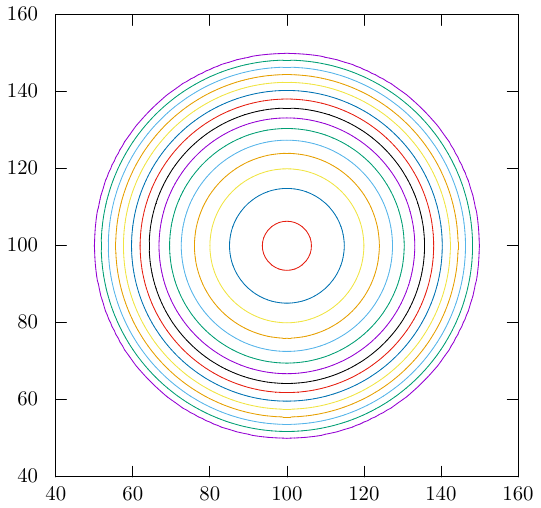}\hfill
\includegraphics[width=0.51\textwidth]{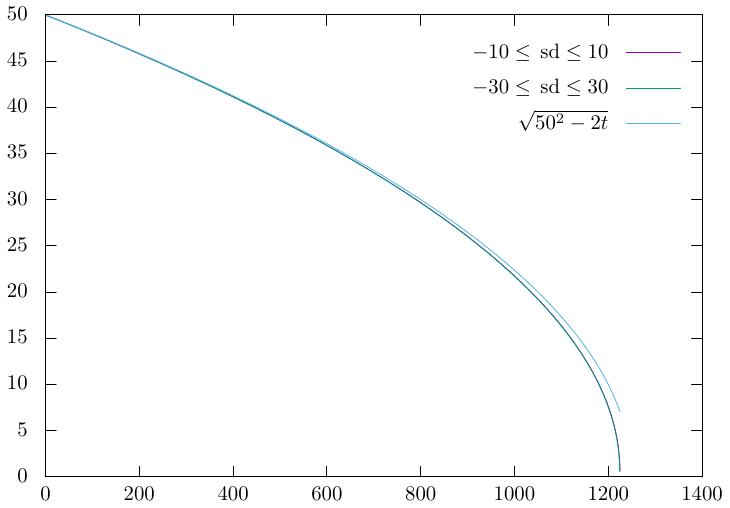}
\caption{Evolution of a disk (initial radius 50) (left) and decay of the radius (right)}\label{fig:Explicit}
\end{figure}

\subsection{Examples} We illustrate
with some numerical experiments the result in Theorem~\ref{th:mainexplicit}.
In Figure~\ref{fig:Explicit}, we plot the evolution of a circle of radius $50$ pixels at equally spaced times
(the computation assumes $\e=1$ and $h=\e/(2N)=0.25$), until extinction at 
$t= 1250$ (actually, extinction occurs too early,
at iteration $4898$, that is time $t=1224.5$). 

We also plot the decay of the radius. We
see that it perfectly follows the exact rate, except at
too small
scales where the numerical evolution is a bit too fast. 
The distance is evaluated with the inf-convolution formulae~\eqref{eq:sdp}, truncated at a certain level $\bar d$
to reduce the cost of this step.
We tried different levels of truncation, yet for this particular
experiments, did not see any gain at truncating at levels
larger than $\bar d=10$: the plot is for $\bar d=30$, and the decay of
the radii is plotted for $\bar d\in \{10,30\}$ yet the two
curves are exactly superimposed.

\begin{figure}[htb]
\includegraphics[width=0.45\textwidth]{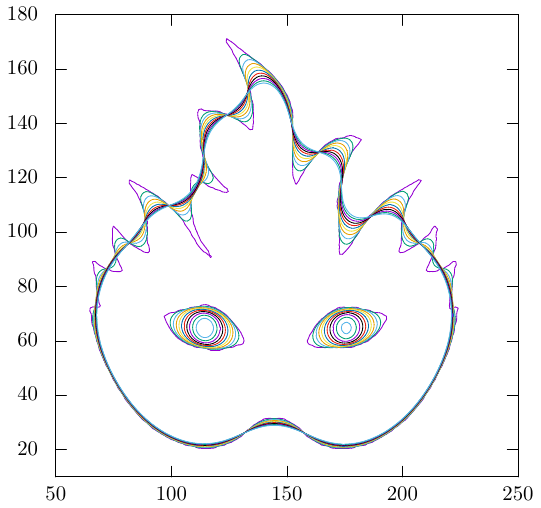}\hfill
\includegraphics[width=0.45\textwidth]{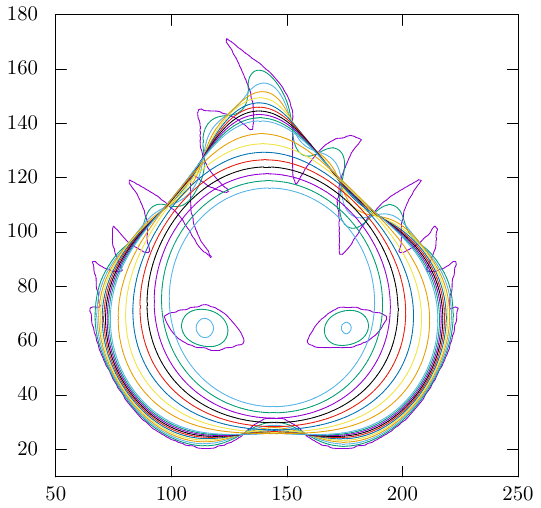}
\caption{Evolution of a shape: left at times $t=0,20,\dots,200$,
right at times $t=0,25,50,\dots,250$ and then $t=375,500,\dots,1250$.}\label{fig:ExplicitMask}
\end{figure}
Figure~\eqref{fig:ExplicitMask} shows the evolution starting from a more complex initial shape, at different successive times.

\section{Extension: general convolution kernels}
\label{sec:kernels}
\subsection{Convolution kernels}
Our idea is now to generalize the algorithm in~\eqref{eq:heat} as follows: given
$\sd_i^{0,\e}$ defined upon an initial set $E^0$, we define for all $k$:
\begin{equation}\label{eq:defalgo}
  \begin{aligned}
    & u^{k+1,\e} = K^\e*\sd^{k,\e}\\
    & \sd^{k+1,\e} = \sd[u^{k+1,\e}]
  \end{aligned}
\end{equation}
where the discrete convolution $*$ is defined for $u,v:\ez\to\R$ as 
\[ 
(u * v)_i = \sum_{j\in\ez} u_j v_{i-j},
\]
$K^\e$ is a convolution kernel and
$\sd[\cdot]$ is one of the redistancing operations defined in~\eqref{eq:sdp} or~\eqref{eq:sdm}.
In the next Section~\ref{sec:Kernel}
we introduce the properties of the kernels $K^\e$
for which convergence is expected. 
Our main result is the following.
\begin{theorem}\label{th:variantexplicit}
  We assume that $E^0$ is such that the generalized mean curvature flow $E(t)$ starting from $E^0$
  is uniquely defined and denote $d_{E(t)}$ the signed distance function to $\partial E(t)$.
  
  Assume that $K^\e$ satisfies~\eqref{Kpositive}, \eqref{Kone}, \eqref{Kh} and~\eqref{eq:approxsymbol} below. 
  Then provided $h\gtrsim \e^2$ as $  {\e,h}\to 0$,
  \[
    \sum_{i\in\e\Z^N}\sd^{[t/h],\e}_i\chi_{i+[0,\e)^N} \to d_{E(t)}
  \]
  locally uniformly in $\R^N$ for all $t\ge 0$ but a countable number.
\end{theorem}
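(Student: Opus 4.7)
The plan is to follow the same three-step scheme used for Theorem~\ref{th:mainexplicit}: a ball-evolution estimate replacing Lemma~\ref{lem:balls}, extraction of Kuratowski limits yielding a limit distance function $d$, and a consistency argument showing that $d^+$ (resp.\ $-d^-$) is a distributional supersolution of the heat equation where positive (resp.\ subsolution where negative). I would first note that the algorithm is well-posed: since $K^\e\ge 0$ by~\eqref{Kpositive} and $\sum_j K^\e_j = 1$ by~\eqref{Kone}, the convolution $K^\e * \sd^{k,\e}$ is a convex averaging and so preserves the $1$-Lipschitz property, making the redistancing step meaningful. Lemma~\ref{lem:defsd} then gives the one-sided discrete inequalities
\[
\frac{\sd^{k+1,\e}_i - \sd^{k,\e}_i}{h} \ge \frac{(K^\e*\sd^{k,\e})_i - \sd^{k,\e}_i}{h}\quad \text{where } \sd^{k+1,\e}_i\ge 0,
\]
with the reverse inequality where it is negative.

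The central technical step is the ball-shrinkage estimate. For $u_j = |j|-R$, I would write $(K^\e*u)_i - |i| + R = \sum_j K^\e_j(|i-j|-|i|)$, Taylor-expand $|i-j|$ around $i$, use symmetry of $K^\e$ to kill the first-order term, and recognize that the second-order contribution is controlled by the second moment of $K^\e$, which by~\eqref{Kh} is exactly of order $h$. This yields $(K^\e*u)_i\le |i|-R+Ch/R$ for $|i|\gtrsim R$, and the same inf-convolution/triangle inequality trick as in~\eqref{eq:d+est}--\eqref{eq:triangle} then gives $\sd^+[K^\e*u]_i\le |i|-R + Ch/R + C\e^2/R$. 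The CFL-type constraint $h\gtrsim \e^2$ is used precisely to absorb the $\e^2/R$ redistancing error into the time-step contribution. Once the ball estimate is established, the monotonicity~\eqref{eq:nondecreasing} follows as before, giving locally uniform compactness of $d^\e(t)$ up to a countable set of times and the existence of the limits $E$, $A$, $d$ with $d^\pm(\cdot,t) = \dist(\cdot,E(t))$, $\dist(\cdot,\comp A(t))$.

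For the consistency step I would follow the distributional approach of Section~\ref{sec:consistexplicit}, which the authors already suggest adapts better to non-compactly supported kernels. For $\eta\in C^\infty_c(\comp E \cap (\R^N\times(0,T^*));\R_+)$, a discrete summation by parts, using the symmetry of $K^\e$, transforms the one-sided inequality above into
\[
\int_0^{T^*}\e^N\sum_{i\in\ez}\left(\frac{\eta(i,t)-\eta(i,t+h)}{h} - \frac{(K^\e*\eta(\cdot,t+h))_i - \eta(i,t+h)}{h}\right)d^\e(t)_i\,dt \ge 0.
\]
It then suffices to prove $(K^\e*\eta - \eta)/h \to -\Delta\eta$ locally uniformly in space-time for smooth $\eta$. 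This is where~\eqref{eq:approxsymbol} enters: writing the convolution through discrete Fourier transform on the dual torus, the hypothesis gives pointwise convergence of the symbol $(\hat K^\e(\xi)-1)/h$ to $-|\xi|^2$, while the second-moment bound~\eqref{Kh} provides the uniform domination $|(\hat K^\e(\xi)-1)/h|\le C|\xi|^2$ required to conclude by dominated convergence against $\hat\eta(\xi)$. Passing to the limit then yields~\eqref{eq:distsupsol} and, by the symmetric argument, \eqref{eq:distsubsol}, so that $E$ is a distributional superflow and $\comp A$ a subflow.

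The main obstacle, as noted in the introduction, is the absence of compact support for $K^\e$: unlike the Laplacian case where only nearest-neighbour values matter, both the ball estimate and the consistency argument require controlling tail sums of $K^\e$ against quantities growing at infinity (such as $|i-j|$ itself). The quantitative second-moment bound~\eqref{Kh} is the essential handle for the ball estimate, and together with~\eqref{eq:approxsymbol} it furnishes exactly what is needed for the Fourier-based consistency. The uniqueness hypothesis on the generalized mean curvature flow starting from $E^0$ then forces $E = \ov{A}$ to coincide with this unique flow, and the locally uniform convergence of $\sum_i \sd^{[t/h],\e}_i\chi_{i+[0,\e)^N}$ to $d_{E(t)}$ follows for all times outside the countable exceptional set.
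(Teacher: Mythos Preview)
Your overall strategy matches the paper's proof (Sections~\ref{sec:balls2}--\ref{sec:proof}): a ball estimate (Lemma~\ref{lem:ballsKh}, Corollary~\ref{lem:balls2}) giving compactness, then a Fourier-based distributional consistency argument (Theorem~\ref{th:mainimplicit}). For the ball estimate, note that a bare Taylor expansion of $|i-j|$ around $i$ is not valid when $|j|\gtrsim |i|$; the paper splits the convolution into $|j|<R/2$ (where the second-order expansion~\eqref{eq:Taylor} applies) and $|j|\ge R/2$ (handled via $|i-j|\le |i|+|j|$ and the second moment), and you should do the same.

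The more serious gap is in the consistency step. After summation by parts you assert that it suffices to prove $(K^\e*\eta-\eta)/h\to\Delta\eta$ (the sign should be $+\Delta\eta$, incidentally) \emph{locally uniformly}. This is not enough: $K^\e*\eta$ does not have compact support even though $\eta$ does, and you are pairing against $d^\e(t)_i$ which grows linearly in $|i|$. Local uniform convergence says nothing about the tails $\e^N\sum_{|i|>R}\tfrac{(K^\e*\eta)_i}{h}d^\e_i$, and with only a second-moment bound on $K^\e$ you cannot deduce enough physical-space decay of $(K^\e*\eta-\eta)/h$ to make this sum converge uniformly in $\e$ by itself. The paper handles this explicitly: it first proves the tail estimate~\eqref{eq:boundremainder}, showing via~\eqref{Kh} that the contribution from $|j'|>R$ is $O(1/R)$ uniformly in $\e$; it then introduces a spatial cutoff $\psi$, replaces $d$ by $\varphi=\psi d\in L^1$, and proves~\eqref{eq:weakconv} by Plancherel together with Lemma~\ref{lem:uniformSchwartz} (which supplies the uniform rapid decay of $\widehat{u^\e}$ needed for dominated convergence, and the uniform $\ell^\infty$ bound~\eqref{eq:boundetaK}). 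Your Fourier/dominated-convergence sketch corresponds to the Plancherel part, but the tail bound and the cutoff argument are the missing pieces without which the passage to the limit is not justified. You correctly flag the tail issue in your final paragraph, but your stated mechanism (``locally uniformly'') does not address it.
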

The proof of this result, based on an adaption of the proof of Theorem~\ref{th:mainexplicit} will
be developed in Sections~\ref{sec:balls2}, \ref{sec:proof}. 

\subsection{Kernel properties} \label{sec:Kernel}
For any $\e>0$ we consider  kernels $K_j^\e$, $j\in\e\Z^N$, with:
\begin{align}
  & \textstyle  K_j^\e= K_{-j}^\e \ge 0 \quad \text{ for all } j\in\e\Z^N, \label{Kpositive} \\
   & \textstyle \sum_{j\in\e\Z^N} K_j^\e=1, \label{Kone}\\
  & \textstyle h=h(\e):= \frac{1}{2N}\sum_{j\in\e\Z^N} |j|^2K_j^\e \to 0 \quad \text{  as } \quad  \e\to 0.  \label{Kh}  
\end{align}

We introduce the discrete Fourier transform, for $\xi\in\R^N$:
\[
  \widehat{K^\e}(\xi) := \e^N\sum_{j\in\e\Z^N} K^\e_j e^{-2i\pi j\cdot\xi}\,,\quad  \widetilde{K^\e} :=\e^{-N}\widehat{K^\e}
\]
which is a $1/\e$-periodic function with $|\widetilde{K^\e}|\le 1= \widetilde{K^\e}(0)$, and we
assume in addition that:
\begin{equation}\label{eq:approxsymbol}
  \lim_{\e\to 0} \frac{1-\widetilde{K^\e}(\xi)}{h} = 4\pi^2 |\xi|^2
\end{equation}
for any $\xi\in\R^N$.

We need \eqref{eq:approxsymbol}  in order to ensure that   
$(K^\e*\eta-\eta)/h$ goes to $\Delta \eta$
as $\e\to 0$ for any test function $\eta$. In particular, the reason for the definition of
$h$ in~\eqref{Kh} is precisely that for $\eta=|x|^2$, $\Delta\eta = 2N$. We observe
that~\eqref{Kh} is equivalent to:
\[
  h =h(\e):= -\frac{\Delta \widetilde{K^\e}(0)}{8\pi^2N} \to 0\quad \text{as}\quad \e\to 0.
  \leqno (\ref{Kh}')
\]

\begin{remark}\label{rmk:expl_lapl}
    We remark that the Euler's explicit  algorithm  considered in the previous Section~\ref{sec:consistexplicit} falls in the present framework.   We fix a time step $\tau(\e)$ converging to $0$ as $\e\to 0^+$ and we start by observing  that \eqref{eq:heat} (with $\tau$ in place of $h$) can be rewritten as in the first equation of \eqref{eq:defalgo}, with the kernel $K^\e$ given by 
\[    
K^\e=\big(1-\tfrac{2N\tau}{\e^2}\big)\delta_0 + \frac{\tau}{\e^2}\sum_{k=1}^N (\delta_{\e e_k}+\delta_{-\e e_k}).
\]
 Here $(\delta_j)_i=1$ if $i=j$ and $0$ otherwise and we require $0<\tau\leq \frac{\e^2}{2N}$. 
  Note that $K^\e\ge 0$, it is symmetric, $\sum_{j\in\ez} K^\e_j=1$ and 
    \begin{equation}\label{eq:defhexpl}
        h:=\frac1{2N}\sum_{j\in\ez} |j|^2 K^\e_j=\frac{\tau}{N\e^2}\sum_{k=1}^N |\e e_k|^2=\tau,    
    \end{equation}
    so that, in particular,  \eqref{Kh} is satisfied. Lastly, let us check \eqref{eq:approxsymbol}. Since
    \[
    \widetilde{K^\e}(\xi)=\sum_{j\in\ez} K^\e_j e^{-2\pi i j\cdot \xi}= \sum_{j\in\ez}  K^\e_j \cos(2\pi j\cdot \xi)=1 + \frac{2\tau}{\e^2}\sum_{k=1}^N(\cos(2\pi \e e_k\cdot \xi)-1),
    \]
    using also \eqref{eq:defhexpl}, we deduce
    \[
    \frac{1-\widetilde{K^\e}(\xi)}h = \frac2{\e^2} \sum_{k=1}^N(1-\cos(2\pi \e e_k\cdot \xi))\to 4\pi^2 |\xi|^2
    \]
    as $\e\to0$.
\end{remark}
 
\begin{remark}\label{rmk:symmetries}
  {Assume that a kernel $K^\e$ satisfies \eqref{Kpositive}-\eqref{Kh}.} Since $\widetilde{K^\e}$ has a maximum at $0$, $D\widetilde{K^\e}(0)=0$,
$D^2\widetilde{K^\e}(0)\le 0$ and we have:
\begin{multline} \label{eq:KD2K}
  \frac{\widetilde{K^\e}(\xi)-1}{h} = \left(\frac{1}{h}\int_0^1 (1-s)D^2\widetilde{K^\e}(s\xi)ds\right)\xi\cdot\xi
  \\ 
  = \frac{1}{2h}\left(D^2\widetilde{K^\e}(0)\xi\right)\cdot\xi + \left(\frac{1}{h}\int_0^1 (1-s)(D^2\widetilde{K^\e}(s\xi)-D^2\widetilde{K^\e}(0))ds\right)\xi\cdot\xi.
\end{multline}
In particular, by~$\eqref{Kh}'$ it follows that\footnote{It is easy to check using the definition that $D^2\widetilde{K^\e}(\xi)\ge D^2\widetilde{K^\e}(0)$.}
\begin{equation}\label{eq:boundsymbol}
  0\le \frac{1-\widetilde{K^\e}(\xi)}{h}\le 4\pi^2 N |\xi|^2
\end{equation}
for any $\xi\in\R^N$. In addition, in case the kernels satisfy also that for all $\xi\in\R^N$,
  $D^2\widetilde{K^\e}(\xi)-D^2\widetilde{K^\e}(0)=o(h)$
  as $\e\to 0$, and $\frac{1}{2h}D^2\widetilde{K^\e}(0)\to -A$ where $A$ is some positive semi-definite
  symmetric matrix, we find
  \[
    \frac{\widetilde{K^\e}(\xi)-1}{h} \to - (A\xi)\cdot\xi
  \]
  as $\e\to 0$ for any $\xi\in\R^N$; eq.~\eqref{eq:approxsymbol} corresponds to the case $A=I_N$.
  
     {In particular, if \eqref{Kpositive}-\eqref{Kh} hold, and if $K^\e$ is symmetric under coordinate permutations and reflections in each coordinate, then:}
\[
  \sum_{j\in\e\Z^N} j\otimes j K^\e_j = \left(\frac{1}{N}\sum_{j\in\e\Z^N} |j|^2K^\e_j  \right)I_N,
\]
which can be written as:
\begin{equation}\label{eq:symK}
  \frac{1}{2h} D^2 \widetilde{K^\e}(0) = -4\pi^2 I_N
\end{equation}
for all $\e>0$. Then \eqref{eq:KD2K} yields:
   \[
   \frac{\widetilde{K^\e}(\xi)-1}{h}  
    = -4\pi^2|\xi|^2 + \left(\frac{1}{h}\int_0^1 (1-s)(D^2\widetilde{K^\e}(s\xi)-D^2\widetilde{K^\e}(0))ds\right)\xi\cdot\xi
  \]
  Therefore, in case the kernels satisfy also that for all $\xi\in\R^N$,
  $D^2\widetilde{K^\e}(\xi)-D^2\widetilde{K^\e}(0)=o(h)$
  as $\e\to 0$, then~\eqref{eq:approxsymbol} follows.   {This can be verified by an explicit computation for the kernel $K^\e$ associated with Euler's explicit scheme (\textit{cf} Remark~\ref{rmk:expl_lapl}).}
\end{remark}

\newcommand{\Ksd}{\mathcal{K}^\e}
\subsection{Control of the balls}\label{sec:balls2}
To obtain  convergence of the algorithm for  general kernels $K^\e$, we first need to show
an equivalent of the estimate in Lemma~\ref{lem:balls} (to obtain
compactness in time of the distances).
We first show the following estimate.
\begin{lemma}\label{lem:ballsKh}
  Let $K^\e$ satisfy~\eqref{Kpositive}, \eqref{Kone}, \eqref{Kh}, and let $R>0$.
  Then for any $i\in\e\Z^N$ with $|i|\ge \frac {3R}4$ it holds 
  \[
    (K^\e*|\cdot|)_i \le |i| + \frac{  {6}N}{R}h
  \]
\end{lemma}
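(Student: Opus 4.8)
The plan is to avoid the Taylor/segment argument used in Lemma~\ref{lem:balls} altogether and instead exploit the symmetry \eqref{Kpositive} to symmetrize the convolution, reducing everything to an elementary pointwise bound on $|i-j|+|i+j|$ that needs no information on the relative sizes of $i$ and $j$.

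First I would check that the series defining $(K^\e*|\cdot|)_i$ converges, so that the symmetrization is legitimate: by the triangle inequality, \eqref{Kone} and Cauchy--Schwarz together with \eqref{Kh},
\[
  \sum_{j\in\e\Z^N} K^\e_j |i-j| \le |i| + \sum_{j\in\e\Z^N} K^\e_j |j| \le |i| + \Big(\sum_{j\in\e\Z^N} K^\e_j |j|^2\Big)^{1/2} = |i| + \sqrt{2Nh} < \infty .
\]
Then, since $K^\e_j=K^\e_{-j}$,
\[
  (K^\e*|\cdot|)_i = \sum_{j\in\e\Z^N} K^\e_j |i-j| = \frac12 \sum_{j\in\e\Z^N} K^\e_j\big(|i-j|+|i+j|\big).
\]

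The core of the argument is the pointwise estimate on $|i-j|+|i+j|$. From $|i\pm j|^2 = |i|^2+|j|^2\pm 2\,i\cdot j$ one gets $|i-j|^2+|i+j|^2 = 2(|i|^2+|j|^2)$ and $|i-j|^2|i+j|^2 = (|i|^2+|j|^2)^2 - 4(i\cdot j)^2 \le (|i|^2+|j|^2)^2$, hence
\[
  \big(|i-j|+|i+j|\big)^2 = |i-j|^2+|i+j|^2 + 2|i-j|\,|i+j| \le 4\big(|i|^2+|j|^2\big),
\]
so that $|i-j|+|i+j| \le 2\sqrt{|i|^2+|j|^2} \le 2|i| + \tfrac{|j|^2}{|i|}$, the last step being $\sqrt{a^2+b^2}\le a+b^2/(2a)$ for $a,b>0$. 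Substituting into the symmetrized sum and using \eqref{Kh},
\[
  (K^\e*|\cdot|)_i \le |i| + \frac{1}{2|i|}\sum_{j\in\e\Z^N} K^\e_j |j|^2 = |i| + \frac{Nh}{|i|},
\]
and since $|i|\ge \tfrac{3R}{4}$ this is at most $|i| + \tfrac{4N}{3R}h \le |i| + \tfrac{8N}{R}h$, which proves the lemma (with room to spare in the constant).

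There is no genuine obstacle here: the only points requiring care are the convergence of the series (needed to justify the symmetrization) and the observation that the bound $|i-j|+|i+j|\le 2|i|+|j|^2/|i|$ is valid for \emph{all} $j$, with no case split on $|j|$ versus $|i|$ --- which is exactly the advantage over the segment-based expansion of Lemma~\ref{lem:balls}, where one needed $|i|\ge 2\e$ and a lower bound on $|i+t\e e|$ along the segment. Should one insist on a Taylor-type argument, one could instead split the sum into $|j|$ small and $|j|$ large, bounding the large-$j$ part crudely by $|i-j|\le |i|+|j|$ and a tail estimate on $\sum K^\e_j|j|$; this also works but is messier and yields a worse constant, so the symmetrization route is preferable.
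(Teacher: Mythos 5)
Your proof is correct, and it takes a genuinely different route from the paper's. The paper splits the sum at $|j|=R/2$: for $|j|\ge R/2$ it uses the crude bound $|i-j|\le|i|+|j|$ together with the Chebyshev-type tail estimate $\sum_{|j|\ge R/2}K^\e_j|j|\le \frac{2}{R}\sum_j K^\e_j|j|^2\le \frac{4Nh}{R}$, and only for $|j|<R/2$ does it symmetrize and invoke the second-order Taylor expansion \eqref{eq:Taylor} of the norm along the segment, which is where the hypothesis $|i|\ge \frac{3R}{4}$ enters (to keep $|i|-|j|\ge R/4$ and control the Hessian of $|\cdot|$); the constant $\frac{8N}{R}$ is the sum of the two contributions $\frac{4N}{R}+\frac{4N}{R}$. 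You instead symmetrize the whole sum and replace the local Taylor argument by the exact algebraic facts $|i-j|^2+|i+j|^2=2(|i|^2+|j|^2)$ and $|i-j|\,|i+j|\le |i|^2+|j|^2$, giving the global inequality $|i-j|+|i+j|\le 2\sqrt{|i|^2+|j|^2}\le 2|i|+\frac{|j|^2}{|i|}$ valid for all $j$. This removes the case split entirely, only uses $|i|\ge\frac{3R}{4}$ at the very last step, and yields the sharper constant $\frac{4N}{3R}h$. All the individual steps check out (the parallelogram identity, the bound $\sqrt{a^2+b^2}\le a+\frac{b^2}{2a}$, the Cauchy--Schwarz justification of convergence, and the normalization $\sum_j K^\e_j|j|^2=2Nh$ from \eqref{Kh}). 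The one thing your cleaner argument gives up is that it leans on the exact Euclidean structure of $|\cdot|$ (the parallelogram law), whereas the paper's segment-wise Taylor expansion is the template that carries over to the nonlinear redistancing estimate of Lemma~\ref{lem:evol_balls_g}, where one convolves $\gamma(|\cdot|-R)$ and a genuinely local expansion is needed; for the lemma as stated, however, your route is preferable.
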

\begin{proof}
  We write:
  \begin{multline*}
    \sum_{j\in\e\Z^N} K_j|i-j|  \le \sum_{|j|\ge R/2} K_j (|i|+|j|)+  \sum_{|j|< R/2} K_j |i-j|
    \\  = |i| +  \sum_{|j|\ge R/2} K_j|j| + \frac{1}{2}  \sum_{|j|< R/2} K_j (|i-j|+|i+j|-2|i|).
  \end{multline*}
  We have, using~\eqref{Kh}:
  \[
    \sum_{|j|\ge R/2} K_j|j|\le \frac{2}{R} \sum_{|j|\ge R/2} K_j|j|^2 \le \frac{4Nh}{R}.
  \]
  For $|i|\ge 3R/4$, $|j|\le R/2$ we also have:
  \[
    |i-j|+|i+j| \le 2|i|  + \frac{2}{R}|j|^2
  \]
  (reasoning as in \eqref{eq:Taylor} in  the proof of Lemma~\ref{lem:balls}), so that
  \[
    \sum_{|j|< R/2} K_j (|i-j|+|i+j|-2|i|) \le \frac{2}{R}\sum_{|j|<R/2} K_j|j|^2 \le \frac{4Nh}{R}
  \]
using again~\eqref{Kh}.  The result follows.
\end{proof}

\begin{corollary}\label{lem:balls2}
  Assume $h(\e)\ge \e^2$. Then there
  exists $C>0$ such that for any $R>0$, there is $\e_0>0$ such that if $\e\le\e_0$,
  \[
    \sd[K^\e*|\cdot|-R]_i \le |i|-R +\frac{C}{R} h\quad \forall i\in\e\Z^N. 
  \]
\end{corollary}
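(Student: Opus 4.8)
The plan is to follow the proof of Lemma~\ref{lem:balls} almost verbatim, replacing the explicit Taylor expansion of $\frac1{2N}\sum_n(|i+\e e_n|+|i-\e e_n|)$ by the general bound of Lemma~\ref{lem:ballsKh}, and using the standing hypothesis $h\ge\e^2$ to absorb the $O(\e^2/R)$ spatial error of the redistancing into the $O(h/R)$ displacement. Set $v:=K^\e*|\cdot|-R$; since $K^\e*(\text{const})=\text{const}$ by~\eqref{Kone}, this equals $K^\e*(|\cdot|-R)$, and by~\eqref{Kpositive}--\eqref{Kone} it is a convex combination of translates of the $1$-Lipschitz function $|\cdot|-R$, hence itself $1$-Lipschitz, so $\sd^\pm[v]$ make sense. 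Because $\sd^-\le\sd^+$ always (point (1) of the comparison Proposition above), it suffices to bound $\sd^+[v]$. By Lemma~\ref{lem:defsd}(2), $\sd^+[v]$ is the \emph{smallest} $1$-Lipschitz function dominating $\ud^+[v]$ on $\{v\ge0\}$; since $w_i:=|i|-R+\tfrac CR h$ is $1$-Lipschitz, it is therefore enough to prove
\[
  \ud^+[v]_i\ \le\ |i|-R+\frac CR h\qquad\text{for every }i\text{ with }v_i\ge0,
\]
for a dimensional constant $C$ fixed once and for all.

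First I locate the two relevant regions. By Cauchy--Schwarz and~\eqref{Kh}, $(K^\e*|\cdot|)_i\le|i|+\sum_{j}K^\e_j|j|\le|i|+\sqrt{2Nh}$, so $v_i\ge0$ forces $|i|\ge R-\sqrt{2Nh}\ge 3R/4$ as soon as $h\le R^2/(32N)$. In the other direction, applying Lemma~\ref{lem:ballsKh} with the parameter $\tfrac43|j|$ in place of $R$ (legitimate since $|j|\ge\tfrac34\cdot\tfrac43|j|$) gives $(K^\e*|\cdot|)_j\le|j|+6Nh/|j|$, whence $v_j\le|j|-R+12Nh/R<0$ whenever $R/2\le|j|\le 3R/4$ and $h$ is small; thus $v<0$ on this discrete annulus.

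Now fix $i$ with $v_i\ge0$, so that $|i|\ge 3R/4$, and estimate $\ud^+[v]_i=\inf_{j:v_j<0}v_j+|j-i|$ by a convenient competitor, exactly as in Lemma~\ref{lem:balls}. Choose a point $\tilde\jmath$ on the segment $[0,i]$ with $|\tilde\jmath|=R/2+\e\sqrt N$ (possible since $|i|\ge 3R/4$ for $\e$ small) and let $j\in\e\Z^N$ be a nearest lattice point, so $|j-\tilde\jmath|\le\tfrac12\e\sqrt N$; for $\e$ small this keeps $R/2\le|j|\le 3R/4$, hence $v_j<0$ and $v_j\le|j|-R+12Nh/R$. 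Writing $\hat\jmath$ for the orthogonal projection of $j$ onto $[0,i]$, one has $|j-\hat\jmath|\le|j-\tilde\jmath|$, and $|\hat\jmath|\ge R/2$, $|\hat\jmath-i|\ge R/8$ for $\e$ small; the Pythagorean identities $|j|^2=|j-\hat\jmath|^2+|\hat\jmath|^2$, $|j-i|^2=|j-\hat\jmath|^2+|\hat\jmath-i|^2$ together with $\sqrt{a^2+b^2}\le a+\tfrac{b^2}{2a}$ and $|\hat\jmath|+|\hat\jmath-i|=|i|$ then give, as in~\eqref{eq:triangle},
\[
  |j|+|j-i|\ \le\ |i|+\Bigl(\tfrac1{2|\hat\jmath|}+\tfrac1{2|\hat\jmath-i|}\Bigr)|j-\hat\jmath|^2\ \le\ |i|+\frac{cN}{R}\e^2
\]
for a dimensional constant $c$. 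Combining, $\ud^+[v]_i\le v_j+|j-i|\le|i|-R+\tfrac{cN}{R}\e^2+\tfrac{12N}{R}h$, and invoking the hypothesis $\e^2\le h$ this is $\le|i|-R+\tfrac CR h$ with $C:=(c+12)N$. By the reduction in the first paragraph, $\sd^+[v]\le w$, and a fortiori $\sd^-[v]\le w$, everywhere. It remains only to pick $\e_0$ (depending on $R$ and $N$) so small that the finitely many ``$\e$ small'' requirements above hold; since $h(\e)\to0$ and $\e\le\sqrt h$, it suffices to ask $h(\e)\le R^2/(64N)$ for $\e\le\e_0$.

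The only genuinely delicate point is the geometric step in the third paragraph: one must ensure that the chosen grid point $j$ really lands in the annulus $\{R/2\le|\cdot|\le 3R/4\}$ on which $v<0$, while simultaneously keeping both $|\hat\jmath|$ and $|\hat\jmath-i|$ bounded below by a fixed fraction of $R$, so that the lattice discretization error in $|j|+|j-i|-|i|$ is controlled by $O(\e^2/R)$ and not worse. Everything else is bookkeeping, and the hypothesis $h\ge\e^2$ enters precisely at the final line, to make this $O(\e^2/R)$ spatial error negligible against the $O(h/R)$ motion — the CFL-type restriction anticipated in the introduction.
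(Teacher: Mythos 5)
Your proof is correct and follows essentially the same route as the paper, which simply cites Lemma~\ref{lem:ballsKh} together with the redistancing estimate from the proof of Lemma~\ref{lem:balls}. You in fact supply a detail the paper leaves implicit — verifying that $v<0$ on the annulus $\{R/2\le|\cdot|\le 3R/4\}$ for a general kernel, via the rescaled application of Lemma~\ref{lem:ballsKh} — and the bookkeeping (Cauchy--Schwarz localization, projection estimate, absorption of the $O(\e^2/R)$ error using $\e^2\le h$) all checks out.
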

This results directly from Lemma~\ref{lem:ballsKh} and the estimate
on the redistancing established in the proof of Lemma~\ref{lem:balls}.

\subsection{Consistency} \label{sec:proof}
We will show the following convergence result,
which has Theorem~\ref{th:variantexplicit} as a corollary.
\begin{theorem}\label{th:mainimplicit}
  Let $E^0\subseteq \R^N$   {be closed} and consider the algorithm \eqref{eq:defalgo}. As $\e\to 0$, the function $d^\e$, defined for $t\ge 0$ and $i\in\e\Z^N$
  by $d^\e(t)_i = \sd^{[t/h],\e}_i$, converge up to subsequences,
  for almost all times and locally uniformly in space to a function
  $d(x,t)$ such that $d^+=\max\{d,0\}$ is the distance function
  to a supersolution of the (generalized)
  mean curvature flow starting from $E^0$,
  and $d^-$ is the distance function to a supersolution starting from
  $\comp{E^0}$. In particular, if the mean curvature flow $E(t)$ starting
  from $E^0$ is unique, then $d^\e(t)$ converges to the signed distance
  function to $E(t)$, up to extinction.
\end{theorem}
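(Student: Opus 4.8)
The plan is to mirror the proof scheme already used for Theorem~\ref{th:mainexplicit}, adapting each step to the general kernel $K^\e$ under the assumptions \eqref{Kpositive}, \eqref{Kone}, \eqref{Kh}, \eqref{eq:approxsymbol} and the scaling hypothesis $h\gtrsim\e^2$. The two pillars are again: (i) a compactness-in-time estimate for the discrete distances coming from the motion of balls, and (ii) an elementary consistency argument passing to the limit in the scheme. First I would record that the scheme \eqref{eq:defalgo} is well-posed: since $K^\e$ is nonnegative with unit mass by \eqref{Kpositive}--\eqref{Kone}, $u^{k+1,\e}=K^\e*\sd^{k,\e}$ is a convex combination (indeed an average) of translates of a $1$-Lipschitz function, hence $1$-Lipschitz, so the redistancing operators $\sd^\pm$ apply and Lemma~\ref{lem:defsd} gives the sign relation $\sd^{k+1,\e}_i\ge0\Leftrightarrow u^{k+1,\e}_i\ge0$, together with the one-sided inequalities analogous to \eqref{eq:subsoldiscrete}--\eqref{eq:supersoldiscrete}: where $\sd^{k+1,\e}_i\ge0$, $\sd^{k+1,\e}_i\ge u^{k+1,\e}_i=(K^\e*\sd^{k,\e})_i$, and the reverse where $\sd^{k+1,\e}_i<0$.

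Next comes the ball estimate, which is exactly Corollary~\ref{lem:balls2}: applying $K^\e$ to $u=|\cdot|-R$ and using Lemma~\ref{lem:ballsKh} away from the origin (and the trivial sign of $v$ near the origin), the redistancing bound from the proof of Lemma~\ref{lem:balls} gives $\sd[K^\e*|\cdot|-R]_i\le|i|-R+\tfrac{C}{R}h$ for $\e$ small. As in Section~\ref{sec:consistexplicit}, combining this with its symmetric counterpart via $\sd^+[-u]=-\sd^-[u]\ge-\sd^+[u]$ and iterating over time yields the monotonicity-type control
\[
  d^\e(s)_i \ge d^\e(t)_i - \tfrac{C}{R}(s-t+h)
\]
whenever $d^\e(t)_i\ge R>0$ and $0\le t\le s\le CR^2$, with the symmetric statement for $d^\e(t)_i\le-R<0$. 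This estimate is precisely what is needed to reproduce \cite[Prop.~4.4]{CMP17}: up to a subsequence, the piecewise-constant interpolations $\sum_i d^{\e_k}_i(t)\chi_{i+[0,\e)^N}$ converge locally uniformly for all but countably many $t$ to a limit $d(x,t)$, locally finite for $t<T^*$, whose positive and negative parts are, by \eqref{eq:compdist}--\eqref{eq:compdistm}, the Euclidean distances to closed/open limit sets $E(t)$ and $\comp{A}(t)$; and $E(0)=E^0$, $A(0)=\mathring{E^0}$, with $d\equiv\pm\infty$ past $T^*$.

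For consistency I would follow the distributional route, since that is the one the authors flag as easier to generalize. Take $\eta\in C_c^\infty(\comp{E}\cap(\R^N\times(0,T^*));\R_+)$; its support is at positive distance from $E$, hence from $E_{\e_k}$ for $k$ large, so $d^{\e_k}$ stays bounded below by a fixed positive number there and the subsolution inequality $\tfrac{d^{\e_k}(t)_i-d^{\e_k}(t-h_k)_i}{h_k}\ge(K^{\e_k}*d^{\e_k}(t-h_k)-d^{\e_k}(t-h_k))_i/h_k$ holds on the support. Test against $\eta$, use the symmetry of $K^\e$ to move the convolution onto $\eta$, and pass to the limit: the time-difference quotient converges to $-\partial_t\eta$, and the crucial point is that by \eqref{eq:approxsymbol}, $(K^{\e}*\eta-\eta)/h\to\Delta\eta$ locally uniformly for smooth compactly supported $\eta$ — here one also uses the uniform bound \eqref{eq:boundsymbol} on $(1-\widetilde{K^\e})/h$ to control the convolution error (passing through Fourier or directly via a second-order Taylor expansion with the second-moment normalization \eqref{Kh}). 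This yields $\partial_t d\ge\Delta d$ in $\mathcal D'(\{d>0\})$ and, symmetrically, $\partial_t d\le\Delta d$ in $\mathcal D'(\{d<0\})$, i.e. $E$ is a superflow from $E^0$ and $\comp{A}$ a superflow from $\comp{E^0}$ in the sense of Definition~\ref{def:distributionalsuperflow}; the semiconcavity remark handles the extra regularity of $\Delta d$. When the flow from $E^0$ is unique, $E=\comp{A}$ up to the boundary and $d$ is the signed distance to $E(t)$ up to extinction, and since the limit is independent of the subsequence the whole family converges.

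The main obstacle is the consistency step's convolution estimate: unlike the compactly supported discrete Laplacian of Section~\ref{sec:base_algo}, the general $K^\e$ may have unbounded support, so showing $(K^\e*\eta-\eta)/h\to\Delta\eta$ requires genuinely using \eqref{eq:approxsymbol} together with the moment bound \eqref{Kh}/\eqref{eq:boundsymbol} to dominate the tail — a Fourier-side argument (pointwise convergence of $(\widetilde{K^\e}-1)/h$ to $-4\pi^2|\xi|^2$ plus the quadratic upper bound, against the rapidly decaying $\widehat\eta$, via dominated convergence) is the cleanest way. A secondary technical point is that the non-compact support also means one must be slightly careful in the ball estimate that the mass of $K^\e$ far from the origin is controlled, which is exactly what Lemma~\ref{lem:ballsKh} supplies via the vanishing second moment.
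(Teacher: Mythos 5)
Your proposal follows essentially the same route as the paper: well-posedness of \eqref{eq:defalgo} via \eqref{Kpositive}--\eqref{Kone}, the ball estimate of Lemma~\ref{lem:ballsKh}/Corollary~\ref{lem:balls2} to get the time-compactness of \cite[Prop.~4.4]{CMP17}, and then the distributional consistency argument. The structure and all the key lemmas you invoke are the ones the paper uses.

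The one place where your sketch is materially thinner than what is actually needed is the consistency step, i.e.\ the proof of \eqref{eq:weakconvlap}. After you use symmetry to move the convolution onto $\eta$, the function $j'\mapsto \sum_j \eta(j,t)\frac{K^\e_{j-j'}-\delta_{j-j'}}{h}$ is \emph{not} compactly supported, and it is paired against $d^\e_{j'}$, which grows linearly in $j'$. Knowing that $(K^\e*\eta-\eta)/h\to\Delta\eta$ locally uniformly (which is what your Fourier-side dominated-convergence argument delivers) is therefore not sufficient to pass to the limit: you need quantitative, $\e$-uniform decay of this quantity at infinity. The paper handles this in three pieces: (a) the far tail $|j'|>R$ is estimated directly from $|d^\e_{j'}|\le C(1+|j'|)$ and the second moment \eqref{Kh}, giving a contribution $O(1/R)$ uniformly in $\e$ (this is \eqref{eq:boundremainder}); (b) a uniform $\ell^\infty$ bound \eqref{eq:boundetaK} on the convolved test function, which requires the $\e$-uniform rapid decay of $\widehat{u^\e}$ proved in Lemma~\ref{lem:uniformSchwartz} (Poisson summation), not just pointwise convergence of the symbols; and (c) a spatial cut-off $\psi$ so that the weak convergence \eqref{eq:weakconv} can be applied to the genuinely $L^1$ function $\varphi=\psi d$ rather than to $d$ itself. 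You name the right ingredients (\eqref{Kh}, \eqref{eq:boundsymbol}, the tail issue) in your closing paragraph, so this is a matter of making the argument precise rather than a wrong approach; but as written, ``pass to the limit'' at that point conceals the only genuinely new technical work that distinguishes Theorem~\ref{th:mainimplicit} from Theorem~\ref{th:mainexplicit}.
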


Again, we expect that with our assumptions, it should not be
too difficult to show consistency in the viscosity sense. This requires
to show a (locally continuous) convergence of $\frac{K_\e-\delta}{h}*\eta_{|\e\Z^N}$
to $\Delta\eta$ as $\e\to 0$, for smooth tests $\eta$.
It seems however that proving consistency with the distributional solutions of Definition~\ref{def:distributionalsuperflow}
is a bit more direct.
\begin{proof}
  {For $k\in\N$ let $j\in\ez$ with $\sd^{k+1}_j>0$. Starting from    \eqref{eq:defalgo} and  since $\sd^{k+1,}_j\ge u_j$  by Lemma \ref{lem:defsd}, we deduce:}
\[
  \frac{\sd^{k+1,\e}_j-\sd^{k,\e}_j}{h} \ge \left(\frac{K^\e- \delta}{h}*\sd^{k,\e}\right)_j,
\]
%in any $j$ with $\sd^{k+1}_j>0$, 
where $\delta = (\delta_j)_{j\in\e\Z^N}$ is defined by $\delta_0=1$,
$\delta_j=0$ for $j\neq 0$. We write this inequality as:
\[
  \frac{d^{\e}(t+h)-d^{\e}(t)}{h} \ge \frac{K^\e- \delta}{h}*d^{\e}(t)
\]
for any $t\ge 0$ where as before, $d^\e(t) = \sd^{[t/h],\e}$.
Thanks to the estimate in Corollary~\ref{lem:balls2}, we obtain,
as in Section \ref{sec:consistexplicit} (see \eqref{eq:nondecreasing}), the same compactness
as in~\cite[Prop.~4.4]{CMP17}. Therefore, up to a subsequence,  $d^\e$ converges
locally uniformly in space and for all times but a countable number (and until an extinction time $T^*\in (0,+\infty]$), to
$d(x,t)=\dist(x,E(t))-\dist(x,\comp{A}(t))$, where $E$ is the Kuratowski limit of $\{(j,t): d^\e(t)_j\le 0\}$
and ${A}\subset E$ the complement of the Kuratowski limit of $\{(j,t): d^\e(t)_j\ge 0\}$. We will show that $E$ is a superflow,
${A}$ a subflow, and in case $E^0$ does not develop ``fattening'' (that is, the mean
curvature evolution from $E^0$ is unique), $A = \mathring{E}$, $E=\overline{A}$ and
$d(x,t)=\sd_{E(t)}(x)$.

By symmetry of the statements,
it is enough to show that $E$ is a superflow. Let
$\eta\in C_c^\infty(\comp{E}\cap (\R^N\times (0,T^*));\R_+)$, then for $\e$ small enough, since $\eta>0$ implies $ d^\e>0$, we can write:
\[
\begin{split}
    &\int_{\R_+}\e^N\sum_{j\in\e\Z^N} \eta(j,t)  \frac{d^{\e}_j(t+h)-d^\e_j(t)}{h} \,dt \\
    &\ge
  \int_{\R_+}\e^N\sum_{j\in\e\Z^N} \sum_{j'\in \e\Z^N} \eta(j,t) \frac{K^\e_{j-j'}-\delta_{j-j'}}{h}d^\e_{j'}(t)
  \,dt
\end{split}
\]
The convergence of  the left hand side does not raise any difficulty:
\[
\lim_{\e\to 0}  \int_{\R_+}\e^N\sum_{j\in\e\Z^N} \eta(j,t)  \frac{d^{\e}_j(t+h)-d^\e_j(t)}{h} \,dt =  -\int_{\R_+}\int_{\R^N} \frac{\partial\eta}{\partial t} d \,dxdt,
\] 
and we want to show convergence also for the
right hand side:
\begin{equation}\label{eq:weakconvlap}
\lim_{\e\to 0}  \int_{\R_+}\e^N\sum_{j\in\e\Z^N} \sum_{j'\in \e\Z^N} \eta(j,t) \frac{K^\e_{j-j'}-\delta_{j-j'}}{h}d^\e_{j'}(t)
  \,dt
=  \int_{\R_+}\int_{\R^N} d\Delta\eta \,dxdt.
\end{equation}
We note that 
there exists $C>0$ such that, for all $t$ such that the support of $\eta(\cdot,t)$ is
not empty,
$|d_j^\e(t)|\le C(1+|j|)$. 
Let $  {R_0\ge 1}$ such that $\text{spt}(\eta)\subset\subset B(0,R_0)\times (0,T^*)$. If $\e>0$ is
small enough then for any $R>2R_0$ and $t<T^*$,
\begin{align}\label{eq:boundremainder}
  &\e^N\left|\sum_{j':|j'|>R} \sum_{j\in \e\Z^N} \eta(j,t) \frac{K^\e_{j-j'}-\delta_{j-j'}}{h}d^\e_{j'}(t)\right|
  \nonumber \\
  &=
  \e^N\left|\sum_{j: |j|<R_0}  \eta(j,t)\sum_{k:|j-k|>R}\frac{K^\e_{k}-\delta_{k}}{h}d^\e_{j-k}(t)\right|
  \\ 
  &\le |B(0,R_0)| \lVert\eta\rVert_\infty \sum_{k: |k|>R-R_0} \frac{|k|^2 K^\e_k}{h} \frac{C(1+|k|+R_0)}{|k|^2}
  \le 2N\frac{3C|B(0,R_0)|\lVert\eta\rVert_\infty}{R-R_0}\to 0\nonumber
\end{align}
as $R\to\infty$ (uniformly in $\e$), using~\eqref{Kh} for the   {second} inequality.

We fix $t\in (0,T^*)$ and we now show that for any $\varphi\in L^1(\R^N)$,
\begin{equation}\label{eq:weakconv}
  \e^N \sum_{j\in\e\Z^N} \sum_{j'\in\e \Z^N} \eta(j,t)  \frac{K^\e_{j-j'}-\delta_{j-j'}}{h}\varphi^\e_{j'}
  \to \int_{\R^N} \Delta \eta (x,t)\varphi(x) \,dx
\end{equation}
as $\e\to 0$, where $\varphi^\e_{j'} = \e^{-N}\int_{j'+[0,\e)^N} \varphi\,dx$ for any $j'$.
Denoting 
\[
u^\e_{j'}= \sum_{j\in\e\Z^N}  \eta(j,t)  \frac{K^\e_{j-j'}-\delta_{j-j'}}{h},
\]
we observe that $u^\e\in \ell^1(\ez)\cap \ell^\infty(\ez),\varphi^\e\in \ell^1(\e\Z^N)$, and that
Plancherel's formula holds:
\[
  \e^N \sum_{j'\in\e\Z^N} u^\e_{j'} \varphi^\e_{j'} = \int_{[-\frac{1}{2\e},\frac{1}{2\e}]^N}
  \widehat{u^\e}(\xi) \overline{\widehat{\varphi^\e}}(\xi) d\xi.
\]
This follows from the calculation
\[
   \int_{  {[-\frac{1}{2\e},\frac{1}{2\e}]^N}}
   \widehat{u^\e}(\xi) \overline{\widehat{\varphi^\e}}(\xi) d\xi
   = \e^{2N} \sum_{j,j'}\int_{[-\frac{1}{2\e},\frac{1}{2\e}]^N} u^\e_j e^{-2i\pi j\cdot \xi} \varphi^\e_{j'}e^{2i\pi j'\cdot\xi}\,d\xi
   = \e^N\sum_j u^\e_j \varphi^\e_j
 \]
 which obviously holds for finite sums, and then in the limit. 
 We denote $(\eta^\e_j)_{j\in\e\Z^N}$ the restriction of $\eta(\cdot,t)$ to  $\e\Z^N$.
 Then, we have:
 \[
   \widehat{u^\e}(\xi) = \e^N\sum_{j'\in\e\Z^N}\sum_{j\in\e\Z^N} \eta(j,t)\frac{K^\e_{j-j'}-\delta_{j-j'}}{h}
   e^{-2 i\pi (j+(j'-j))\cdot\xi}
   = \widehat{\eta^\e}(\xi) \frac{\widetilde{K^\e}(\xi)-1}{h}
 \]
 where as before $\widetilde{K^\e} = \e^{-N}\widehat{K^\e}$.
 In particular thanks to~\eqref{eq:boundsymbol}, $|\widehat{u^\e}(\xi)|\le 4\pi^2 N|\xi|^2|\widehat{\eta^\e}(\xi)|$, and  thanks to~\eqref{eq:approxsymbol},
 $\widehat{u^\e}(\xi)\to -4\pi^2|\xi|^2\widehat{\eta}(\xi,t)$ as $\e\to 0$ for all $\xi\in\R^N$,
where $\widehat{\eta}(\cdot,t)$ denotes the Fourier transform in $\R^N$ of $\eta(\cdot,t)$.
 Lemma~\ref{lem:uniformSchwartz} then shows that $\widehat{u^\e}$, extended by
 $0$ out of $[-\frac{1}{2\e},\frac{1}{2\e}]^N$, is rapidly decaying with estimates
 independent of $\e$. We note that the constants in these estimates depend
 only on the regularity of $\eta(\cdot,t)$ and can also be chosen to not depend on $t$.
In particular there exists $M>0$ independent of $t$ such that for all $\xi\in[-\frac{1}{2\e},\frac{1}{2\e}]^N$,
 \[
   |\widehat{u^\e}(\xi)|\le \frac{M}{1+|\xi|^{N+1}},
 \]
 which shows that
 \begin{equation}\label{eq:boundetaK}
\left|\sum_{j'\in\e\Z^N}  \eta(j',t)  \frac{K^\e_{j'-j}-\delta_{j'-j}}{h}\right|
=
   |u_j^\e| = \left|\int_{[-\frac{1}{2\e},\frac{1}{2\e}]^N} \widehat{u^\e}(\xi)e^{2\pi i \xi\dot j}d\xi
   \right|\le CM
   \end{equation}
   for some dimensional constant $C>0$.

 On the other hand, the functions $\widehat{\varphi^\e}$ are such that for any $\xi\in\R^N$,
  \[
    |\widehat{\varphi^\e}(\xi)|\le \lVert\varphi\rVert_{L^1(\R^N)},\quad
    \widehat{\varphi^\e}(\xi) \to \widehat{\varphi}(\xi).
  \]
  From Lebesgue's dominated convergence theorem we deduce:
   \[
   \int_{[-\frac{1}{2\e},\frac{1}{2\e}]^N}
   \widehat{u^\e}(\xi) \overline{\widehat{\varphi^\e}} d\xi
   \stackrel{\e\to 0}{\longrightarrow}
   \int_{\R^N}  -4\pi^2|\xi|^2\widehat{\eta}(\xi,t)\overline{\widehat{\varphi}}\,d\xi
   = \int_{\R^N} \Delta\eta(x,t)\varphi(x)\,dx.
 \]
 This shows~\eqref{eq:weakconv}.

 We now fix $\delta>0$ and consider a spatial cut-off $\psi\in C_c^\infty(\R^N;[0,1])$ with $\psi\equiv 1$ on  $B(0,R)$, where (using~\eqref{eq:boundremainder}) $R$ is such that, for $\e>0$
 small enough,
\begin{equation}\label{eq:est_1-psi}
\left|  \int_{\R_+}\e^N\sum_{j':|j'|>R} \sum_{j\in \e\Z^N} \eta(j,t) \frac{K^\e_{j-j'}-\delta_{j-j'}}{h}d^\e_{j'}(t)\,dt\right|
  \le \delta,
\end{equation}
and $R>R_0$ with spt$(\eta)\subset\joinrel\subset B(0,R_0)\times (0,T^*)$.
We let $\varphi(x,t):=\psi(x)d(x,t)$ and as before $\varphi^\e_j(t) 
= \e^{-N}\int_{j+[0,\e)^N} \varphi(x,t)\,dx$ for any $j$. Using~\eqref{eq:weakconv} for
all $t\in (0,T^*)$ and the uniform bound~\eqref{eq:boundetaK}, we obtain
\begin{equation}\label{eq:est_varphi}
\lim_{\e\to 0}\int_0^{T^*} \e^N \sum_{j\in\e\Z^N} \sum_{j'\in\e \Z^N} \eta(j,t)  \frac{K^\e_{j-j'}-\delta_{j-j'}}{h}\varphi^\e_{j'}(t)\, dt
= \int_0^{T^*} \int_{\R^N} \Delta \eta (x,t)d(x,t) \,dx  dt
\end{equation}
thanks to Lebesgue's theorem, and using that $\varphi=d$ in the support of $\eta$.

By the locally uniform convergence of $d^\e$ to $d$
and the continuity of $\varphi$ one also has
that $\varphi^\e_j(t)-d_j^\e(t)\psi(j)\to 0$ uniformly as $\e \to 0$,
and vanishes at distance $\sqrt{N}\e$ of the support of $\psi$.
Hence,  splitting
$d^\e_{j'}(t)=\varphi^\e_{j'}(t) + (\psi(j')d_{j'}^\e(t)-\varphi^\e_{j'}(t)) + (1-\psi(j'))d_{j'}^\e(t)$ and using~\eqref{eq:boundetaK}, \eqref{eq:est_1-psi} and \eqref{eq:est_varphi}, one obtains
\begin{equation*}
  \limsup_{\e\to 0}
  \left| \int\e^N\!\!\sum_{j\in\e\Z^N} \sum_{j'\in \e\Z^N}\!\! \eta(j,t) \frac{K^\e_{j-j'}-\delta_{j-j'}}{h}d^\e_{j'}(t)
   \,dt -
   \iint \Delta \eta (x,t)d(x,t) \,dx  dt\right|
 \le \delta
\end{equation*}
and since $\delta$ is arbitrary we deduce that
\[  -\int_0^\infty \int_{R^N} \frac{\partial\eta}{\partial t} d\,dxdt
  \ge \int_0^\infty \int_{R^N} d\Delta\eta dxdt
\]
that is,
\begin{equation}\label{eq:super}
  \partial_t d \ge \Delta d
\end{equation}
in $\comp{E}$ in the distributional sense.
\end{proof}

\section{Nonlinear redistancing: applications to  deep learning based approximation algorithms }\label{sec:nonlinear}

In this section we give
a partial explanation to the good numerical results produced in~\cite{DeepLearningMCF}
by fully learned approaches for the mean
curvature flow.
There, the authors propose to learn a Neural Network
whose structure mimicks a convolution-redistancing scheme.
Starting from a discrete 2D or 3D image,
supposed
to approximate a phasefield 
\[
u(x,t)= \gamma\left(\frac{1}{\sigma}\sd_E(x,t)\right),
\]
where $\g$ is the Modica-Mortola optimal profile and $\sigma>0$, their simplest network is implementing a convolutional filter followed by a nonlinear operation.
The convolutional filter 
approximates $\rho*u$ for some (learned) nonnegative kernel $\rho$, while the  nonlinear operation   is supposed to rebuild
from the convolution a profile $V\approx\gamma(\frac{1}{\sigma}\sd_E(x,t+h))$
after a time-step $h>0$.
Except for the non-linearity, which
is a pointwise update, this is quite close to the
process described in this paper.
Our approach seems to justify that learning the filter
is easy and robust: indeed, we have seen that basically
any symmetric enough filter is able to approximate the
mean curvature flow, with an appropriate redistancing (see Remark~\ref{rmk:symmetries}).
In addition, it is not difficult to check that this
flow is also characterized by the property that
$v$ defined above is a super/sub solution of
the heat flow in/out the evolving shape, see below for details. It means that one
could imagine to replace the signed distance function with such a non-linear profile in the scheme. 
In this section, we show how to make this heuristic rigorous.

\subsection{Algorithm and control of the balls}
We denote $\gamma$  the optimal profile of some Cahn-Hilliard (a.k.a.~Modica-Mortola) energy
\[
\int \frac\sigma2 |\nabla u|^2dx + \frac{1}{\sigma}\int W(u)dx
\]
where $0<\sigma\in\R$ and  $W$ is a (smooth) two-wells potential, with $W(t)>W(-1)=W(1)$ for $t\not\in\{-1,1\}$. This means
that $\gamma: \R\to (-1,1)$ is the minimizer of
\[
\int_{-\infty}^{+\infty} \frac 12  (\gamma'(s))^2 + W(\gamma(s)) ds
\]
with $\gamma(0)=0$, $\lim_{s\to\pm \infty}\gamma(s)=\pm 1$, which satisfies both $\gamma''(s) = W'(\gamma(s))$ and $ \gamma'(s) =\sqrt{2W(\gamma(s))}$ 
for $s\in\R$,
see for instance~\cite{Alberti96}.  For
simplicity, we use the standard choice $W(t)=(1-t^2)^2/2$. In this instance, one finds that 
\[
\g(s)=\tanh(s),
\]
  {in particular $|\gamma|\le 1$,} and that it holds
\[
\g'(s) = 1-\g^2(s)\ge 0, \quad \g''(s)=-2\g(s)\g'(s).
\] 
We   consider a kernel $K^\e$ satisfying~\eqref{Kpositive}, \eqref{Kone}, \eqref{Kh} and define 
the following algorithm. Given
$u^{0,\e}:=\sd^{0,\e}$ defined upon an initial closed   {and non empty} set $E^0\subseteq \R^N$, we define for all $k\in\N$
\begin{equation}\label{eq:alg_sdg}
    u^{k+1,\e}=\sd[\gamma^{-1}(K^\e\ast \gamma(u^{k,\e}))].
\end{equation}
Let us note that the algorithm above is well-defined, in the sense that if $u:\ez\to \R$ is 1-~Lipschitz and $K^\e$ is non-negative and $\sum_{i\in\ez}K^\e_i=1$ then $\g^{-1}(K^\e\ast \g(u))$ is still 1-Lipschitz. Indeed, consider $f,g:\R\to \R$ 1-Lipschitz and $t\in(0,1)$. We have 
\begin{multline*}
    \left\lvert \frac d{dx}\g^{-1}(t\,\g(f(x))+(1-t)\,\g(g(x)))  \right \rvert =\left \lvert \frac{t\g'(f(x))f'(x)+(1-t)\g'(g(x))g'(x)}{1-\big(t\,\g(f(x))+(1-t)\,\g(g(x))\big)^2}\right\rvert \\[1.5ex]
    \le \frac{t\g'(f(x))+(1-t)\g'(g(x))}{1-\big(t\,\g(f(x))+(1-t)\,\g(g(x))\big)^2},
\end{multline*}
where we used that $\g'\ge 0$. By convexity,
$(t\g(f(x))+(1-t)\g(g(x)))^2
\le t\g(f(x))^2+(1-t)\g(g(x))^2$ so that
\[
\frac{t\g'(f(x))+(1-t)\g'(g(x))}{1-\big(t\,\g(f(x))+(1-t)\,\g(g(x))\big)^2}
\le
\frac{t\g'(f(x))+(1-t)\g'(g(x))}{1-t\,\g(f(x))^2-(1-t)\,\g(g(x))^2}.
\]
Recalling $\g'=1-\g^2$ we get 
\[
 \Big \lvert\frac d{dx}\g^{-1}(t\,\g(f(x))+(1-t)\,\g(g(x)))\Big \lvert\le  \frac{1 -t\g^2(f(x))-(1-t)\g^2(g(x))}{1-t\g^2(f(x))-(1-t)\g^2(g(x))}=1.
\]
Thus, considering $u,v:\ez\to \R$ 1-Lipschitz,  we can extend them to functions defined on $\R^N$ and by the estimate above   we deduce that $\g^{-1}(t\,\g(u)+(1-t)\,\g(v))$
is 1-Lipschitz on $\ez$. This allows to show that $\g^{-1}(K^\e\ast \g(u))$ is 1-Lipschitz on $\ez$.  Therefore, the term $\sd[\g^{-1}(K^\e\ast \g(u))]$ appearing in \eqref{eq:alg_sdg} is well-defined.

We then prove an estimate on the evolution speed of balls. 
 
\begin{lemma}\label{lem:evol_balls_g}
    Assume that $h(\e)\ge \e^2$.  There exist two constants $R_0,c>0$, such that the following holds. For every $R\in (0,R_0)$ and  for $h$ small enough  it holds
\begin{equation}\label{eq:speed_ball_g}
\sd^+\Big[\gamma^{-1}(K^\e\ast \gamma(|\cdot|-R))\Big]\le |\cdot|-R + c\tfrac hR.
\end{equation}
\end{lemma}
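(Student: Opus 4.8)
The plan is to mimic the proof of Lemma~\ref{lem:balls} and Corollary~\ref{lem:balls2}, reducing the nonlinear estimate \eqref{eq:speed_ball_g} to a statement about the \emph{linear} convolution $K^\e * (|\cdot|-R)$, after controlling the error introduced by conjugating with $\g$. Write $u_i=|i|-R$ and $w := \g^{-1}(K^\e*\g(u))$; by Lemma~\ref{lem:defsd} it suffices to bound $w_i$ from above at points where $w_i\ge 0$, i.e.~where $\g(u)_i$ has been ``averaged up'' past $0$, which (since $\g$ is increasing and $K^\e$ has mass one and is supported at scale $\sqrt{h}\ll R$) forces $|i|\ge R - o(R)$. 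So I restrict attention to $|i|$ close to $R$, say $|i|\ge 3R/4$, where $\g$ and all its derivatives are comparable to their values at a point, and the key is to Taylor-expand $\g^{-1}$ around the value $\g(u_i)$.

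First I would write, for fixed $i$ with $|i|\ge 3R/4$,
\[
\g(u_{i-j}) = \g(u_i) + \g'(u_i)(u_{i-j}-u_i) + \tfrac12 \g''(\zeta_{i,j})(u_{i-j}-u_i)^2,
\]
sum against $K^\e_j$, use symmetry $K^\e_j=K^\e_{-j}$ to kill the first-order term up to the second-order curvature term of $|\cdot|$ (exactly as in \eqref{eq:Taylor}, giving $\sum_j K^\e_j(u_{i-j}-u_i)\le \frac{2N}{R}h$ via the semiconcavity of $|\cdot|$ and \eqref{Kh}), and bound the quadratic remainder using $\sum_j K^\e_j (u_{i-j}-u_i)^2 \le \sum_j K^\e_j |j|^2 = 2Nh$ together with $\g''\le 0$ (so this term only helps) and $|\g''|\le C$. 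Thus $K^\e*\g(u)_i \le \g(u_i) + \g'(u_i)\frac{2N}{R}h + C\,h$. Applying $\g^{-1}$, whose derivative near $\g(u_i)$ is $1/\g'(u_i)$ with a controlled second derivative, I get $w_i \le u_i + \frac{2N}{R}h + C'(1+\tfrac1{\g'(u_i)})h$; here I must check $\g'(u_i)=1-\tanh^2(u_i)$ is bounded below on the relevant range, which is where the hypothesis $R<R_0$ and $h$ small enter — for $R$ bounded and $|i|$ not too large $u_i$ stays bounded, so $\g'(u_i)\ge c(R_0)>0$. Collecting terms yields $w_i \le |i|-R + c\frac hR$ at points where $w_i\ge 0$, and then, since $\sd^+$ is the smallest $1$-Lipschitz function lying above $w$ where $w\ge 0$ (Lemma~\ref{lem:defsd}), the same bound propagates to $\sd^+[w]$, exactly as at the end of the proof of Lemma~\ref{lem:balls}. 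This last step also uses the explicit $1$-Lipschitz comparison function $|\cdot|-R+c h/R$.

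The main obstacle I anticipate is uniformity: unlike the purely linear case, the conjugation by $\g$ produces error terms weighted by $\g'(u_i)$ and by second derivatives of $\g^{-1}$ evaluated at the convolved value, and these blow up as $|u_i|\to\infty$ (since $\g'\to 0$). This is precisely why the statement is restricted to $R\in(0,R_0)$ and to ``$h$ small'': one needs $u_i=|i|-R$ to range over a set on which $\g'$ is bounded away from $0$, which is guaranteed once one has localized to $|i|\le C R\le CR_0$ — and that localization is itself legitimate because for $|i|\ge 2R$, say, the averaged value $K^\e*\g(u)_i$ is negative (all mass of $K^\e$ concentrated within $O(\sqrt h)$ of $i$, where $u<0$ is bounded away from $0$), hence $w_i<0$ and nothing needs to be proved there. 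A secondary technical point is controlling $\g^{-1}$ near $\pm1$: one never actually reaches the singularity because $K^\e*\g(u)$ stays strictly inside $(-1,1)$ with a quantitative gap depending on $R$, but this gap must be tracked to bound $(\g^{-1})''$. Once these uniformity issues are handled the computation is routine Taylor expansion plus \eqref{Kh}, and the redistancing step is identical to the one already carried out.
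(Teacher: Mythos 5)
There is a genuine gap, and it lies in the reduction to the redistancing operator, not in the Taylor computation (which is essentially the one the paper uses). You propose to bound $w_i=\g^{-1}(K^\e*\g(u))_i$ from above \emph{at the points where $w_i\ge 0$} and then invoke Lemma~\ref{lem:defsd}. But by that lemma, on $\{w\ge 0\}$ the output of the redistancing is $\sd^+[w]_i=\ud^+[w]_i=\inf_{j:w_j<0}\bigl(w_j+|j-i|\bigr)$, which is in general strictly \emph{larger} than $w_i$ (it is the smallest $1$-Lipschitz function above $\ud^+[w]$, not above $w$). So a pointwise upper bound on $w$ over $\{w\ge0\}$ does not propagate to $\sd^+[w]$ there. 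What is actually needed is an upper bound on $w_j$ at points $j$ \emph{inside} the ball, where $w_j<0$, combined with the triangle-inequality geometry of~\eqref{eq:triangle} (choosing $j$ near the segment $[0,i]$ with $|j|+|j-i|\le|i|+3N\e^2/R$). This is exactly how the paper proceeds: it proves $K^\e*\g(|\cdot|-R)\le\g(|\cdot|-R+ch/R)$ only on the annulus $R/4<|i|<R/2$ (deep enough inside the ball that $\g'$ is bounded below by $1/2$ since $R\le R_0$, but close enough that the $|j-i|$ penalty is efficient), deduces $v\le0$ on $B_{R/2}$, and then runs the inf-convolution argument of Lemma~\ref{lem:balls} over those interior points.

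Your framing also creates a second problem that the paper's localization avoids: on $\{w\ge 0\}$ the base point $u_i=|i|-R$ can be arbitrarily large, so $\g'(u_i)=1-\tanh^2(u_i)$ is exponentially small and your error term $C'(1+1/\g'(u_i))h$ from inverting $\g$ blows up. Your proposed fix — that for $|i|\ge 2R$ the averaged value $K^\e*\g(u)_i$ is negative so nothing needs proving — is backwards: for $|i|\ge 2R$ one is \emph{outside} the ball, $u>0$ there, and the average is positive; it is precisely the region where $w_i\ge0$ and where, in your framing, the bound would still be required. (A smaller looseness: $K^\e$ is not compactly supported, so "mass concentrated within $O(\sqrt h)$" must be replaced by the Chebyshev split $|j|\lessgtr R/8$ via~\eqref{Kh}, as in Lemma~\ref{lem:ballsKh}; your quadratic-remainder bound already essentially does this.) The repair is to redirect your Taylor expansion to the annulus $|i|\in(R/4,R/2)$, where $|u_i|\le 3R/4\le R_0$ keeps $\g'$ bounded below, and then finish with the inf-convolution geometry rather than with a pointwise comparison on $\{w\ge0\}$.
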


\begin{proof}
Let us fix $0<R_0\le 1$ such that $\g'(R_0)\ge \frac12$. We denote 
\[
v:=K^\e\ast \g(|\cdot|-R).
\]
We start remarking that  the thesis 
follows once we show that in the region $\{ v\ge 0\}$ it holds 
\begin{equation}\label{eq:goal_sd+_g}
    \text{d}^+[\g^{-1}( v)]\le |\cdot|-R+\tfrac{ch}R,
\end{equation}
as then $\text{sd}^+$ is the smallest 1-Lipschitz function lying above $\text{d}^+$ where it is non-negative.

We will in fact prove the following inequality
\begin{equation}\label{eq:subgoal_sd+_g}
    v\le \g(|\cdot|-R+\tfrac{ch}R)  \qquad \text{for all } |i|\in (\tfrac R4, \tfrac R2 ),
\end{equation}
for $R\le R_0$ and $h,\e$ small.

\noindent\textit{Proof of \eqref{eq:speed_ball_g} assuming \eqref{eq:subgoal_sd+_g}.}
If \eqref{eq:subgoal_sd+_g} holds, then we deduce that  
\begin{equation}\label{eq:sign_v}
    v\le 0\quad  \text{ in } B_{R/2}\cap \ez.
\end{equation}
Indeed, from \eqref{eq:subgoal_sd+_g} this inequality already holds in the region $(B_{R/2}\setminus B_{R/4})\cap \ez$. Then, consider a point $i \in B_{R/4} \cap \ez$: there exists $j$ such that $|i - j| \le R/4 + \sqrt{N}\, \e$ and $  |j| \in (R/4,R/4 + \sqrt{N}\, \e)$. By the 1-Lipschitz property, we obtain
\[
v_i \le v_j + \tfrac{R}{4} + \sqrt{N}\, \e \le \g\left(-\tfrac{3R}{4} + \sqrt{N}\, \e + \tfrac{ch}{R}\right) + \tfrac{R}{4} + \sqrt{N}\, \e.
\]
Moreover, assuming $\e,h$  small so that $\g'(-\tfrac{3R}{4} + \sqrt{N}\, \e + \tfrac{ch}{R}) \ge \g'(-R_0) \ge 1/2$, by the equation above and the  convexity of $\g$ in $\{\g \le 0\}$ we get 
\[
\begin{split}
v_i \le \g\left(-\tfrac{3R}{4} + \sqrt{N}\, \e + \tfrac{ch}{R}\right) + \g'\left(-\tfrac{3R}{4} + \sqrt{N}\, \e + \tfrac{ch}{R}\right)\left(\tfrac{R}{2} + 2\sqrt{N}\, \e\right)\\
\le \g\left(-\tfrac{R}{4} + 3\sqrt{N}\, \e + \tfrac{ch}{R}\right).
\end{split}
\]
The right-hand side can be made negative by choosing $\e, h$ sufficiently small. This concludes the proof of \eqref{eq:sign_v}.

With \eqref{eq:sign_v} in hand, we can  follow the argument from Lemma~\ref{lem:balls} to establish \eqref{eq:goal_sd+_g}, and thus complete the proof of the lemma. Specifically, since \eqref{eq:sign_v} holds in $B_{R/2}\cap \ez$, it remains to prove \eqref{eq:goal_sd+_g} for $|i| \ge R/2$. For such $i$, we apply the definition of $\text{d}^+$ from \eqref{eq:sdp}, restricting the infimum to the region where $|j| \in (R/4, R/2)$. Using the bound from \eqref{eq:subgoal_sd+_g}, we obtain an estimate analogous to the right-hand side of \eqref{eq:d+est}. Since $|i|$ is bounded from below, the conclusion follows exactly as in Lemma~\ref{lem:balls}.

\noindent\textit{Proof of \eqref{eq:subgoal_sd+_g}.} To show \eqref{eq:subgoal_sd+_g}, we fix $i\in\ez$ with $|i|\in (R/4,R/2)$  and  estimate 
\begin{equation}
    \begin{split}\label{eq:est_v_g}
    v_i=(K^\e\ast \g(|\cdot|-R))_i &\le  \sum_{|j|\le R/8} K^\e_j \g(|i-j|-R) +  \sum_{|j|\ge R/8} K^\e_j  \Big( \g(|i|-R)+|j|\Big) \\
    &\le \sum_{|j|\le R/8} K^\e_j \g(|i-j|-R) +  \sum_{|j|\ge R/8} K^\e_j \g(|i|-R) +\tfrac {8h}{R},
    \end{split}
\end{equation}
where  in the first inequality we used that $\g$ is 1-Lipschitz and in the second one we used \eqref{Kh}. We need to estimate the term
\begin{equation}\label{eq:first_term}
    \sum_{|j|\le R/8} K^\e_j \g(|i-j|-R).
\end{equation}
Recalling \eqref{eq:Taylor}, for $i,j\in\ez$ with  $|i|\ge R/4$ and $|j|\le R/8$,   {by a Taylor expansion} it holds 
\[
    |i\pm j|\le |i| \pm \frac{i\cdot j}{|i|} + \frac1{|i|-|j|}\frac{|j|^2}2 \le |i| \pm \frac{i\cdot j}{|i|} + \frac 4R |j|^2.
\] 
Therefore, we  can rewrite the term \eqref{eq:first_term} as follows:
\begin{equation}
    \begin{split}
        \sum_{|j|\le R/8} K^\e_j \g(|i-j|&-R) =\frac 12  \sum_{|j|\le R/8} K^\e_j (\g(|i-j|-R) +\g(|i+j|-R) ) \\
    &\le \frac 12  \sum_{|j|\le R/8} K^\e_j (\g(|i|-R -\tfrac{j\cdot i}{|i|} + \tfrac{4}{R} |j|^2 ) +\g(|i|-R+\tfrac{j\cdot i}{|i|} + \tfrac{4}{R}|j|^2) ).\label{eq:mult_g}
    \end{split}
\end{equation}
Let $x,y\in \R$. Since $\g''$ is bounded from above, by a Taylor development we get 
\[
    \g( x+y)  = \g(x) +\g'(x)\, y + \frac{|y|^2}2 \int_0^1\g''(x +  sy)(1-s)\,ds\le \g(x) +\g'(x)\, y + c|y|^2.
\]
We  thus use this estimate to get  
\begin{multline*}
    \g(|i|-R + \tfrac{4}{R} |j|^2 \pm \tfrac{j\cdot i}{|i|}  ) \le  \g(|i|-R+ \tfrac{4}{R} |j|^2) \pm \frac{j\cdot i}{|i|}\, \g'(|i|-R+ \tfrac{4}{R} |j|^2) + c |j|^2\\
    \le    \g(|i|-R  ) \pm \frac{j\cdot i}{|i|} \, \g'(|i|-R+ \tfrac{4}{R} |j|^2) + |j|^2 (c+\tfrac 4{R})
\end{multline*}
where in the last inequality we used the 1-Lipschitz property. Plugging the inequality above in \eqref{eq:mult_g} and using that   $|i|\ge R/2$ and $R\le 1$, we have bounded the term \eqref{eq:first_term} as follows 
\[
\sum_{|j|\le R/8} K^\e_j \g(|i-j|-R)\le \sum_{|j|\le R/8} K^\e_j \g(|i|-R) + \frac{ch}R,
\]
where $c$ is a positive constant.  
Therefore, by \eqref{eq:est_v_g} we get 
\[
(K^\e\ast v)_i \le \g(|i|-R) +\frac{ch}R,
\]
where $c$ denotes a positive constant.  Since $|i|\in(R/4,R/2)$ then $\g'(|i|-R)\ge \g'(-R_0)\ge 1/2$, and therefore 
\[
(K^\e\ast v)_i \le \g(|i|-R) + 2c \frac{h}R\g'(|i|-R)\le \g(|i|-R+ \tfrac{2ch}R),
\]
where the last inequality holds by convexity as long as  $  |i|-R+\tfrac{2ch}R\le 0$, which holds for $h$ small.
\end{proof}

 \subsection{Consistency }
 The link between the algorithm \eqref{eq:alg_sdg} and the motion by mean curvature is provided by   the following variant of  Soner's characterization of  the mean curvature flow  \cite{Soner93}.
\begin{lemma}\label{lem:AC}
Let $\sigma>0$ and consider $E\subset \R^N\times [0,T)$ for  $T>0$, a closed set. Set $d(x,t):= \dist(x,E(t))$ for $x\in \R^N$, $t\in [0,T)$,
where $E(t) := \{x\in \R^N: (x,t)\in E\}$.
Then
  $d$ is  a viscosity supersolution to the heat equation
      in $\comp{E} \cap \{t>0\}$
      if and only if $u(x,t) := \gamma(d(x,t)/\sigma)$ is a
  viscosity supersolution to
  the Allen-Cahn equation:
  \begin{equation}\label{eq:AC}
    \partial_t u = \Delta u - \frac{1}{\sigma^2} W'(u)
  \end{equation}
  in  the same set.
\end{lemma}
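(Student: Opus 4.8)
The plan is to reduce both statements to second-order parabolic subjets (in the sense of~\cite{CIL}) and to exploit that the rescaled profile $g(s):=\gamma(s/\sigma)$ is a smooth, strictly increasing diffeomorphism of $\R$ onto $(-1,1)$ with smooth inverse, with $g'=\tfrac1\sigma\gamma'(\cdot/\sigma)>0$ everywhere (since $\gamma'=1-\gamma^2>0$), and which satisfies the profile equation in the rescaled form
\[
  g''(s)=\tfrac1{\sigma^2}\gamma''(s/\sigma)=\tfrac1{\sigma^2}W'(\gamma(s/\sigma))=\tfrac1{\sigma^2}W'(g(s)),
\]
using $\gamma''=W'(\gamma)$. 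The second ingredient is the purely geometric fact that, for each fixed $t$, the function $d(\cdot,t)=\dist(\cdot,E(t))$ is $1$-Lipschitz and, on $\R^N\setminus E(t)$, a viscosity solution of the eikonal equation $|\nabla d(\cdot,t)|=1$.

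First I would record the change-of-variables rule for jets. If $\psi$ is smooth and touches $d$ from below at $(x_0,t_0)\in\comp E\cap\{t>0\}$, then $g\circ\psi$ is smooth and, $g$ being increasing, touches $u=g(d)$ from below at $(x_0,t_0)$; writing $(a,p,X)$ for the (parabolic) jet of $\psi$ there and $d_0:=d(x_0,t_0)$, the chain rule gives $g\circ\psi$ the jet
\[
  (b,q,Y):=\big(g'(d_0)\,a,\ g'(d_0)\,p,\ g'(d_0)\,X+g''(d_0)\,p\otimes p\big).
\]
Running the same computation with $g^{-1}$ (smooth on $(-1,1)$, which contains the range of $u$) shows that this gives a bijection between $\mathcal P^{2,-}d(x_0,t_0)$ and $\mathcal P^{2,-}u(x_0,t_0)$; here one may reduce to quadratic test functions localised near the contact point, on which $u$ stays in a compact subinterval of $(-1,1)$.

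Next, freezing the time variable, any $(a,p,X)\in\mathcal P^{2,-}d(x_0,t_0)$ produces a subjet of $d(\cdot,t_0)$ at $x_0$, so $|p|\le1$ ($1$-Lipschitzianity) and $|p|\ge1$ (eikonal supersolution), hence $|p|=1$. Together with $g''(d_0)=\tfrac1{\sigma^2}W'(g(d_0))$ this gives, for any matching pair of jets,
\[
  \operatorname{tr}(Y)-\tfrac1{\sigma^2}W'(g(d_0))=g'(d_0)\operatorname{tr}(X)+g''(d_0)|p|^2-\tfrac1{\sigma^2}W'(g(d_0))=g'(d_0)\operatorname{tr}(X).
\]
Consequently the Allen--Cahn supersolution inequality for $u$ at $(x_0,t_0)$, namely $b\ge\operatorname{tr}(Y)-\tfrac1{\sigma^2}W'(u(x_0,t_0))$ for every $(b,q,Y)\in\mathcal P^{2,-}u(x_0,t_0)$, is equivalent (dividing by $g'(d_0)>0$) to $a\ge\operatorname{tr}(X)$ for every $(a,p,X)\in\mathcal P^{2,-}d(x_0,t_0)$, i.e.\ to $d$ being a heat-equation supersolution at $(x_0,t_0)$. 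Since the jet correspondence is a bijection at every point of $\comp E\cap\{t>0\}$, both implications of the lemma follow at once.

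The main obstacle is the eikonal step --- one must be sure that \emph{every} parabolic subjet gradient of the distance function has unit norm --- together with the bookkeeping needed to make the jet bijection fully rigorous while keeping test functions valued in $(-1,1)$. The degenerate case $E(t)=\emptyset$ (where $d\equiv+\infty$, $u\equiv1$, and both sides hold vacuously/classically near such points) is treated separately, and everything else is the routine chain-rule computation sketched above.
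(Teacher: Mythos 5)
Your proof is correct and follows essentially the same route as the paper's: the monotone change of variables through the profile $\gamma$, the identity $|\nabla\psi|=1$ at smooth contact points with the distance function (which the paper invokes as standard and you justify via the eikonal equation plus $1$-Lipschitz continuity), and the ODE $\gamma''=W'(\gamma)$ are exactly the three ingredients used there. The only difference is presentational: you phrase the argument as a bijection of parabolic subjets, which handles both implications simultaneously, whereas the paper runs the test-function computation in one direction and notes that the converse is identical.
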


\begin{proof}
The proof is standard and follows from the fact that if $\varphi(x,t)$
is  a smooth test function touching from below the graph of $u(x,t)$
at $(\bar x,\bar t)$, then $\varphi(\bar x,\bar t)\in (-1,1)$ so that
in a neighborhood of $(\bar x,\bar t)$, $\tilde\varphi(x,t):= \sigma\gamma^{-1}( \varphi(x,t))$ is a smooth
test function touching from below the graph of $d(x,t)$. Assuming that $d$ is a viscosity supersolution in $\{d>0\}$, one has
\[
\partial_t\tilde\varphi(\bar x,\bar t)\ge \Delta\tilde\varphi(\bar x,\bar t).
\]
Moreover, it holds   $\varphi(x,t)=\gamma(\tilde{\varphi}(x,t)/\sigma)$ and, at  $(\bar x,\bar t)$ we have 
\[\partial_t \varphi=\frac{1}{\sigma}\gamma'\left(\frac{\tilde \varphi}{\sigma}\right)\partial_t \tilde \varphi
\ge \frac{1}{\sigma}\gamma'\left(\frac{\tilde \varphi}{\sigma}\right)\Delta \tilde\varphi,
\]
since $\gamma$ is increasing. Now, note that 
\[
\Delta \varphi = \Div \left[\tfrac{1}{\sigma}\gamma'(\tfrac{\tilde \varphi}{\sigma})\nabla_x \varphi\right]
= \frac{1}{\sigma^2}\gamma''\left(\frac{\tilde \varphi(x,t)}{\sigma}\right)|\nabla_x \varphi|^2
+
\frac{1}{\sigma}\gamma'\left(\frac{\tilde \varphi}{\sigma}\right)\Delta \tilde \varphi.
\]
Since $d(\cdot,\bar t)$ is a distance function, it is standard that 
a smooth contact from below at $x=\bar x$ implies that
$|\nabla_x \varphi(\bar x,\bar t)|= 1$. Thus, using
$\gamma''=W'(\gamma)$ we conclude 
\[
\frac{1}{\sigma}\gamma'\left(\frac{\tilde \varphi}{\sigma}\right)\Delta \tilde\varphi = 
\Delta\varphi - \frac{1}{\sigma^2}W'\left(\gamma\left(\frac{\tilde \varphi}{\sigma}\right)\right)
= \Delta\varphi - \frac{1}{\sigma^2}W'(\varphi)
\]
The thesis follows. The reverse implication is shown in the same way.
\end{proof}
As a consequence of the previous lemma, if $u$ is a viscosity
super solution to~\eqref{eq:AC} in $\{u>0\}$, it is  a subsolution in $\{u<0\}$, and the set $\{u=0\}$ does not develop fattening, then $\{u\le 0\}$
evolves by mean curvature. 
We can show, however, a stronger result.
\begin{lemma}\label{lem:AC-heat}
Let $E\subset \R^N\times [0,T)$, $T>0$, be a closed set
and set $d(x,t):= \dist(x,E(t))$ for $x\in \R^N$, $t\in [0,T)$,
where $E(t) := \{x\in \R^N: (x,t)\in E\}$,
and $v(x,t)=\gamma(d(x,t)/\sigma)$ for some given $\sigma>0$.
Then if $v$ is a viscosity supersolution to the
heat equation in $\comp{E} \cap \{t>0\}$, then 
also $d$ is (and hence, by Lemma~\ref{lem:AC} $v$
is a supersolution to Allen-Cahn in the same set), 
and $E$ is a 
superflow for the mean curvature motion. 
\end{lemma}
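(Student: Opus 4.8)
The plan is to work entirely with the viscosity characterisation: first I would show that the hypothesis on $v$ forces $d$ to be a viscosity supersolution of a reaction--diffusion inequality carrying an \emph{unfavourable} zeroth order term, and then remove that term by a sliding argument exploiting the fact that a distance function is affine along its gradient ray.

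First I would record the identity behind the statement. Let $\chi$ be $C^{2,1}$ and touch $d$ from below at a point $(\bar x,\bar t)\in\comp{E}\cap\{t>0\}$; then $d_0:=d(\bar x,\bar t)>0$. Since $\gamma$ is increasing, $\psi:=\gamma(\chi/\sigma)$ touches $v=\gamma(d/\sigma)$ from below at $(\bar x,\bar t)$, so by hypothesis $\partial_t\psi(\bar x,\bar t)\ge\Delta\psi(\bar x,\bar t)$. I would then invoke the classical properties of the Euclidean distance function at such a contact point: $d(\cdot,\bar t)$ is $1$-Lipschitz and affine along the segment joining $\bar x$ to a nearest point $y\in E(\bar t)$, so that $\nu:=\nabla_x\chi(\bar x,\bar t)=(\bar x-y)/d_0$ is a unit vector and $d(\bar x+\tau\nu,\bar t)=d_0+\tau$ for all $\tau\ge-d_0$. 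Plugging $|\nabla_x\chi(\bar x,\bar t)|^2=1$ and the profile ODE $\gamma''=W'(\gamma)=-2\gamma\gamma'$ into $\Delta\psi=\frac1{\sigma^2}\gamma''(\chi/\sigma)|\nabla_x\chi|^2+\frac1\sigma\gamma'(\chi/\sigma)\Delta\chi$ and dividing by $\frac1\sigma\gamma'(d_0/\sigma)>0$, the inequality $\partial_t\psi\ge\Delta\psi$ becomes
\[
\partial_t\chi(\bar x,\bar t)\ge\Delta\chi(\bar x,\bar t)-g(d_0),\qquad g(r):=\tfrac2\sigma\gamma(r/\sigma)\ge 0.
\]
Equivalently, $d$ is a viscosity supersolution of $\partial_t d=\Delta d-g(d)$ in $\comp{E}\cap\{t>0\}$. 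The term $-g(d)$ has the wrong sign, so this is strictly weaker than the desired $\partial_t d\ge\Delta d$, and removing it is the crux.

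To that end, suppose for contradiction that $\partial_t\chi(\bar x,\bar t)<\Delta\chi(\bar x,\bar t)$ and set $\delta_0:=\Delta\chi(\bar x,\bar t)-\partial_t\chi(\bar x,\bar t)>0$. For $s\in[0,d_0)$ consider the translated test function $\chi^{(s)}(x,t):=\chi(x+s\nu,t)-s$. Since $d(\cdot,t)$ is $1$-Lipschitz one has $\chi^{(s)}(x,t)\le d(x+s\nu,t)-s\le d(x,t)$ near $\bar x-s\nu$, while the affine behaviour along the normal ray gives $\chi^{(s)}(\bar x-s\nu,\bar t)=d_0-s=d(\bar x-s\nu,\bar t)$; hence $\chi^{(s)}$ is an admissible test function touching $d$ from below at $(\bar x-s\nu,\bar t)\in\comp{E}\cap\{t>0\}$, a point where $d$ equals $d_0-s>0$. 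Since $\chi^{(s)}$ has the same space and time derivatives as $\chi$ at the corresponding point, the inequality of the previous step applied there reads $\delta_0\le g(d_0-s)$; letting $s\uparrow d_0$ and using $g(0)=0$ forces $\delta_0\le 0$, a contradiction. Hence $d$ is a viscosity supersolution of the heat equation in $\comp{E}\cap\{t>0\}$.

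Finally, Lemma~\ref{lem:AC} immediately upgrades this to: $v$ is a viscosity supersolution of the Allen-Cahn equation~\eqref{eq:AC} in the same set. Moreover $\partial_t d\ge\Delta d$ away from $E$ is exactly condition~(d) of Definition~\ref{def:distributionalsuperflow} (viscosity and distributional formulations being equivalent by~\cite[Appendix]{CMP17}), so together with the structural properties~(a)--(c) of $t\mapsto E(t)$ — which, for the set $E$ produced by the scheme~\eqref{eq:alg_sdg}, hold exactly as in Section~\ref{sec:consistexplicit} — the set $E$ is a superflow for the mean curvature motion. I expect the genuine difficulty to be the second step: the elementary identity of the first step is by itself too weak, and only the eikonal/affine structure of the distance function at the contact point, combined with the vanishing of the spurious reaction term at the interface, closes the sign gap.
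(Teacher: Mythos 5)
Your proof is correct. The first step — touching $d$ from below by $\chi$, transferring to $\psi=\gamma(\chi/\sigma)$, using $|\nabla_x\chi|=1$ at the contact point and $\gamma''/\gamma'=-2\gamma$ to obtain $\partial_t\chi\ge\Delta\chi-g(d_0)$ with $g(r)=\tfrac2\sigma\gamma(r/\sigma)\ge0$ — is exactly the computation in the paper's proof of Lemma~\ref{lem:AC-heat}. Where you diverge is in removing the spurious reaction term. The paper argues geometrically: from $\partial_t d\ge\Delta d-g(d)$ it reads off that each sublevel set $\{d\le s\}$ moves no slower than mean curvature minus the constant $g(s)$, re-applies Soner's characterization to conclude $\partial_t d\ge\Delta d-g(s)$ throughout $\{d>s\}$, and lets $s\downarrow0$. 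This is quick but leans on the (unproved there) equivalence between a velocity bound for the level sets and the differential inequality for their distance functions. Your replacement — translating the test function along the normal ray, $\chi^{(s)}(x,t)=\chi(x+s\nu,t)-s$, checking via the $1$-Lipschitz bound and the affine behaviour of $d$ on the segment $[y,\bar x]$ that it touches $d$ from below at $(\bar x-s\nu,\bar t)$ with the \emph{same} derivatives, and then sending $s\uparrow d_0$ so that $g(d_0-s)\to g(0)=0$ — is a self-contained viscosity argument that proves the same limit $s\to0$ statement pointwise without invoking the level-set machinery. The only cosmetic slip is the claim $d(\bar x+\tau\nu,\bar t)=d_0+\tau$ for all $\tau\ge-d_0$, which is guaranteed only for $\tau\in[-d_0,0]$; since you only use $\tau=-s\le0$, nothing is affected. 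As in the paper, the final passage from the supersolution inequality for $d$ to ``$E$ is a superflow'' tacitly assumes the structural conditions (a)--(c) of Definition~\ref{def:distributionalsuperflow}, which you correctly flag as coming from the construction of $E$ in the scheme rather than from the lemma's hypotheses.
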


\begin{proof}
Indeed, if $\varphi$ touches the graph
of $d$ from below at some point $(\bar x,\bar t)\in\{ d>0 \}$,
we have $\tilde\varphi:=\gamma(\varphi/\sigma)$ touches
the graph of $u$ from below at the same point and
\[
\partial_t\tilde\varphi(\bar x,\bar t)\ge \Delta\tilde\varphi(\bar x,\bar t).
\]
In addition,  $|\nabla_x\varphi(\bar x,\bar t)|=1$.
Hence, as before, at $(\bar x,\bar t)$ it holds 
\[
 \partial_t \varphi \ge \Delta \varphi + \frac{1}{\sigma}\frac{\gamma''(\varphi/\sigma)}{\gamma'(\varphi/\sigma)}
\]
Note that $\gamma''/\gamma' = W'(\gamma)/\sqrt{2W(\gamma)}
= (\sqrt{2W})'(\gamma)$, that is, with our choice, 
$-2\gamma$, hence
\[
\partial_t \varphi \ge \Delta \varphi - \tfrac{2}{\sigma}\tilde\varphi.
\]
We recall that, still with our choice, $\gamma(s) = \tanh(s )$, so that the last term
is $\frac{2}{\sigma}\tanh(\tfrac{1}{\sigma} d(\bar x,\bar t))$
and thus  $d$ is a   {viscosity} supersolution of
\[
\partial_t d\ge \Delta d - \sqrt{2}\frac{d}{\sigma^2} \quad \text{ in }\{d>0\}.
\]
Yet this implies that $d$ is also a supersolution of the
heat flow in $\{d>0\}$.   {Indeed, for every $s>0$ the function $d$ is a viscosity supersolution to $\partial_t d\ge \Delta d - \sqrt{2}\frac{s}{\sigma^2}$ in $\{0<d<s\}$. By classical computations (see for instance \cite[p.~366]{Soner93}), it then holds that $\partial_t d\ge \Delta d - \sqrt{2}\frac{s}{\sigma^2}$ in $\{d>0\}$, in the viscosity sense.} 
The claim follows.
\end{proof}

Starting from the algorithm \eqref{eq:alg_sdg}, the idea is then to define 
\[
v^{k,\e}:=\g(u^{k,\e})
\]
and characterize the  evolution of the function $v^{k,\e}$. Indeed, for every $j$ in the region where $\sd^{k+1,\e}>0$ it holds  
\[
  \frac{v^{k+1,\e}_j-v^{k,\e}_j}{h} \ge \frac{\g\Big( \g^{-1} ( K^\e \ast v^{k,\e})_j\Big)-v^{k,\e}_j}{h}    = \left(\frac{K^\e- \delta}{h}*v^{k,\e}\right)_j,
\]
where $\delta = (\delta_j)_{j\in\e\Z^N}$ is defined by $\delta_0=1$,
$\delta_j=0$ for $j\neq 0$. We write this inequality as
\[
  \frac{v^{\e}(t+h)-v^{\e}(t)}{h} \ge \frac{K^\e- \delta}{h}*v^{\e}(t)
\]
for any $t\ge 0$ where $v^\e(t) := v^{[t/h],\e}$.
Thanks to the estimate in Lemma~\ref{lem:evol_balls_g},  reasoning as in  Section~\ref{sec:proof} we obtain the same compactness
as in~\cite[Prop.~4.4]{CMP17} for the function $d^\e:=u^{[t/h],\e}$. Therefore, up to a subsequence,  $d^\e$ converges
locally uniformly in space and for all times but a countable number (and until an extinction time $T^*\in (0,+\infty]$), to
$d(x,t)=\dist(x,E(t))-\dist(x,\comp{A}(t))$, where $E$ is the Kuratowski limit of $\{(j,t): d^\e(t)_j\le 0\}$
and ${A}\subset E$ the complement of the Kuratowski limit of $\{(j,t): d^\e(t)_j\ge 0\}$. Moreover, up to a subsequence, by definition the function $v^\e$ converges locally uniformly in space and for all times but a countable number to $\g(d)$, with $d$ as above.

From this, we can follow the proof of Section~\ref{sec:proof} to conclude that 
\[
\partial_t \g(d)\ge \Delta \g(d)
\]
in $E^\complement$, in the distributional sense. Using Lemma~\ref{lem:AC-heat} we then conclude that $\g(d)$ is a supersolution of the Allen-Cahn equation, and thus $d$ is a supersolution of the heat equation. This means that $E$ is a superflow. We have shown the following result.
\begin{theorem}\label{th:mainnonlin}
  Let $E^0\subseteq \R^N$ and consider the algorithm \eqref{eq:alg_sdg}. As $\e\to 0$, the function $d^\e$, defined for $t\ge 0$ and $i\in\e\Z^N$
  by $d^\e(t)_i = u^{[t/h],\e}_i$, converge up to subsequences,
  for almost all times and locally uniformly in space to a function
  $d(x,t)$ such that $d^+=\max\{d,0\}$ is the distance function
  to a supersolution of the (generalized)
  mean curvature flow starting from $E^0$,
  and $d^-$ is the distance function to a supersolution starting from
  $\comp{(E^0)}$.
\end{theorem}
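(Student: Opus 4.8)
I would follow closely the proof of Theorem~\ref{th:mainimplicit}, transporting it through the nonlinearity $\gamma$ by working throughout with the composed profiles $v^{k,\e}:=\gamma(u^{k,\e})$ instead of with $u^{k,\e}$ itself. Since $0\le\gamma'=1-\gamma^2\le1$, each $v^{k,\e}$ is simultaneously $1$-Lipschitz and valued in $(-1,1)$; this last boundedness is what makes the final passage to the limit actually \emph{easier} than in Section~\ref{sec:proof}. The first point is a discrete one-sided inequality. Fix $k$ and a node $j$ with $\sd^{k+1,\e}_j>0$ and set $w:=\gamma^{-1}(K^\e\ast v^{k,\e})$, which is finite-valued (because $K^\e\ast v^{k,\e}$ is a convex combination of values in $(-1,1)$, hence lies in $(-1,1)$) and $1$-Lipschitz (as observed right before Lemma~\ref{lem:evol_balls_g}). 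The last assertion of Lemma~\ref{lem:defsd} gives $w_j\ge0$, its point~(2) gives $u^{k+1,\e}_j=\sd^+[w]_j\ge w_j$, and monotonicity of $\gamma$ then yields $v^{k+1,\e}_j\ge\gamma(w_j)=(K^\e\ast v^{k,\e})_j$, i.e.
\[
  \frac{v^{k+1,\e}_j-v^{k,\e}_j}{h}\ \ge\ \left(\frac{K^\e-\delta}{h}\ast v^{k,\e}\right)_j
  \qquad\text{wherever }\sd^{k+1,\e}_j>0 .
\]
Because $\gamma$ is odd and $W$ even, the same computation with initial datum $-u^{0,\e}$ produces the reverse inequality wherever $\sd^{k+1,\e}_j<0$.

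Next I would establish the time-compactness. Lemma~\ref{lem:evol_balls_g} provides the ball estimate $\sd^+[\gamma^{-1}(K^\e\ast\gamma(|\cdot|-R))]\le|\cdot|-R+ch/R$; iterating it, and using oddness of $\gamma$ to control balls also from below, reproduces --- exactly as in Section~\ref{sec:consistexplicit} --- the time-monotonicity estimate \eqref{eq:nondecreasing} for $d^\e(t):=u^{[t/h],\e}$. This is precisely the hypothesis needed for the compactness argument of \cite[Prop.~4.4]{CMP17}: up to a subsequence, $d^\e(t)$ converges locally uniformly in space, for all $t$ outside a countable set and up to an extinction time $T^*\in(0,+\infty]$, to $d(x,t)=\dist(x,E(t))-\dist(x,\comp A(t))$, where $E$ (resp.\ $\comp A$) is the Kuratowski limit of $\{d^\e(t)\le0\}$ (resp.\ $\{d^\e(t)\ge0\}$), $A\subset E$, $E(0)=E^0$, $A(0)=\mathring{E^0}$. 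Since $\gamma$ is bounded and continuous, $v^\e(t)=\gamma(d^\e(t))\to\gamma(d(\cdot,t))$ locally uniformly as well, with $|v^\e|\le1$ everywhere.

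Then I would pass to the limit. Fix $\eta\in C_c^\infty(\comp E\cap(\R^N\times(0,T^*));\R_+)$; for $\e$ small, $\eta>0$ forces $d^\e>0$, hence $v^\e>0$, on $\text{spt}(\eta)$, so the inequality above is available there. Testing against $\eta$, rearranging the sums in time and space, and letting $\e\to0$ exactly as in Section~\ref{sec:proof} --- the tail bound \eqref{eq:boundremainder} being now immediate since $|v^\e|\le1$, and the Plancherel/Fourier identification of $\tfrac{K^\e-\delta}{h}$ with $\Delta$ using \eqref{eq:approxsymbol} unchanged --- one gets $\partial_t\gamma(d)\ge\Delta\gamma(d)$ in $\mathcal{D}'(\comp E\cap(\R^N\times(0,T^*)))$. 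Lemma~\ref{lem:AC-heat} then upgrades this to: $d$ is a (viscosity) supersolution of the heat equation in $\comp E\cap\{t>0\}$, so $E$ is a superflow of the mean curvature motion with initial datum $E^0$ (the initial condition checked as in Section~\ref{sec:consistexplicit}). The reverse inequality, together with the symmetric versions of Lemmas~\ref{lem:AC} and~\ref{lem:AC-heat} applied to $-d$ (again using that $\gamma$ is odd, $W$ even), shows that $\comp A$ is a superflow starting from $\comp{(E^0)}$, i.e.\ $A$ is a subflow --- which is the assertion of the theorem; when the evolution from $E^0$ does not fatten, these two facts force $A=\mathring E$, $E=\overline A$ and $d(\cdot,t)=\sd_{E(t)}$.

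The only genuinely new difficulty, compared with Theorem~\ref{th:mainimplicit}, is entirely contained in the ball estimate Lemma~\ref{lem:evol_balls_g}, which has already been established: there one has to Taylor-expand $\gamma(|i\pm j|-R)$, use convexity of $\gamma$ on $\{\gamma\le0\}$ together with the lower bound $\gamma'(-R_0)\ge\tfrac12$ to absorb all error terms into $ch/R$, while simultaneously keeping the convolved profile negative throughout $B_{R/2}$. Granted that, the remaining points are routine: one needs only that $\gamma^{-1}(K^\e\ast\gamma(\cdot))$ preserves the $1$-Lipschitz class (checked before Lemma~\ref{lem:evol_balls_g}), and the observation that the Laplacian-consistency of the kernel survives composition with $\gamma$, which holds trivially because the limit passage of Section~\ref{sec:proof} is applied directly to the bounded profiles $v^\e=\gamma(d^\e)$.
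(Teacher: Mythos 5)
Your proposal is correct and follows essentially the same route as the paper's argument: working with $v^{k,\e}=\gamma(u^{k,\e})$, deriving the discrete one-sided inequality from Lemma~\ref{lem:defsd} and monotonicity of $\gamma$, obtaining time-compactness from Lemma~\ref{lem:evol_balls_g}, passing to the distributional limit as in Section~\ref{sec:proof}, and invoking Lemma~\ref{lem:AC-heat} to transfer the supersolution property from $\gamma(d)$ back to $d$. The only (welcome) additions are your explicit observation that $|v^\e|\le 1$ simplifies the tail estimate and your spelling out of the oddness of $\gamma$ for the symmetric statement.
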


\section{Applications: discrete heat flow, implicit Laplacian}\label{sec:applic}
In this section we list two examples of numerical schemes whose associated resolvent operator is a kernel satisfying the assumptions listed in Section \ref{sec:Kernel}. Note that the kernel associated to the explicit Euler scheme has been investigated in Section \ref{sec:algo_explheat}.
\subsection*{Implicit Laplacian scheme}
We start by considering the fully discrete implicit Euler's scheme that reads as follows. The idea is to use as kernel $K^\e$ 
associated to the resolvent operator for 
\begin{equation}\label{eq:impl_scheme}
    u - \tau\Delta_\e u=g\quad \text{ in }\ez.
\end{equation}
Note that if $g\in\ell^2(\ez)$, there exists a unique solution $u\in\ell^2(\ez)$ of the equation above, and a comparison principle holds. 
Rewriting \eqref{eq:impl_scheme} in the Fourier variable we deduce that  $K^\e$ satisfies
\[
    \widetilde{K^\e}(\xi)=\frac1{1-\tau\widetilde{\Delta_\e}},
\]
where, with a slight abuse of notation, $\widetilde{\Delta_\e}$ denotes the Fourier transform of the kernel associated to $\Delta_\e$.
Recalling that
\begin{equation}\label{eq:ker_Delta}
    \widetilde{\Delta_\e}(\xi)=\frac{2}{\e^2}\sum_{k=1}^N (\cos(2\pi\e  \xi\cdot e_k)-1),
\end{equation}
see  Remark \ref{rmk:expl_lapl}, we obtain
\[
    \widetilde{K^\e}(\xi)=\frac1{1-\frac{2\tau}{\e^2}\sum_{k=1}^N (\cos(2\pi\e  \xi\cdot e_k)-1)}.
\]
Note that the regularity of the Fourier transform of the kernel implies that $K^\e$ is rapidly decaying, so that $K^\e\ast d$ is well-defined when $d\in Lip(\ez)$.
Therefore, we can define our algorithm as 
\begin{equation*}
    \begin{split}
    &u^{k+1,\e} = K^\e\ast d^{k,\e}\\
    &\sd^{k+1,\e}=\sd[u^{k+1,\e} ],
\end{split}
\end{equation*}
where the first equation above implies that
\[
     u^{k+1,\e} - \tau\Delta_\e u^{k+1,\e}=d^{k,\e}\quad \text{ in }\ez.
\]
Positivity of $K^\e$ follows from the comparison principle holding for   \eqref{eq:impl_scheme},  while symmetry follows from the symmetry of $\widetilde{K^\e}$. Moreover, since  $\widetilde{K^\e}(0)=1$, it holds $\sum_{j\in\ez} K^\e_j=1$. It remains to check that the time-step $\tau(\e)$ coincides with the natural time-step $h(\e)$ defined in \eqref{Kh}.
In order to compute $h$, we recall (\ref{Kh}') i.e. $h=-\frac{\Delta \widetilde{K^\e}(0)}{8\pi^2 N}$. By a straightforward computation we check that 
\[
    \Delta \widetilde{K^\e}(0)=-8N\pi^2\tau,
\]
hence $h=\tau$. In particular, \eqref{Kh} is satisfied. Lastly, using the explicit expression of $\widetilde{K^\e}$, one can check that \eqref{eq:approxsymbol}  holds.

We conclude by noting that, while the hypothesis on the kernel $K^\e$ are satisfied for any $\tau(\e)=h(\e)$ going to 0 as $\e\to0$, Corollary \ref{lem:balls} requires $h(\e)\gtrsim \e^2$. In conclusion, if $h(\e)\gtrsim \e^2$,  the theory developed in the previous section applies and Theorem \ref{th:variantexplicit} holds.

\subsection*{Discrete heat flow scheme}
Another interesting example is the discrete heat flow. We  define $S_\e(t)$ the semigroup generated by the discrete Laplacian, that is $ S_\e(\cdot)u=v$ with $v$ solving
\begin{equation}\label{eq:heat_semidiscr}
    \begin{cases}
     v_t - \Delta_\e v=0\quad \text{ in }\ez\times[0,+\infty)\\
     v(0)=u,
   \end{cases}   
\end{equation}
for $u\in\ell^2(\ez)$.
We consider a parameter $\tau=\tau(\e)$ with $\tau\to 0$, and define the kernel  $K^\e$ as satisfying $S_\e(\tau)u=K^\e\ast u$ for every $u\in\ell^2(\ez)$. 
To find a solution to the equation \eqref{eq:heat_semidiscr}, we pass to the Fourier variable in space 
\[
    \dot{\widetilde{v}}(\xi,t)- \widetilde{\Delta_\e}(\xi) \widetilde v(\xi,t)=0, \quad \text{ in }[0,\tfrac1\e)^N\times [0,+\infty),
\]
where  $\widetilde{\Delta_\e}$ is defined in \eqref{eq:ker_Delta}.
Therefore, we find $\widetilde{S_\e(\tau)u}(\xi)=\widetilde{v}(\xi,\tau)=\widetilde{K^\e}\widetilde{u}(\xi)$ where 
\[
    \widetilde{K^\e}(\xi)=\exp\left(\frac{2\tau}{\e^2}\sum_{k=1}^N (\cos(2\pi\e  \xi\cdot e_k)-1)\right).
\]
Then positivity of $K^\e$ follows from the comparison principle, symmetry and condition~\eqref{Kone} (equivalent to $\widetilde{K^\e}(0)=1$) are easily checked, while an explicit computation of $\Delta \widetilde{K^\e}(0)$, arguing as for the previous example, yields $h=\tau$, and thus also \eqref{Kh} is satisfied. Similarly, condition \eqref{eq:approxsymbol} is a simple computation using that $h=\tau$.

Again, note that the regularity of  $\widetilde{K^\e}$ implies that $K^\e$ is rapidly decaying, so that $K^\e\ast d$ is well-defined when $d\in Lip(\ez)$. Therefore, we can define our algorithm
\begin{equation*}
    \begin{split}
    &u^{k+1,\e} = K^\e\ast d^{k,\e}\\
    &\sd^{k+1,\e}=\sd[u^{k+1,\e} ].
\end{split}
\end{equation*}
As before,  if $h(\e)\gtrsim \e^2$,  the theory developed in the previous section applies and Theorem~\ref{th:variantexplicit} holds.

\section{Numerics}\label{sec:numerics}
We now present some numerical experiments performed using the algorithm \eqref{eq:defalgo}:
we have implemented~\eqref{eq:impl_scheme}, using
\texttt{libfftw3}\footnote{See~\cite{fftw} and \url{https://www.fftw.org/}.} in C,
in order to efficiently solve the implicit equation which is
diagonalized in Fourier   {domain} (we use in practice a Neumann Laplace operator which is diagonalized with the DCT).

\begin{figure}[htb]
\includegraphics[width=0.5\textwidth]{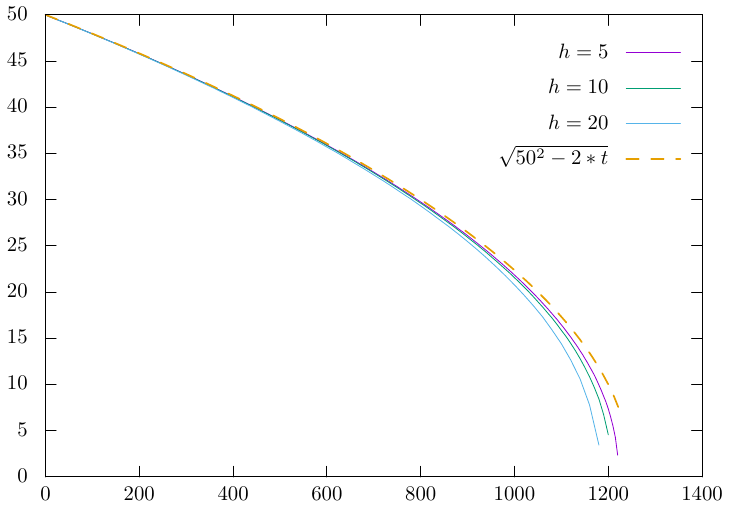}
\caption{Evolution of the radii for an initial disk of radius $50$,
$\e=1$, and $h=5, 10, 20$ (the explicit scheme needs $h=.25$).
The precision remains quite good for small values of $h$.
Compare with Figure~\ref{fig:Explicit}, right.}\label{fig:Implicit}
\end{figure}

\begin{figure}[htb]
\includegraphics[width=0.24\textwidth]{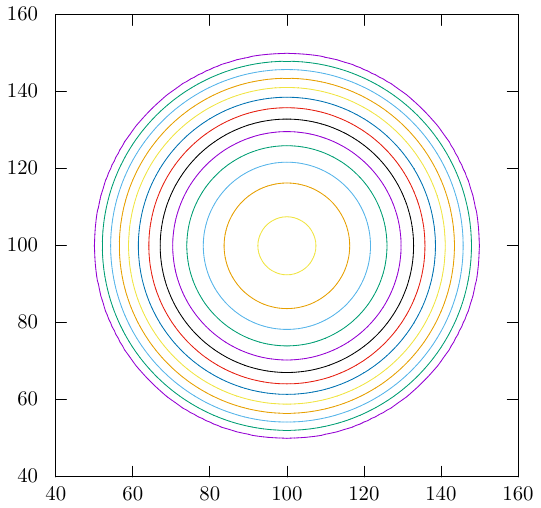}\hfill 
\includegraphics[width=0.24\textwidth]{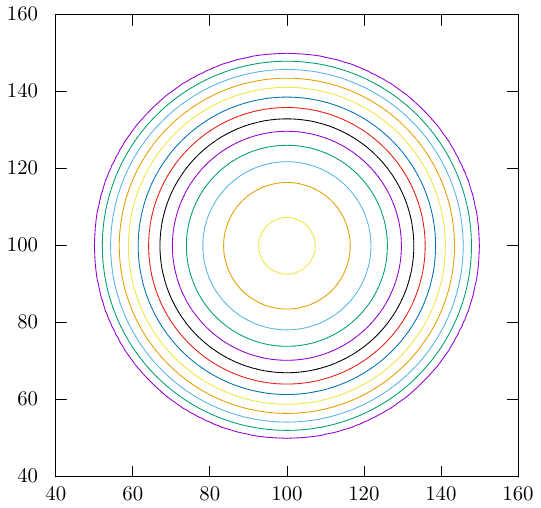}\hfill 
\includegraphics[width=0.25\textwidth]{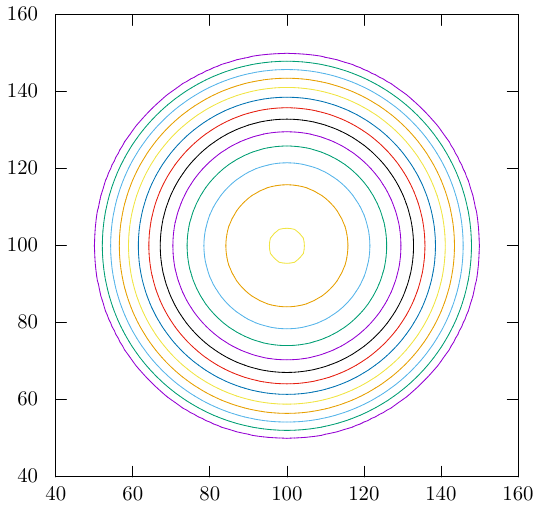}\hfill 
\includegraphics[width=0.25\textwidth]{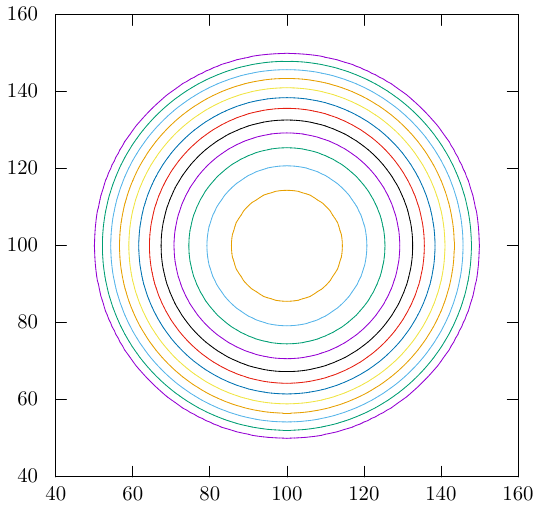}
\caption{Evolutions of a disk of radius $50$
with $h=.25$ (explicit scheme as in Fig.~\ref{fig:Explicit})
and $h=5,10,20$ (implicit scheme~\eqref{eq:impl_scheme}).}\label{fig:ImplicitDisks}
\end{figure}

We also compared the flows starting from the Mask pattern of
Figure~\ref{fig:ExplicitMask}, running the explicit scheme ($h=.25$) and the implicit one for $h=5$: Figure~\ref{fig:ImplicitMasks} shows that the differences are hardly visible except at the smaller scales.

\begin{figure}[htb]
\includegraphics[width=0.24\textwidth]{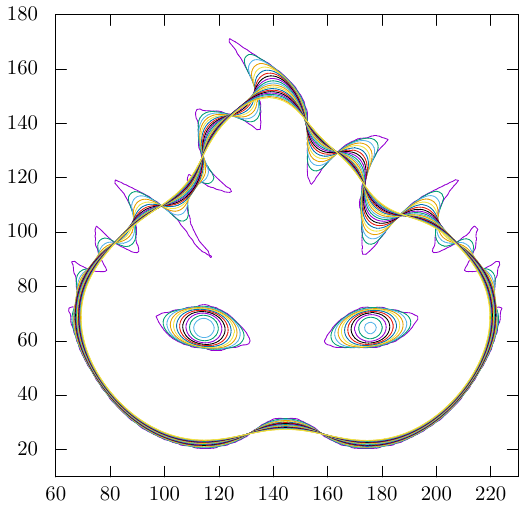}\hfill 
\includegraphics[width=0.24\textwidth]{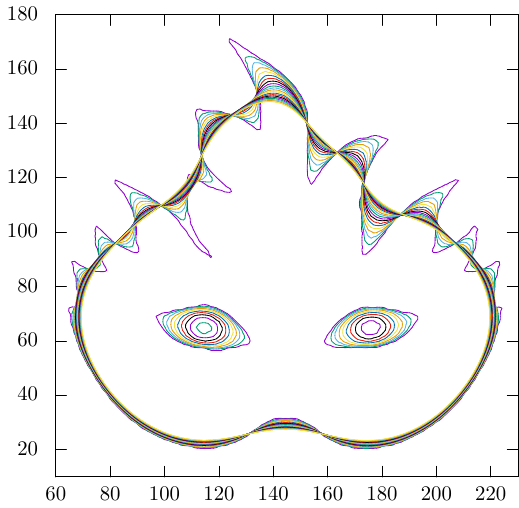}\hfill 
\includegraphics[width=0.25\textwidth]{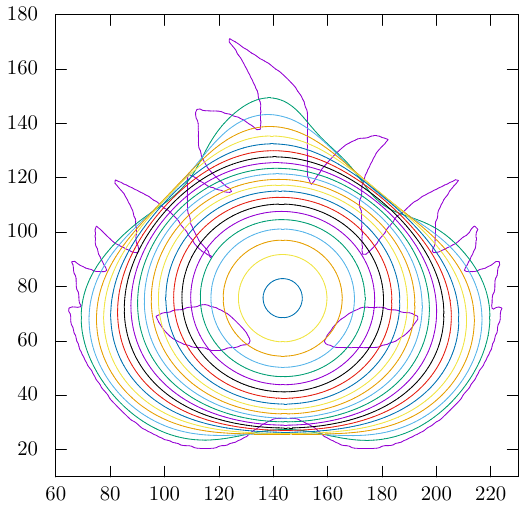}\hfill 
\includegraphics[width=0.25\textwidth]{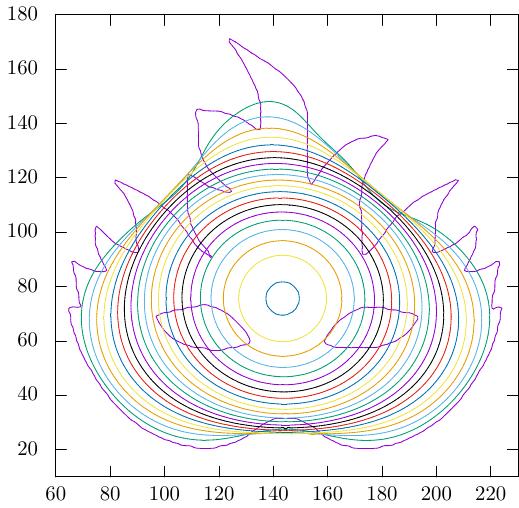}
\caption{Evolutions of the mask pattern, comparison between the explicit and implicit schemes. Left two plots: explicit vs implicit ($h=5$) evolution, times $0,5,\dots,100$; Right two plots: same for times $0,100,200,\dots$}\label{fig:ImplicitMasks}
\end{figure}

\section{Concluding remarks}\label{sec:concluding}

\subsection{Restricting the computations on a strip}\label{sec:strips}
In this subsection we consider the following modification of the redistancing operations $\ud^\pm,\sd^\pm$ defined in \eqref{eq:sdp}, \eqref{eq:sdm}. Consider a (fixed) positive parameter $M>0$, and assume that $M>\e$. Given $E\subseteq \ez$, we  define $S_M E:=\{ i\in E : \exists k\in \ez\setminus E, |i-k|\le M \}$.

  {We  consider $u:\ez\to\R$    1-Lipschitz. For every  $i\in\ez$  we set:}
\begin{equation}\label{eq:sdM}
  \begin{cases}
    \ud^{M,+}[u]_i := \inf_{j\in S_M\{u<0\}  }   {(} u_j + |j-i|  {)}\,,\\
    \sd^{M,+}[u]_i := \sup_{j\in S_M\{u\ge 0\}}   {(} \ud^{M,+}_j - |j-i|   {)}
  \end{cases}
\end{equation}
and analogously  $\ud^{M,-},\sd^{M,-}$.  Clearly, the %two
functions defined above are both 1-Lipschitz. By the same arguments used in Lemma \ref{lem:defsd}, one can show that $\ud^{M,+}[u] = u$ in $S_M\{u<0\}$ and $\ud^{M,+}[u] \ge  u$ in $\{ u\ge 0\}$, thus $\sd^{M,+}[u] = \ud^{M,+}\ge u$ in $S_M\{u\ge 0\}$ and $\sd^{M,+}[u] \le  \ud^{M,+}[u] \le u$ in $S_M\{u< 0\}$.
In particular,
\begin{equation}\label{eq:sdMu}
    \sd^{M,+}[u] \le u \text{ in } S_M\{u< 0\}, \quad \sd^{M,+}[u] \ge u \text{ in } S_M\{u\ge  0\},
\end{equation}
showing a local comparison between $\sd^{M,+}[u]$ and $u$ in the ``strip'' 
\[
    S_M\{u< 0\}\cup S_M\{u\ge 0\}, 
\]
instead of the global one satisfied by $\sd^+[u]$. As for definitions \eqref{eq:sdp}, \eqref{eq:sdm}, we note that   \eqref{eq:sdM} is coherent (i.e., the inequalities in \eqref{eq:sdMu} hold)  even if $\{u<0\}=\emptyset$ or $\{u>0\}=\emptyset$.

Furthermore, we can show that this  redistancing operation relates well with $\sd^+[u]$, at least in the strip.
\begin{lemma}\label{lem:compsdsdM}
Given   {$u:\ez\to\R$} 1-Lipschitz, for every $ i\in S_M\{u< 0\}\cup S_M\{u\ge 0\}$ it holds
\begin{align}
    &\sd^{M,+}[u]_i\le \sd^{+}[u]_i +  \frac{4N}M\e^2,\label{eq:sdMone}\\
    &|\sd^{M,+}[u]_i-d_i|\le   {(4\sqrt N+1)}\e,\label{eq:sdMtwo}
\end{align}
where $d=\min_{j\in \{u\ge 0\}}|i-j|-\min_{j\in \{u<0\}}|i-j|. $
\end{lemma}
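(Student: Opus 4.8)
The plan is as follows. Throughout, let $\bar u$ be the $1$-Lipschitz extension of $u$ to $\R^N$ obtained by inf-convolution with $|\cdot|$, so that $\bar u=u$ on $\ez$ and $|\bar u(x)-\bar u(y)|\le|x-y|$; write $d^-_i:=\dist(i,\{u<0\})$ and $d^+_i:=\dist(i,\{u\ge 0\})$, so that (with the natural sign convention) the quantity $d_i$ of the statement is $d^-_i-d^+_i$, equal to $d^-_i$ when $u_i\ge 0$ and to $-d^+_i$ when $u_i<0$. By the symmetry of \eqref{eq:sdM} under exchanging $\{u<0\}$ and $\{u\ge 0\}$ it suffices to treat a point $i$ with $u_i\ge 0$ (hence $i\in S_M\{u\ge 0\}$), the case $u_i<0$ being dealt with by the same computations applied to the outer supremum in \eqref{eq:sdM}. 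For such $i$ I would repeatedly use the identity, already observed in the text, $\sd^{M,+}[u]_i=\ud^{M,+}[u]_i=\inf_{j\in S_M\{u<0\}}\bigl(u_j+|j-i|\bigr)$, obtained by taking $i$ itself in the outer supremum.

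For \eqref{eq:sdMtwo}: for the upper bound, pick a lattice point $\bar\jmath\in\{u<0\}$ with $|\bar\jmath-i|=d^-_i$; since $i\in S_M\{u\ge 0\}$ forces $d^-_i\le M$, we get $\dist(\bar\jmath,\{u\ge 0\})\le|\bar\jmath-i|\le M$, so $\bar\jmath\in S_M\{u<0\}$ and $\sd^{M,+}[u]_i\le u_{\bar\jmath}+d^-_i<d^-_i=d_i$. For the lower bound, fix $j\in S_M\{u<0\}$. If $|u_j|\le\tfrac{\sqrt N}{2}\e$ then, using $|j-i|\ge d^-_i$, one has $u_j+|j-i|\ge d^-_i-\tfrac{\sqrt N}{2}\e$. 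Otherwise, along $\gamma(t)=(1-t)i+tj$, $t\in[0,1]$, the map $t\mapsto\phi(t):=\bar u(\gamma(t))+t|j-i|$ is nondecreasing, since its increments are $\ge-|\gamma(t')-\gamma(t)|+(t'-t)|j-i|=0$; for $\delta\in(0,|u_j|-\tfrac{\sqrt N}{2}\e)$ there is $t_\delta\in(0,1)$ with $\bar u(\gamma(t_\delta))=-\tfrac{\sqrt N}{2}\e-\delta$ (as $\bar u(\gamma(0))=u_i\ge 0$ and $\bar u(\gamma(1))=u_j$), and rounding $\gamma(t_\delta)$ to a nearest lattice point $q$ gives $u_q=\bar u(q)\le-\delta<0$, so $q\in\{u<0\}$ and $|\gamma(t_\delta)-i|\ge|q-i|-\tfrac{\sqrt N}{2}\e\ge d^-_i-\tfrac{\sqrt N}{2}\e$. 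Hence $u_j+|j-i|=\phi(1)\ge\phi(t_\delta)\ge d^-_i-\sqrt N\e-\delta$; letting $\delta\to 0$ and taking the infimum over $j$, $\sd^{M,+}[u]_i\ge d^-_i-\sqrt N\e$, which together with the upper bound gives $|\sd^{M,+}[u]_i-d_i|\le\sqrt N\e$.

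For \eqref{eq:sdMone}: since $\sd^{M,+}[u]\le\ud^{M,+}[u]$ and $\sd^{+}[u]_i=\ud^{+}[u]_i$ when $u_i\ge 0$, it is enough to bound $\ud^{M,+}[u]_i$ above by $\ud^{+}[u]_i+\tfrac{4N}{M}\e^2$. Pick a near-minimiser $j^\star\in\{u<0\}$ of $\ud^{+}[u]_i=\inf_{j\in\{u<0\}}(u_j+|j-i|)$; if $j^\star\in S_M\{u<0\}$ there is nothing to prove. Otherwise $\dist(j^\star,\{u\ge 0\})>M$, so $B(j^\star,M)$ contains no lattice point with $u\ge 0$ and, since $u_i\ge 0$, $|j^\star-i|\ge M$. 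One then slides $j^\star$ along the segment towards $i$ until reaching the point $\sigma^\star$ with $\dist(\sigma^\star,\{u\ge 0\})=M-\sqrt N\e$ (for $\e$ small), and replaces $j^\star$ by a nearest lattice point $j$ to $\sigma^\star$, which then lies in $S_M\{u<0\}$. Because the relevant pieces of $\partial\{u<0\}$ sit at distance $\gtrsim M$ from $\sigma^\star$ while the Euclidean barrier $x\mapsto|x-i|$ has curvature $\le 1/|x-i|$ along the corresponding sphere, a second-order Taylor expansion of $|\cdot-i|$ exactly as in \eqref{eq:Taylor}, combined with the second-moment-type bookkeeping of the proof of Lemma~\ref{lem:ballsKh} — now with the length scale $M$ in the role of the ball radius $R$ — yields $u_j+|j-i|\le u_{j^\star}+|j^\star-i|+\tfrac{4N}{M}\e^2$; hence $\ud^{M,+}[u]_i\le\ud^{+}[u]_i+\tfrac{4N}{M}\e^2$, which is \eqref{eq:sdMone}. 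The case $u_i<0$ follows by the symmetric argument on the outer supremum in \eqref{eq:sdM}, together with the monotonicity/rounding estimate above applied to the indices $j\in S_M\{u\ge 0\}$.

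The step I expect to be the main obstacle is the last one: making the ``sliding into the strip'' in the proof of \eqref{eq:sdMone} quantitative at the precise order $\e^2/M$. One has to check carefully that the near-boundary lattice points indeed belong to $S_M\{u<0\}$ and that the only genuine loss in replacing $j^\star$ by $j$ is the curvature term of $|\cdot-i|$, which requires reproducing the Taylor estimates of Lemmas~\ref{lem:balls}--\ref{lem:ballsKh} with some care (a naive ``move straight to $i$'' comparison only gives an $O(\sqrt N\e)$ loss, which is too crude here). By contrast, the $\sqrt N\e$ in \eqref{eq:sdMtwo} is just lattice rounding, and the remaining ingredients — the monotonicity of $\phi$, the reduction to $u_i\ge 0$, and the treatment of the outer supremum — are routine.
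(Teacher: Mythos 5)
Your overall strategy coincides with the paper's: reduce to the inner inf via $\sd^{M,+}[u]_i=\ud^{M,+}[u]_i$ on $S_M\{u\ge 0\}$, prove \eqref{eq:sdMone} by replacing an out-of-strip minimiser with an in-strip lattice point near the segment towards $i$ and controlling the triangle defect by the second-order expansion \eqref{eq:Taylor}, and get \eqref{eq:sdMtwo} from the fact that the nearest point realising $d_i$ automatically lies in the relevant strip. Your lower bound in \eqref{eq:sdMtwo} (the $1$-Lipschitz extension $\bar u$ and the monotonicity of $\phi(t)=\bar u(\gamma(t))+t|j-i|$) is a different and more self-contained route than the paper's, which for $i\in S_M\{u<0\}$ simply takes $j=\bar\jmath$ in the outer supremum to get $\sd^{M,+}[u]_i\ge -|i-\bar\jmath|$, and for $i\in S_M\{u\ge 0\}$ falls back on \eqref{eq:sdMone} together with \eqref{eq:compdist}; your argument actually delivers the clean constant $\sqrt N\e$ directly, so this part is fine (modulo the sign convention in the definition of $d$, which you correctly read as $\dist(\cdot,\{u<0\})-\dist(\cdot,\{u\ge0\})$).

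The genuine gap is exactly where you predicted it, and your proposed fix does not close it. You stop the slide at the point $\sigma^\star$ with $\dist(\sigma^\star,\{u\ge 0\})=M-\sqrt N\e$. Since $\dist(\cdot,\{u\ge0\})$ is only $1$-Lipschitz along the segment, this guarantees merely $|\sigma^\star-j^\star|\ge \dist(j^\star,\{u\ge0\})-(M-\sqrt N\e)$, which can be as small as $O(\e)$ when $j^\star$ sits just outside $S_M\{u<0\}$. In that regime the curvature correction $\bigl(\tfrac{1}{2|\sigma^\star-j^\star|}+\tfrac{1}{2|\sigma^\star-i|}\bigr)|j-\sigma^\star|^2$ is of order $N\e^2/\e=O(\sqrt N\e)$, i.e.\ exactly the ``naive'' loss you wanted to avoid, not $O(\e^2/M)$. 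The repair is to slide \emph{deep} into the strip: choose the intermediate lattice point $k$ with $\dist(k,\{u\ge0\})\in(M/4,3M/4)$ (still in $S_M\{u<0\}$, and $u_k<0$ since its distance to $\{u\ge0\}$ is positive). Then $|k-i|\ge\dist(k,\{u\ge0\})>M/4$ because $u_i\ge0$, and $|k-j^\star|\ge\dist(j^\star,\{u\ge0\})-\dist(k,\{u\ge0\})\ge M/4$, so both denominators in the Taylor defect are $\gtrsim M$ and one gets the claimed $\tfrac{4N}{M}\e^2$. This is precisely what the paper does by restricting the inner infimum to $S_{3M/4}\{u<0\}\setminus S_{M/4}\{u<0\}$ in the chain of inequalities bounding $\ud^{M,+}[u]_i-\ud^{+}[u]_i$ by $\sup_j\inf_k\bigl(|k-j|+|k-i|-|j-i|\bigr)$. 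With that correction (and writing out the dual estimate $\sd^+[u]_i\ge\sup_{j\in S_M\{u\ge0\}}\ud^+[u]_j-|j-i|\ge\sd^{M,+}[u]_i-\tfrac{4N}{M}\e^2$ for $i\in S_M\{u<0\}$, which you only gesture at), your proof matches the paper's.
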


\begin{proof}
   Let $i\in S_M\{u\ge 0\}$, then $\sd^{M,+}[u]_i=\ud^{M,+}[u]_i$ and $\sd^{+}[u]_i=\ud^{+}[u]_i$. Moreover, by definition it holds $\ud^+[u]\le \ud^{M,+}[u]$ on $\ez$. We thus estimate, using that $u_k-u_j\le |k-j|$,
   \begin{multline*}
        \ud^{M,+}[u]_i-\ud^{+}[u]_i\le \sup_{j\in \{u< 0\}} \inf_{k\in S_M\{u< 0\}}   {(} |k-j| +|k-i|-|j-i|   {)}\\
        \le  \sup_{j\in \{u< 0\}\setminus S_M\{u<0\} } \inf_{k\in S_{3M/4}\{u< 0\}\setminus S_{M/4}\{u< 0\}}    {(}
        |k-j| +|k-i|-|j-i|   {)} \le \frac{4N}{M} \e^2
   \end{multline*}
where we used 
\[
\sup_{j\in S_M\{u< 0\}}\inf_{k\in S_M\{u< 0\}}   {(} |k-j| +|k-i|-|j-i|   {)}=0, 
\]
and for the last inequality we reasoned as for the last inf in  equation \eqref{eq:d+est} (see the proof of \eqref{eq:triangle}). On the other hand, for $i\in S_M\{u< 0\}$ by definition of $\sd^+[u]$ and the inequality above, we have 
\[ \sd^{+}[u]_i\ge \sup_{j\in S_M\{ u\ge 0\}}   {(} \ud^+[u]_j-|j-i|   {)}\ge \sd^{M,+}[u]_i - \frac{4N}{M} \e^2 \]
and we conclude \eqref{eq:sdMone}.
Concerning \eqref{eq:sdMtwo}, the bound
\[
\sd^{M,+}[u]_i\le d_i + \sqrt N \e
\]
follows from  \eqref{eq:compdist}, \eqref{eq:compdistm}.
The other estimate can be proved as follows.  Consider $i\in S_M\{u< 0\}$ and let $\bar j\in \{u\ge 0\}$ realising the $\min$ in the definition of $d_i$. Note that $\bar j\in S_M\{u\ge 0\}$, thus 
\[ \sd^{M,+}[u]_i-d_i= \sd^{M,+}[u]_i + |i-\bar j| \ge \ud^{M,+}[u]_{\bar j} \ge 0.\]
If instead $i\in S_M\{u\ge  0\}$, by the previous arguments $\sd^{M,+}[u]_i=\ud^{M,+}[u]_i\ge \ud^+[u]_i$. Thus the conclusion follows using \eqref{eq:sdMone}, \eqref{eq:compdist} and \eqref{eq:compdistm}.
\end{proof}

We can then modify the proposed algorithm \eqref{eq:defalgo} as follows: given $\sd^{0,M,\e} $ defined upon an initial set $E^0$, we set for all $k\in\N$:
\begin{equation}\label{eq:defalgoM}
    \sd^{k+1,M,\e} = \sd^{M,+}[K^\e \ast \sd^{k,M,\e} ].
\end{equation}
Following  Section \ref{sec:kernels}, we can establish the following result.
\begin{theorem}\label{th:strip}
Theorem \ref{th:mainexplicit} holds for $\sd^{k,M,\e}$ replacing $\sd^{k,\e}$.
\end{theorem}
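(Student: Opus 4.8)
The goal is to show that replacing the global redistancing $\sd^+$ by the localized version $\sd^{M,+}$ does not affect the convergence result, provided $M>\e$ is a fixed positive parameter. The overall strategy mirrors the proof of Theorem~\ref{th:mainimplicit}: I need (i) an estimate on the evolution of balls sufficient to obtain time-compactness of the discrete distances, and (ii) a consistency argument showing that the Kuratowski limit is a super/sub-flow in the sense of Definition~\ref{def:distributionalsuperflow}. The key observation is that Lemma~\ref{lem:compsdsdM} gives $\sd^{M,+}[u]_i\le \sd^+[u]_i+\frac{4N}{M}\e^2$ on the strip, and $\sd^{M,+}[u]_i\ge \sd^+[u]_i$ (since $\sd^+$ is a $1$-Lipschitz function lying below $u$ in $\{u<0\}$ while $\sd^{M,+}$ is built from a sub-collection of constraints, but the relevant point is the easy inequality $\ud^+\le\ud^{M,+}$ hence, where $u\ge 0$, $\sd^+=\ud^+\le\ud^{M,+}=\sd^{M,+}$, and a symmetric argument where $u<0$). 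Thus $\sd^{M,+}$ differs from $\sd^+$ by at most $C\e^2/M\lesssim\e^2\lesssim h$ on the strip — exactly the order of the error already present in the scheme.

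\textbf{Step 1: evolution of balls.} First I would establish the analogue of Corollary~\ref{lem:balls2}: if $\sd^{k,M,\e}_i\ge R>0$ at some $i$ and $R/2\le M$ so that the ball configuration lives within the strip, then $\sd^{k+1,M,\e}_i\ge |i|-R-\frac CR h-\frac{4N}{M}\e^2\ge |i|-R-\frac{C'}{R}h$ using $\e^2\le h$ and $M$ fixed (so $R\le R_0$ for the relevant radii, and $M\gtrsim R$ holds once $R$ is small; alternatively, one observes that the relevant comparison ball has radius $\le R_0$ and $M$ is a fixed constant, so $\e^2/M\le\e^2\le h$ unconditionally). This gives, by iteration, the monotonicity-type estimate \eqref{eq:nondecreasing} for $d^{\e}(t)=\sd^{[t/h],M,\e}$, hence the compactness statement of \cite[Prop.~4.4]{CMP17}: up to a subsequence $d^\e\to d$ locally uniformly in space for a.e.\ $t$, with $d^+(\cdot,t)=\dist(\cdot,E(t))$ and $d^-(\cdot,t)=\dist(\cdot,\comp{A}(t))$ via \eqref{eq:compdist}--\eqref{eq:compdistm} (which still apply, since by \eqref{eq:sdMu} the sign of $\sd^{M,+}[u]$ agrees with that of $u$ on the strip, and \eqref{eq:sdMtwo} controls the sub-pixel accuracy).

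\textbf{Step 2: consistency.} Here the distributional approach of Section~\ref{sec:proof} transfers almost verbatim. Given $\eta\in C_c^\infty(\comp{E}\cap(\R^N\times(0,T^*));\R_+)$, the support is at positive distance from $E$, hence from $E_\e$ for $\e$ small; in particular, on the support of $\eta$ the function $d^\e$ is bounded below by a positive constant, and — crucially — this means every point of the support sits in the ``positive side'' of the strip $S_M\{u^{k+1,\e}\ge 0\}$ provided the distance to $E_\e$ exceeds $M$... but $M$ is fixed while the support is at a \emph{fixed} positive distance, so this holds only if that distance exceeds $M$. To avoid this restriction I would instead use \eqref{eq:sdMu} directly: on $S_M\{u\ge 0\}$ one has $\sd^{M,+}[u]\ge u=K^\e*\sd^{k,\e}$, which yields the discrete supersolution inequality $\frac{\sd^{k+1,M,\e}_i-\sd^{k,M,\e}_i}{h}\ge\bigl(\frac{K^\e-\delta}{h}*\sd^{k,M,\e}\bigr)_i$ at every $i$ where $\sd^{k+1,M,\e}_i>0$ and $i$ lies in the strip around $\{u<0\}$. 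Since any point of $\mathrm{spt}\,\eta$ is at positive distance $>\!0$ from $E_\e$ and $d^\e$ is $1$-Lipschitz with bounded-below positive values there, such points are indeed within distance $M$ of the zero set once $\e$ is small (the zero set is nonempty and nearby, being the discrete interface), so $i\in S_M\{u<0\}$. The rest — multiplying by $\eta$, integrating, rearranging the convolution onto $\eta$, and passing to the limit using \eqref{eq:approxsymbol} exactly as in \eqref{eq:weakconvlap}--\eqref{eq:super} — is unchanged.

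\textbf{Main obstacle.} The delicate point is verifying that the relevant grid points genuinely lie in the localized strip $S_M\{u<0\}$ (resp.\ $S_M\{u\ge 0\}$) so that the local inequalities \eqref{eq:sdMu} and the comparison \eqref{eq:sdMone} are available where they are needed. For the ball estimate this is automatic because the argument only inspects points with $R/4\le|j|\le R/2$ and $|i|\approx R$, all within $O(R)\le O(M)$ of the interface once $R$ is small relative to the fixed $M$; for the consistency argument one must check that the test function's support, though at a fixed distance $\rho>0$ from the limit set $E$, is nonetheless within the $M$-strip of the \emph{discrete} interface — which is true precisely because the discrete sets $E_\e(t)$ Kuratowski-converge to $E(t)$ and, near any point of $\mathrm{spt}\,\eta$, the nearest discrete ``negative'' point is at distance $d^\e\le$ (a bound coming from the uniform convergence), and this bound can be taken $\le M$ after discarding large-distance regions via a cutoff as in \eqref{eq:boundremainder}. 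Once this bookkeeping is settled, the proof is a routine adaptation and the theorem follows.
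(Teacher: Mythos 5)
Your overall architecture (ball estimate for compactness, then distributional consistency) is the right one and coincides with the paper's, and Step~1 is essentially correct once the speed bound is stated with $R\wedge M$ in place of $R$: for $R>M$ one compares with translated balls of radius $M$, which is exactly how the paper obtains that one step applied to $B_R$ contains $B_{R-\frac{C}{R\wedge M}h}$ for every $R>0$. The genuine gap is in Step~2. The local inequality \eqref{eq:sdMu}, hence the discrete supersolution inequality, is only available at grid points lying in $S_M\{u<0\}\cup S_M\{u\ge 0\}$, i.e.\ within distance $M$ of the discrete interface. Your claim that every point of $\mathrm{spt}\,\eta$ is within distance $M$ of the zero set ``once $\e$ is small'' is false: if $\mathrm{spt}\,\eta$ sits at distance $\rho>M$ from $E$, then by the very uniform convergence you invoke, the nearest discrete negative point is at distance $\approx\rho>M$, so those grid points are outside the strip and \eqref{eq:sdMu} gives nothing there. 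The cutoff of \eqref{eq:boundremainder} removes tails of the convolution kernel; it does not remove the obligation to verify $\partial_t d\ge\Delta d$ against test functions supported in $\{d>M\}$, which Definition~\ref{def:distributionalsuperflow}(d) requires on all of $\comp{E}$. A second, related omission: the identification $d^+(\cdot,t)=\dist(\cdot,E(t))$ via \eqref{eq:sdMtwo} is likewise only valid in the strip; outside it $\sd^{M,+}$ need not resemble a distance function at all, so the limit of $\sd^{[t/h],M,\e}$ must first be replaced by the genuine distance to the limiting tube before the PDE can even be formulated there.

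The paper resolves both points by proving the inequalities only on $\{0<\ud<M\}$ (resp.\ $\{-M<\ud<0\}$), with $\ud$ redefined globally as the true signed distance to the limit tubes $E^M$, $A^M$, and then extending them to all of $(\R^N\times(0,+\infty))\setminus E^M$ (resp.\ $A^M$) by the standard propagation property of distance functions: if the distance to $E^M(t)$ is a supersolution of the heat equation in the collar $\{0<\ud<s\}$ for some $s\in(0,M)$, then the level set $\{\ud=s\}$ moves at least as fast as prescribed by its mean curvature, and since $\ud(\cdot,t)=s+\dist(\cdot,\{\ud(\cdot,t)\le s\})$ the supersolution inequality propagates to $\{\ud>s\}$ (the same mechanism used at the end of Lemma~\ref{lem:AC-heat}). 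This extension step is the missing idea in your proposal; without it you have only proved the super/subflow property in an $M$-neighborhood of the evolving boundary.
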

We now proceed to outline the main step of the proof of Theorem \ref{th:strip}, which closely follows the proof of Theorem \ref{th:mainexplicit}, highlighting only the key differences.

Note that the Lemma \ref{lem:compsdsdM} and Corollary \ref{lem:balls2} imply, in particular, that there exists a suitable constant $C\ge 1$ such that for every $R<M$
\[ \sd^{M,+}[K^\e\ast (|\cdot|-R)]\le |\cdot | - R + \frac CR \e^2 \quad \text{ in } B_{2R}\cap \ez,  \]
as soon as  $\e/R$ is small enough. This, in turn, yields an estimate on the evolution speed of balls under the scheme assuming $h(\e)\ge \e^2$: one step of the algorithm applied to $B_R$ contains $B_{R-\frac CR h}$, as long as $R<M$, $\e/R$ is small enough and $R-\frac CR h\ge R/2$. By comparison and translation invariance,  one concludes that one step of the algorithm applied to $B_R$, for any $R>0$,  contains\footnote{Where $R\wedge M:=\min\{R, M\}$} $B_{R-\frac C{R\wedge M} h}$, as long as  $\e/R$ is small enough and $R-\frac C{R\wedge M} h\ge R/2$.

Reasoning as in the proof of \cite[Cor.~4.13]{CDGM-crystal} and Section \ref{sec:consistexplicit}, one can show that if for some $k$, $\sd^{k,M,\e}_i\ge R>0$ at $i\in\ez$, then
\[
  \sd^{\ell,M,\e}_i \ge R- \frac{C}{R\wedge M}(\ell - k )h \implies \sd^{\ell,M,\e}_i \ge \sd^{k,M,\e}_i- \frac{C}{R\wedge M}(\ell - k )h
\]
for $\ell \ge k$, $\e/R$ small enough  and as long as the right-hand side is larger than $R/2$. This allows to follow the arguments of Section \ref{sec:consistexplicit} and show convergence (up to subsequences) as $\e\to 0$ of the sequence 
\[  \{ \ud^{M,\e}(t):= \sum_{i\in\ez}\sd^{[ t/h],M,\e}_i \chi_{[0,\e)^N}\} \to \ud^M,\]
locally uniformly on $\R^N$ and for a.e. $t\in (0,+\infty)$. 
Thanks to \eqref{eq:sdMtwo}, the  function $\ud^M$ satisfies
\begin{align*}%\label{eq:distMEt}
  &(\ud^M)^+(\cdot,t) = \dist(\cdot,E^M(t)) \qquad \text{ on } \{0<\ud^M<M\}, \\
  &(\ud^M)^-(\cdot,t) = \dist(\cdot,\comp{(A^M)}(t))\qquad \text{ on }\{-M<\ud^M<0\},
\end{align*}
where $E^M,A^M$  are space-time tubes satisfying the same properties of $E,A$ defined in \eqref{eq:distEt}.
Let us set 
\[ \ud(\cdot,t)=\begin{cases}
  \dist(\cdot,E^M(t)) \qquad &\text{ on } (\R^N\times (0,+\infty))\setminus E^M(t)\\
  -\dist(\cdot,\comp{(A^M)}(t))\qquad &\text{ on } A^M(t)\\
  0 \qquad &\text{ otherwise.}
\end{cases} \]
Then, employing the inequalities between $\sd^{\e}[u]$ and $u$ holding in a strip, one can follow the argument of Section \ref{sec:proof} to show that  $\partial_t \ud\ge \Delta \ud$ on $\{0<\ud<M\}$, and  $\partial_t \ud\le \Delta \ud$ on $\{-M<\ud<0\}$, 
in the distributional (and viscosity) sense. By standard arguments, this in turn implies that the two inequalities are satisfied, respectively, on $(\R^N\times (0,+\infty))\setminus E^M $ and  $A^M $.
The other properties follow reasoning as in Section \ref{sec:proof}, concluding the proof of Theorem \ref{th:strip}.

\subsection{Relating our redistancing operator  with fast solvers for distance functions}
The main drawback of our proposed scheme is the computation cost of the redistancing operator $\sd$, although the previous Section \ref{sec:strips} showed how to reduce computations in practice while maintaining consistency.
Ideally, one would use well-known fast numerical solvers for computing the distance function to a given set in our setting. A natural candidate is the class of fast-marching techniques \cite{RouyTourin,Set,OshSet,ElseyEsedoglu14,Saye14}.  
However, a major challenge lies in the lack of precise error estimates and regularity properties for the distance function computed with these methods, which are crucial in our framework. In particular, it is unclear whether the key inequality in item (2) of Lemma \ref{lem:defsd} holds (up to controlled errors) when using such schemes.

On the one hand, the error can generally be estimated as $O(\sqrt{\e})$, as shown in \cite[Sec.~9]{Mon-notes}. On the other hand, a precise convergence rate for the scheme is only required in Lemma \ref{lem:balls}, where we compare the redistancing operator applied to a distance function that is smooth outside a single point (the signed distance to a ball). In this specific context, higher-order implementations of the fast-marching scheme \cite{SetSch,AhmBakMcLRen} could provide interesting insights.

\subsection{Partitions}
Our redistancing can be used to define a scheme for the evolution of partitions, similarly
to~\cite[Sec.~5.3]{EsedogluRuuthTsai10}
and~\cite{ElseyEsedogluSmereka11, EseOtt}. 
Thanks to the linearity of the
convolution part, one way to implement
this is to track at each iteration the
non-signed (non-negative) distance function $(u^\ell_i)_{i\in\ez}$
to the phase $\ell\in\{1,\dots, L\}$, 
and alternatively (i) diffuse
each $u^\ell$ with the convolution kernel, (ii)
recompute $u^\ell$ as the positive distance to
the negative values of $u^\ell_i - \max\{u^{\ell'}_i: \ell'\neq \ell\}$. This seems to produce results
similar to the schemes in~\cite{EsedogluRuuthTsai10,ElseyEsedogluSmereka11}, yet we have
no convergence proof (when two
phases only are present, this is the same as the scheme
studied in this paper, yet the behavior near points where more than two phases meet would have to be analyzed).

\appendix
\section{An estimate on the decay of the Fourier
  transform of discretized rapidly decaying functions}
 Let $\eta\in C_c^\infty(\R^N)$, $\e \in (0,1)$,
we let $\eta^\e_j = \eta(j)$ for $j\in\e\Z$ (that is, $\eta^\e=\eta_{|\e\Z^N}$), and
we consider the $(1/\e)$-periodic Fourier transform
\[\widehat{\eta^\e}(\xi) = \e^N \sum_{j\in\e\Z^N} \eta(j)e^{-2\pi i j\cdot\xi}.\]
\begin{lemma}\label{lem:uniformSchwartz}
  For any $p>N$, there exists $C$ such that for any $\e\in (0,1]$:
  \begin{equation}\label{uniform}
    \sup_{\xi\in [-\frac{1}{2\e},\frac{1}{2\e}]^N}|\widehat{\eta^\e}-\widehat{\eta}|\le C\e^p
  \end{equation}
  and  for any $\xi\in [-\frac{1}{2\e},\frac{1}{2\e}]^N$ it holds 
  \begin{equation}\label{Schwartz}
       |\widehat{\eta^\e}(\xi)|\le \frac{C}{1+|\xi|^p}.
     \end{equation}
\end{lemma}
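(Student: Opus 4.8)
The plan is to use the Poisson summation formula to rewrite the discrete transform $\widehat{\eta^\e}$ as a periodization of the continuous Fourier transform $\widehat\eta$ over the dilated dual lattice $\frac1\e\Z^N$; this makes the dependence on $\e$ transparent and reduces everything to the Schwartz decay of $\widehat\eta$. Applying Poisson summation to the function $x\mapsto\eta(x)e^{-2\pi i x\cdot\xi}$, which lies in $C_c^\infty$ and hence is Schwartz, on the lattice $\e\Z^N$ gives
\[
\widehat{\eta^\e}(\xi)=\e^N\sum_{j\in\e\Z^N}\eta(j)e^{-2\pi i j\cdot\xi}=\sum_{k\in\Z^N}\widehat\eta\Bigl(\xi+\tfrac k\e\Bigr),
\]
and in particular $\widehat{\eta^\e}(\xi)-\widehat\eta(\xi)=\sum_{k\in\Z^N\setminus\{0\}}\widehat\eta(\xi+\tfrac k\e)$. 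Since $\eta\in C_c^\infty(\R^N)$, for every $q\ge 0$ there is $C_q$ with $|\widehat\eta(\zeta)|\le C_q(1+|\zeta|)^{-q}$ for all $\zeta\in\R^N$.

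The only geometric input is that the shifted lattice stays away from the cube. If $\xi\in[-\frac1{2\e},\frac1{2\e}]^N$ and $k\neq 0$, picking a coordinate $n$ with $|k_n|\ge 1$ yields
\[
\Bigl|\xi+\tfrac k\e\Bigr|\ge\Bigl|\xi_n+\tfrac{k_n}\e\Bigr|\ge\frac{|k_n|}\e-\frac1{2\e}\ge\frac{|k_n|}{2\e}\ge\frac{|k|}{2\sqrt N\,\e}.
\]
Inserting this into the Schwartz bound with $q=p$ (admissible since $p>N$) and using $\e\le 1$,
\[
\bigl|\widehat{\eta^\e}(\xi)-\widehat\eta(\xi)\bigr|\le C_p\,(2\sqrt N)^p\,\e^p\sum_{k\neq 0}|k|^{-p}\le C\e^p,
\]
the series being convergent because $p>N$; this proves \eqref{uniform}.

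For \eqref{Schwartz} I would combine the displayed lower bound with $|\xi|\le\frac{\sqrt N}{2\e}$, which forces $\frac1\e\ge\frac{2|\xi|}{\sqrt N}$ and hence $|\xi+\tfrac k\e|\ge\frac{|\xi|}N$ for $k\neq 0$; thus for such $k$ one has both $1+|\xi+\tfrac k\e|\ge\frac1N(1+|\xi|)$ and $1+|\xi+\tfrac k\e|\ge\frac1{2\sqrt N}(1+|k|)$. Taking $q=p+N+1$ and splitting $(1+|\xi+\tfrac k\e|)^{-q}=(1+|\xi+\tfrac k\e|)^{-p}(1+|\xi+\tfrac k\e|)^{-(N+1)}$ gives, for $k\neq 0$,
\[
\Bigl|\widehat\eta\Bigl(\xi+\tfrac k\e\Bigr)\Bigr|\le C_q\,N^p(2\sqrt N)^{N+1}(1+|\xi|)^{-p}(1+|k|)^{-(N+1)},
\]
while the $k=0$ term is bounded by $C_q(1+|\xi|)^{-q}\le C_q(1+|\xi|)^{-p}$. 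Summing over $k\in\Z^N$, the series $\sum_k(1+|k|)^{-(N+1)}$ being finite, yields $|\widehat{\eta^\e}(\xi)|\le C(1+|\xi|)^{-p}$, and since $(1+t)^p\ge 1+t^p$ for $t\ge 0$, $p\ge 1$, we have $(1+|\xi|)^{-p}\le(1+|\xi|^p)^{-1}$, which is \eqref{Schwartz}.

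The argument is essentially routine once the Poisson summation identity is in place; the only care required is choosing the exponents so that the geometric lower bounds and the convergence of the lattice sums are compatible, and noting that all constants are uniform in $\e$ — which they are, because $\e$ enters solely through the dilation of the dual lattice $\frac1\e\Z^N$. A more hands-on alternative would be to bound the Riemann-sum error $\widehat{\eta^\e}-\widehat\eta$ directly by iterated summation by parts, exploiting the smoothness and compact support of $\eta$, but this is more cumbersome and I would prefer the Poisson-summation route.
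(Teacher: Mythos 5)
Your proposal is correct and follows essentially the same route as the paper: Poisson summation identifies $\widehat{\eta^\e}$ with the $\tfrac1\e$-periodization of $\widehat\eta$, and both proofs then exploit the Schwartz decay of $\widehat\eta$ together with the fact that the nonzero shifted lattice points $\xi+\tfrac k\e$ stay at distance $\gtrsim\frac{|k|}{\e}$ from the fundamental cube. The only (immaterial) differences are in bookkeeping: the paper bounds $|\xi+j|_\infty^p$ from below and compares the lattice sum to an integral, while you split the Schwartz exponent as $q=p+N+1$ to separate the $(1+|\xi|)^{-p}$ decay from a convergent sum in $k$ — a slightly cleaner way to get \eqref{Schwartz} directly.
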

\begin{proof}
   By Poisson's summation formula, $\widehat{\eta^\e}$ is the $(1/\e)$-periodic version of $\widehat{\eta}$,
   that is:
   \[
     \widehat{\eta^\e}(\xi) = \sum_{j\in \frac{1}{\e}\Z^N} \widehat{\eta}(\xi+j).
   \]
   Since $\widehat{\eta}$ is in the Schwartz class,
   for $p\ge 1$, there exists $C  {>0}$ such that for $\xi\in\R^N$,
   \begin{equation} \label{eq:schweta}
     \widehat{\eta}(\xi)\le \frac{C}{1+|\xi|^p}.
   \end{equation}
   Consider $\xi \in [-\frac{1}{2\e},\frac{1}{2\e}]^N$, that is, $|\xi|_\infty\le \frac{1}{2\e}$.
   We write:
   \[
     \widehat{\eta^\e}(\xi) = \widehat{\eta}(\xi)+ \sum_{j\in \frac{1}{\e}\Z^N, j\neq 0} \widehat{\eta}(\xi+j)
   \]
   and
   \[
     e:=\left|\sum_{j\in \frac{1}{\e}\Z^N, j\neq 0} \widehat{\eta}(\xi+j)\right|
     \le \sum_{j\in \frac{1}{\e}\Z^N, j\neq 0} \frac{C}{1+|\xi+j|^p} \le
     \sum_{j\in \frac{1}{\e}\Z^N, j\neq 0} \frac{C}{|\xi+j|_\infty^p} 
   \]
   possibly increasing the constant $C$ in the last term. We have
   \[
     |\xi+j|_\infty^p \ge 2^{1-p}|j|_\infty^p - |\xi|_\infty^p.
   \]
   If  $|j|_\infty\ge \frac{1}{\e}$ then  $|\xi|_\infty\le \frac{1}{2\e}\le \frac{1}{2}|j|_\infty$,
   hence
   \[
     2^{1-p}|j|_\infty^p - |\xi|_\infty^p\ge 2^{-p}|j|_\infty^p \ge \frac{1}{2} (2^{-p}|j|_\infty^p + |\xi|_\infty^p).
     \]
   Hence, assuming $p>N$,
   \begin{equation}\label{eq:app-err1}
   \begin{split}
       e \le \sum_{j\in \frac{1}{\e}\Z^N, j\neq 0} \frac{C}{2^{-p}|j|_\infty^p+|\xi|_\infty^p}
     =  \sum_{j\in \Z^N, j\neq 0} \frac{C2^p\e^p}{|j|_\infty^p+2^p\e^p|\xi|_\infty^p} \\
      \le
     C\sum_{j\ge 1}   \frac{2^p\e^p j^{N-1}}{j^p+2^p\e^p|\xi|_\infty^p}   \le C\e^p
   \end{split}
   \end{equation}
   (increasing again $C$), which shows~\eqref{uniform}.
In addition,
   one can estimate the sum as follows: if $p>N$, for any $a\in (0,1]$,
   \[
     \sum_{j\ge J_p} \frac{j^{N-1}}{a^p+j^p} \le \int_0^\infty \frac{t^{N-1}}{a^p+t^p}dt
   \]
   for some $J_p\ge 1$ (which is such that $t\mapsto \frac{t^{N-1}}{a^p+t^p}$ is
   decreasing for $t\ge J_p$, hence one may use $J_p=(N-1)^{1/p}$). 
   Then,
   \[
     \int_0^\infty \frac{t^{N-1}}{a^p+t^p}dt = \frac{1}{a^p}\int_0^\infty \frac{t^{N-1}}{1+(t/a)^p}dt
      = a^{N-p} \int_0^\infty \frac{s^{N-1}}{1+s^p}ds.
   \]
   It follows:
   \begin{equation*}
     e\le \frac{C}{|\xi|^p} + C 2^p\e^p (2\e|\xi|_\infty)^{N-p} =
     \frac{C}{|\xi|^p} + \e^N\frac{C}{|\xi|^{p-N}}.
   \end{equation*}
     {Since $\xi\in[-\frac1{2\e},\frac1{2\e}]^N$, we deduce $e\le C/|\xi|^p$ (up to increasing $C$), which we combine with  \eqref{eq:app-err1} and \eqref{eq:schweta} to   deduce~\eqref{Schwartz}.}
 \end{proof}

\section*{Acknoledgements}
	The authors would like to thank the anonymous referees for their careful reading and suggestions that helped improve the manuscript.

A.C.~acknowledges the support  of the ``France 2030'' funding ANR-23-PEIA-0004 (``PDE-AI''). D.D.G. is funded by the European Union: the European Research Council (ERC), through StG ``ANGEVA'', project number: 101076411. Views and opinions expressed are however those of the authors only and do not necessarily reflect those of the European Union or the European Research Council. Neither the European Union nor the granting authority can be held responsible for them. M.M. is partially supported by PRIN 2022 Project ``Geometric Evolution Problems and Shape Optimization (GEPSO)'', PNRR Italia Domani, financed by European Union via the Program NextGenerationEU, CUP\_D53D23005820006. He wishes to warmly thank the hospitality of CEREMADE, where part of this research was conducted. D.D.G. and M.M. are  members of the Italian National Research Group GNAMPA (INDAM).

 \printbibliography

\end{document}